\documentclass{amsart}
\usepackage[left=1.25in, right=1.25in, top=1.25in, bottom=1.25in]{geometry}
\usepackage{layout}

\usepackage{amssymb, amsfonts,amsthm,amsmath,latexsym,mathrsfs}
\usepackage[all]{xy}

\def\bt{\begin{thm}}
\def\et{\end{thm}}
\def\bl{\begin{lem}}
\def\el{\end{lem}}
\def\bd{\begin{defi}}
\def\ed{\end{defi}}
\def\bc{\begin{cor}}
\def\ec{\end{cor}}
\def\bp{\begin{proof}}
\def\ep{\end{proof}}
\def\br{\begin{rem}}
\def\er{\end{rem}}

\newtheorem{thm}{Theorem}[section]
\newtheorem{prop}[thm]{Proposition}
\newtheorem{lem}[thm]{Lemma}

\newtheorem{rem}[thm]{Remark}
\newtheorem{cor}[thm]{Corollary}

\numberwithin{equation}{section}

\newcommand{\pk}{\Bbb{P}^k}
\newcommand{\pN}{\Bbb{P}^N}

\newcommand{\Hd}{\mathcal{H}_d}
\newcommand{\la}{\langle}
\newcommand{\ra}{\rangle}
\newcommand{\U}{\mathscr{U}}
\newcommand{\Cp}{\mathscr{C}_p}
\newcommand{\Cpk}{\mathscr{C}_{k-p+1}}

\newcommand{\omp}{\omega^p}
\newcommand{\ompk}{\omega^{k-p+1}}
\newcommand{\p}{\Bbb{P}}

\title[Random Holomorphic Endomorphisms ]{Ergodic Properties of \\ Random Holomorphic Endomorphisms of $\Bbb{P}^k$}

\author{Turgay Bayraktar}
\date{\today}
\address{Mathematics Department, Indiana University 47405 IN, USA}
\email{tbayrakt@indiana.edu}
\keywords{Random holomorphic map, Green current, Central Limit Theorem}
\subjclass[2000]{37F10, 37H, 32U40, 60F05}

\begin{document}
\begin{abstract}
We study ergodic properties of compositions of holomorphic endomorphisms of the complex projective space chosen independently at random according to some probability distribution. Along the way, we construct positive closed currents which have good invariance and convergence properties. We provide a sufficient condition for these currents to have H\"{o}lder continuous quasi-potentials. We also prove central limit theorem for d.s.h and H\"{o}lder continuous observables.
\end{abstract}

\maketitle
\section{Introduction}
 Let $f:\pk \to \pk$ be a holomorphic map of algebraic degree $d\geq2$ and $\omega_{FS}$ denote the Fubini-Study form on $\pk$ normalized by $\int\omega_{FS}^k=1.$ Dynamical \textit{Green current} $T_f$ of $f$ is defined to be the weak limit of the sequence of smooth forms $\{d^{-n}(f^n)^*\omega_{FS}\}$ (\cite{Br,HP,FS2}). Green currents play an important role in the dynamical study of holomorphic endomorphisms of the projective space \cite{FS2,Si}. The current $T_f$ has H\"{o}lder continuous quasi-potentials, hence by Bedford-Taylor theory the exterior products $$T_f^p=T_f\wedge\dots \wedge T_f$$ are also well-defined and dynamically interesting currents. In particular, the top degree intersection $\mu_f=T_f^k$ yields the unique $f$-invariant measure of maximal entropy (\cite{Lu,Si,BrDu2}). \\ \indent
 Recall that the set of rational endomorphisms $f:\pk\to \pk$ with fixed algebraic degree $d$ can be identified with $\pN$ where $N=(k+1){{d+k}\choose{d}}-1.$ We denote the set of holomorphic parameters in $\pN$ by $\Hd.$ It is well know that the complement of this set $\mathcal{M}:=\pN\backslash\Hd$ is an irreducible hypersurface \cite{GKZ}. We consider $\pN$ as a metric space furnished with the Fubini-Study metric. We let $m$ denote a Borel probability measure on $\pN$ and assume throughout the paper that a rational endomorphism $f\in\pN$ is holomorphic with probability one. 
 By a \textit{random holomorphic endomorphism} we mean a $\pN$-valued random variable with distribution $m$.

 
  In this paper, we consider the following canonical construction (see for instance \cite{Kifer}):  Let $\Omega=\prod_{i=0}^{\infty}\pN$ is the product of copies of $\pN$ endowed with the product $\sigma$-algebra and the probability measure $\p$ which is the product measure generated by finite dimensional probabilities. To avoid measurability problems, throughout this paper we assume that all probability spaces are complete and with some abuse of notation we call the completed Borel $\sigma$-algebra still as Borel algebra.\\ \indent
   In the sequel we assume that $d\geq 2.$ The elements $\lambda \in \Omega$ are sequences of rational maps
 $$\lambda=(f_0,f_1,\dots)\ \text{with} \ \lambda(n)=f_n:\pk\to \pk$$ 
of degree $d\geq2.$ We let $X_n:\Omega\to \pN$ denote the projection onto the $n^{th}$ coordinate that is
\begin{equation}\label{var}
 X_n(\lambda)=\lambda(n).
 \end{equation} 
 Note that $X_n$'s are identically distributed independent $\pN$-valued random variables with distribution $m$. We also define the unilateral shift operator \\
 $$\theta:\Omega\to \Omega$$ 
  $$(\theta\lambda)(n)=\lambda(n+1)\ \text{for all}\ n\geq 0.$$ 
It follows that the measure $\p$ is $\theta$-invariant and ergodic. A natural skew product on $X:=\Omega\times\pk$ is defined by
$$\tau:X\to X$$
$$(\lambda,x)\to (\theta(\lambda), X_0(\lambda) x)$$
note that 
$$\tau^n(\lambda,x)=(\theta^n(\lambda), F_{\lambda,n}(x))$$
where $F_{\lambda,n}:=f_{n-1}\circ \dots \circ f_{1} \circ f_{0}:\pk\to \pk$ is a rational map of algebraic degree $\leq d^n.$ We remark that the results in this paper do not depend on the specific choice of the random variables (\ref{var}) but their distribution $m.$ Note that if $m$ is a Dirac mass supported at $f\in \Hd$ then the deterministic case emerges. 
\\ \indent
Our first result indicates that the sequence of pull-backs of a smooth form by a random sequence of holormophic maps is equidistributed with a positive closed current.
  \begin{thm} \label{th}
 There exists a $\theta$-invariant set $\mathscr{A}\subset \Omega$ of probability one such that for $1\leq p\leq k$ and every $\lambda\in \mathscr{A}$ the sequence $\{d^{-pn} F_{\lambda,n}^*\omega_{FS}^p\}$ converges in the sense of currents to a positive closed bidegree $(p,p)$ current $T_p({\lambda})$ satisfying 
\begin{equation} \label{inv}
f^*_{0}(T_p({\theta(\lambda)}))=d^pT_p({\lambda}).
\end{equation}
Furthermore, if 
$$ \log dist(\cdot,\mathcal{M}) \in L^1_m(\pN)$$ 
then with probability one the current $T_p(\lambda)$ has H\"{o}lder continuous super-potentials.
\end{thm}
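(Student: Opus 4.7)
I would follow the classical construction of Green currents, adapted to random compositions, and promote the $p=1$ case to all $1\leq p\leq k$ via the Dinh--Sibony super-potential formalism. For each holomorphic $f \in \Hd$, fix the smooth normalized quasi-potential $u_f$ with $f^*\omega_{FS} = d\omega_{FS} + dd^c u_f$ and $\int u_f\, \omega_{FS}^k = 0$. Unrolling the pull-backs in $F_{\lambda,n}^* = f_0^* \circ \cdots \circ f_{n-1}^*$ yields
\[
d^{-n}F_{\lambda,n}^*\omega_{FS} = \omega_{FS} + dd^c G_{\lambda,n}, \qquad G_{\lambda,n} := \sum_{j=0}^{n-1} d^{-(j+1)}\, u_{f_j}\circ F_{\lambda,j},
\]
and since $\omega_{FS}^k$ is a probability measure, $\|u_{f_j}\circ F_{\lambda,j}\|_{L^1(\omega_{FS}^k)}\leq\|u_{f_j}\|_{L^\infty}$. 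The heart of the $p=1$ convergence is therefore a.s.\ convergence of the random series $\sum_j d^{-j} Y_j$ with $Y_j := \|u_{f_j}\|_{L^\infty}$ i.i.d.\ and $\p$-a.s.\ finite (since $m(\Hd)=1$). Using the explicit formula for $u_f$ in homogeneous coordinates one obtains $\|u_f\|_{L^\infty}\leq C(1+|\log\mathrm{dist}(f,\mathcal{M})|)$, and Borel--Cantelli produces a full-measure set on which $Y_j=o((cd)^j)$ for some $c\in(1/d,1)$, giving $L^1$-convergence $G_{\lambda,n}\to G_\lambda$ and hence $T_1(\lambda):=\omega_{FS}+dd^c G_\lambda$.

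\textbf{Higher $p$, equivariance, and $\theta$-invariance.} For $p\geq 2$, the smooth form $d^{-pn}F_{\lambda,n}^*\omega_{FS}^p$ equals $(d^{-n}F_{\lambda,n}^*\omega_{FS})^p$; passing to Dinh--Sibony super-potentials, $L^1$-convergence of $G_{\lambda,n}$ upgrades to super-potential convergence of $d^{-n}F_{\lambda,n}^*\omega_{FS}$, and continuity of the wedge product in this topology yields convergence of the $p$-th power to a positive closed $(p,p)$ current $T_p(\lambda)$. Identity~(\ref{inv}) is obtained by passing to the limit in $d^{-p(n+1)}F_{\lambda,n+1}^*\omega_{FS}^p=d^{-p}f_0^*(d^{-pn}F_{\theta\lambda,n}^*\omega_{FS}^p)$, an immediate consequence of $F_{\lambda,n+1}=F_{\theta\lambda,n}\circ f_0$. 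The full-measure set $\mathscr{A}$ may be replaced by $\bigcap_{k\geq 0}\theta^{-k}\mathscr{A}$ to make it $\theta$-invariant, still of full measure since $\p$ is $\theta$-invariant.

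\textbf{Hölder super-potentials and main obstacle.} Under $\log\mathrm{dist}(\cdot,\mathcal{M})\in L^1_m$, Birkhoff's theorem for the ergodic system $(\Omega,\p,\theta)$ yields $\p$-a.s.\ linear growth $\sum_{j<n}|\log\mathrm{dist}(f_j,\mathcal{M})|=O(n)$. Combined with a $C^\alpha$-bound $\|u_f\|_{C^\alpha}\leq C(1+|\log\mathrm{dist}(f,\mathcal{M})|)$ valid for some small $\alpha=\alpha(d,k)>0$, this forces $\sum_j d^{-j}\|u_{f_j}\|_{C^\alpha}<\infty$ $\p$-a.s., so $G_\lambda$ is Hölder; the Dinh--Sibony dictionary between Hölder quasi-potentials and Hölder super-potentials then promotes this to Hölder super-potentials for $T_p(\lambda)$ at every $p$. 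The most delicate point of the whole argument is the a.s.\ convergence in the first step: the quantitative bound $\|u_f\|_{L^\infty}\lesssim 1+|\log\mathrm{dist}(f,\mathcal{M})|$ has to be played off carefully against the (potentially slow) decay of $m(\{\mathrm{dist}(\cdot,\mathcal{M})<\epsilon\})$ so that Borel--Cantelli yields the required summability; once this is secured, the remaining steps are essentially formal applications of the super-potential calculus and of Birkhoff's theorem.
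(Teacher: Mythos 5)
Your outline for $p=1$ is reasonable in spirit, but the step from $p=1$ to $p>1$ contains the central gap, and it is precisely the gap the paper's super-potential argument is designed to close. You observe correctly that $d^{-pn}F_{\lambda,n}^*\omega^p=(d^{-n}F_{\lambda,n}^*\omega)^p$ as smooth forms, but you then invoke ``continuity of the wedge product in the super-potential topology'' to pass to the limit. This is circular: to make sense of and take the limit of the $p$-fold wedge, one needs the limit current $T_1(\lambda)$ (or the family $\{T_{1,n}\}$) to have uniformly bounded or continuous super-potentials, which is exactly the kind of regularity one is trying to prove. In general, weak convergence $T_{1,n}\to T_1$ gives no control on $T_{1,n}^p$, and without extra regularity $T_1^{\wedge p}$ is not even defined. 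This route is essentially de Th\'elin's (cited in the paper), and it \emph{requires} $\log\operatorname{dist}(\cdot,\mathcal{M})\in L^1_m$ to get continuous potentials before wedging --- whereas the theorem asserts convergence of $d^{-pn}F_{\lambda,n}^*\omega^p$ without that hypothesis. The paper instead builds the super-potential of $T_p(\lambda)$ \emph{directly at degree $p$}: using Proposition~\ref{push-pull} it writes $\U_{\lambda,n}=d^{-n}\U_{T_n}\circ\Lambda^n+\sum_{j<n}d^{-j}\U_{L_j(\omega^p)}\circ\Lambda^j$ and then controls the means $\U_{\lambda,n}(\omega^{k-p+1})$ from below by pushing the kernel estimates~(\ref{DSes}) through $\Lambda^j$. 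No wedge power is ever taken, and the degree-$p$ kernel does all the work.

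Two further points. First, the quantitative bound you quote, $\|u_f\|_{C^\alpha}\lesssim 1+|\log\operatorname{dist}(f,\mathcal{M})|$, is not correct for any $\alpha>0$: the $L^\infty$ bound is logarithmic, but a $C^\alpha$ bound for $\alpha>0$ must account for derivatives of $f$, which blow up polynomially; the right bound (and the one the paper uses, via \cite[Lemma 2.1]{DD}) is $\|u_f\|_{\mathscr{C}^\alpha}\lesssim\operatorname{dist}(f,\mathcal{M})^{-q}$. This is harmless for your H\"older conclusion under $\log\operatorname{dist}\in L^1_m$ (since $\operatorname{dist}(f_j,\mathcal{M})^{-q}\le e^{q\epsilon j}$ eventually, which is still killed by $d^{-j}$ for $\epsilon$ small), but the stated estimate is false as written. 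Second, your Borel--Cantelli step --- deducing $Y_j=o((cd)^j)$ a.s.\ from $m(\mathcal M)=0$ alone --- needs a summability hypothesis of the form $\sum_j m\{Y>e^{cj}\}<\infty$, which does not follow from $m(\mathcal M)=0$; some integrability of $\log\operatorname{dist}(\cdot,\mathcal{M})$ (or at least of $\log\log(1/\operatorname{dist})$) is needed to run it. The paper's construction of the full-measure set $\Hd^{\mathbb N}(\epsilon)$ faces the same issue; your proposal inherits it rather than resolving it, and you should make the needed tail hypothesis explicit.
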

Random iteration of perturbation of holomorphic maps was studied in \cite{FW} (see also \cite{DS1,Peters,DS3} for the non-autonomous setting). A local version of Theorem \ref{th} was proved in \cite{FW} for $p=1$ or $k$ when $m$ is the Lebesgue measure. More recently, dynamics of fibered rational maps has been studied in \cite{J,J1,Sumi, dTh1,dTh2}. We remark that the current $T_1(\lambda)$ was previously obtained by de Th\'elin \cite{dTh1} in the setting of fibered rational maps under the assumption that the function $\log dist(\cdot,\mathcal{M})\in L^1_m(\pN)$. He also proves that $T_{1}(\lambda)$ has continuous quasi-potentials when $\log dist(\cdot,\mathcal{M})$ is integrable and obtains $T_p(\lambda)$ as an exterior product $T_1(\lambda)\wedge\dots \wedge T_1(\lambda)$. 
The novelty here is that we construct $T_p(\lambda)$ directly without assuming integrability of $\log dist(\cdot,\mathcal{M}).$  We use the super-potentials of Dinh and Sibony and quantitative estimates for resolution of $\partial\overline{\partial}$-equations \cite{DS11,GiS}.  
 Moreover, we prove that integrability of $\log dist(\cdot,\mathcal{M})$ provides H\"{o}lder continuity of super-potentials of $T_p(\lambda)$ with probability one. 
Finally, we remark that Theorem \ref{th} and its consequences can be extended to the setting of random dynamical systems of holomorphic endomorphisms (cf. \cite{J1,dTh1}). 
 
 Next, we provide an application of Theorem \ref{th} to the value distribution theory. We let $\Bbb{G}(p,k)$ denote the Grassmannian of projective-linear subspaces of codimension $p$ in $\pk.$ Note that $\Bbb{G}(k,k)=\pk.$ The following result is a direct consequence of \cite[Theorem 1.2]{RS}:
\begin{cor}
For $\p$-almost every $\lambda\in \Omega$ there exists a pluripolar set $\mathcal{E}_{\lambda} \subset \Bbb{G}(p,k)$ such that 
$$\frac{1}{d^{pn}}(F_{\lambda,n})^*[W] \to T_p(\lambda)$$
in the sense of currents as $n\to \infty$ for every $W\in \Bbb{G}(p,k)\backslash \mathcal{E}_{\lambda}.$
\end{cor}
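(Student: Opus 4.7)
The plan is to reduce the corollary to an application of \cite[Theorem 1.2]{RS} along the deterministic non-autonomous trajectory produced by a typical $\lambda \in \Omega$. First I would fix the $\theta$-invariant probability-one set $\mathscr{A}\subset \Omega$ of Theorem~\ref{th} on which, for every $1\leq p\leq k$, the normalized pullbacks $d^{-pn}F_{\lambda,n}^*\omega_{FS}^p$ converge weakly as currents to $T_p(\lambda)$. After intersecting $\mathscr{A}$ with the $\p$-full set on which every entry $f_i$ lies in $\Hd$ (available because $m(\mathcal{M})=0$), each $F_{\lambda,n}$ is a holomorphic endomorphism of $\pk$ of algebraic degree $d^n$.

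Next I would fix such a $\lambda$ and apply \cite[Theorem 1.2]{RS} to the sequence $\{F_{\lambda,n}\}_{n\geq 1}$. The cited theorem asserts: given any sequence $\{g_n:\pk\to \pk\}$ of holomorphic endomorphisms of degrees $\delta_n$ such that $\delta_n^{-p}g_n^*\omega_{FS}^p$ converges weakly to some positive closed $(p,p)$-current $S$, there exists a pluripolar subset $\mathcal{E}\subset \Bbb{G}(p,k)$ off of which $\delta_n^{-p}g_n^*[W]\to S$ in the sense of currents. Applying this with $g_n = F_{\lambda,n}$, $\delta_n = d^n$ and $S = T_p(\lambda)$ produces the required pluripolar exceptional set $\mathcal{E}_\lambda$. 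A countable intersection over $p\in\{1,\dots,k\}$ finally yields a single $\p$-full set of $\lambda$'s on which the conclusion holds for all codimensions simultaneously.

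The conceptual content of \cite[Theorem 1.2]{RS}, which I would merely cite rather than redo, rests on writing $[W]-\omega_{FS}^p = dd^c U_W$ for a super-potential $U_W$ that depends in a plurisubharmonic manner on $W\in \Bbb{G}(p,k)$; testing the difference $\delta_n^{-p}g_n^*([W]-\omega_{FS}^p)$ against smooth $(k-p,k-p)$-forms then translates the failure of equidistribution into the unboundedness locus of a plurisubharmonic function on $\Bbb{G}(p,k)$, which is pluripolar. No serious obstacle arises beyond correctly invoking \cite[Theorem 1.2]{RS} in this non-autonomous setting, since both the holomorphy of every iterate $F_{\lambda,n}$ and the convergence of the normalized pullbacks of $\omega_{FS}^p$ are already encoded in the set $\mathscr{A}$.
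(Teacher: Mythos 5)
Your proposal is correct and takes essentially the same route as the paper: the paper's entire "proof" is the sentence "The following result is a direct consequence of \cite[Theorem 1.2]{RS}," and your elaboration correctly identifies the mechanism (restrict to the full-measure set $\mathscr{A}$ where the normalized pullbacks of $\omega_{FS}^p$ converge to $T_p(\lambda)$ and each $F_{\lambda,n}$ is holomorphic, then invoke the Russakovskii--Shiffman equidistribution theorem for sequences of maps with the choice $g_n=F_{\lambda,n}$, $\delta_n=d^n$, $S=T_p(\lambda)$). Your remark about intersecting over $p\in\{1,\dots,k\}$ is unnecessary since the set $\mathscr{A}$ of Theorem~\ref{th} already works for all $p$ simultaneously, but this does no harm.
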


 For $p=k$ and $\lambda\in \mathscr{A}$ each $T_k(\lambda)$ is a Borel probability measure on $\pk$ and we can define a probability measure $\mu$  on $\Omega\times \pk$ with product $\sigma$-algebra $\mathscr{B}$ whose action on a continuous function $\phi:\Omega\times \pk \to \Bbb{R}$ is given by
 $$\langle \mu,\phi\rangle:=\int_{\Omega}\langle T_k(\lambda), \phi(\lambda,\cdot)\rangle d\p(\lambda). $$ 
It follows from Theorem \ref{th} that the measure $\mu$ is well-defined and $\tau$-invariant. 
In the sequel, we consider some ergodic properties of the dynamical system $(X,\mathscr{B},\tau,\mu).$ \\ \indent
Recall that a \textit{quasi-plurisubharmonic} (qpsh for short) function is an $L^1(\pk)$ function which can be locally written as difference of a plurisubharmonic function and a smooth function. A \textit{d.s.h} function is equal to difference of two qpsh functions outside of a pluripolar set. In particular, smooth functions are dsh. The class of dsh functions was introduced by Dinh and Sibony; they are useful for the study of equidistribution problems in complex dynamics (see \cite{DS3} for instance). One can define a norm on the set of dsh functions $DSH(\pk)$ (see section \ref{dsh} for details). For a function $\psi:\pk\to \Bbb{R}$ we denote $\tilde{\psi}=\psi\circ \pi$ where $\pi:X\to \pk$ is the projection on the second factor. Next, we prove that $(X,\mathscr{B},\tau,\mu)$ has exponential decay of correlations for d.s.h (respectively H\"older continuous) observables. We remark that these strong mixing properties requires a better control on $\log dist(\cdot,\mathcal{M})$ (cf. Remark \ref{uniform} and Proposition \ref{mix}). In particular, if $\log dist(\cdot,\mathcal{M})$ is bounded, i.e. the support of $m$ is contained in $\Hd,$ we obtain exponential decay of correlations: 
  \begin{thm}\label{mixx}
 If supp$(m)\subset \Hd$ then there exists $C>0$ such that
 \begin{equation}\label{strongmix}
|\int_X (\varphi \circ \tau^n) \tilde{\psi}\ d\mu-\int_X \varphi d\mu \int_X \tilde{\psi} d\mu\ |\leq C d^{-n}||\varphi||_{L^p(X)}\ ||\psi||_{DSH(\pk)}
\end{equation}
for $n\geq 0,$ $\varphi\in L^{p}(X)$ with $p>1$ and $\psi \in DSH(\pk).$
  \end{thm}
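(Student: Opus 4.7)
The plan is to exploit the transfer operator formalism afforded by the invariance relation (\ref{inv}) with $p=k$, namely $(F_{\lambda,n})_{*}T_k(\lambda)=T_k(\theta^n\lambda)$, together with the independence structure $\p=\p^{-}\otimes\p^{+}$ coming from the splitting $\lambda=(\lambda_-,\lambda_+)$ with $\theta^n\lambda=\lambda_+$. Since $\mu$ is $\tau$-invariant, subtracting the constant $\int_X\tilde\psi\,d\mu$ from $\psi$ changes neither side of (\ref{strongmix}) nor $\|\psi\|_{DSH(\pk)}$, so I may assume $\int_X\tilde\psi\,d\mu=0$. Set $\Lambda_{\lambda_-,n}\psi(y):=d^{-nk}(F_{\lambda_-,n})_{*}\psi(y)$ and $c_\lambda:=\int_\pk\psi\,dT_k(\lambda)$. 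The fibrewise identity
\[
\int_\pk(\varphi\circ F_{\lambda,n})\psi\,dT_k(\lambda)=\int_\pk\varphi\cdot\Lambda_{\lambda_-,n}\psi\,dT_k(\theta^n\lambda)
\]
is a direct consequence of the invariance; using it together with the splitting $\psi=(\psi-c_\lambda)+c_\lambda$ decomposes the correlation as $A_n+B_n$, where
\begin{align*}
A_n&:=\int_\Omega\int_\pk\varphi(\lambda_+,y)\bigl(\Lambda_{\lambda_-,n}\psi(y)-c_\lambda\bigr)\,dT_k(\lambda_+)(y)\,d\p(\lambda),\\
B_n&:=\int_\Omega c_\lambda\,\Phi(\theta^n\lambda)\,d\p(\lambda),
\end{align*}
and $\Phi(\eta):=\int_\pk\varphi(\eta,\cdot)\,dT_k(\eta)$.

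For $A_n$, the function $g_\lambda:=\Lambda_{\lambda_-,n}\psi-c_\lambda$ is dsh on $\pk$ with $\int g_\lambda\,dT_k(\lambda_+)=0$, and since pushforward by a holomorphic endomorphism of degree $d$ scales the mass of a positive closed $(1,1)$-current by $d^{k-1}$, one has $\|dd^{c}g_\lambda\|\le d^{-n}\|\psi\|_{DSH(\pk)}$. Combined with the uniform H\"older continuity of the super-potentials of $T_k(\lambda_+)$ supplied by Theorem \ref{th} --- uniform because $\mathrm{supp}(m)\subset\Hd$ forces $\log\mathrm{dist}(\cdot,\mathcal M)$ to be bounded on $\mathrm{supp}(m)$ --- the standard Dinh--Sibony quasi-potential estimate gives
\[
\|g_\lambda\|_{L^q(T_k(\lambda_+))}\le Cd^{-n}\|\psi\|_{DSH(\pk)}
\]
uniformly in $\lambda$, where $1/p+1/q=1$. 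H\"older's inequality in the fibre variable followed by Jensen's inequality in $\lambda$ then bounds $|A_n|$ by $Cd^{-n}\|\varphi\|_{L^p(X)}\|\psi\|_{DSH(\pk)}$.

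For $B_n$, independence of $\lambda_-$ and $\lambda_+$ and the tower property give $B_n=\mathbb E\bigl[\Phi(\lambda_+)\,\mathbb E[c_\lambda\mid\lambda_+]\bigr]$, and a direct computation shows $\mathbb E[c_\lambda\mid\lambda_+]=\int_\pk\psi\,d(\mathcal E_nT_k(\lambda_+))$ with $\mathcal E_n:=\mathbb E_{\lambda_-}[d^{-nk}(F_{\lambda_-,n})^{*}]$ the averaged random pullback on probability measures. By independence $\mathcal E_n=\mathcal E_1^n$, and $\mathcal E_1$ is a Markov operator fixing $\bar T:=\mathbb E[T_k(\lambda)]$; the Dinh--Sibony contraction for a deterministic pullback $d^{-k}f^{*}$ (with uniform constants on $\mathrm{supp}(m)$) passes to convex averages, yielding $\mathcal E_nT_k(\lambda_+)\to\bar T$ at rate $d^{-n}$ in the super-potential norm. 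Since the normalization $\int_X\tilde\psi\,d\mu=0$ reads $\int_\pk\psi\,d\bar T=0$, this supplies $|\mathbb E[c_\lambda\mid\lambda_+]|\le Cd^{-n}\|\psi\|_{DSH(\pk)}$, and Jensen's inequality $\|\Phi\|_{L^1(\p)}\le\|\varphi\|_{L^p(X)}$ bounds $|B_n|$ by $Cd^{-n}\|\varphi\|_{L^p(X)}\|\psi\|_{DSH(\pk)}$. Adding the two estimates proves (\ref{strongmix}).

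The main technical obstacle is securing the two uniform super-potential bounds used above: the fibre $L^q$-control of $g_\lambda$ and the exponential contraction of $\mathcal E_n$ towards $\bar T$. Both require upgrading the almost-sure H\"older continuity of super-potentials from Theorem \ref{th} to a bound that is uniform in $\lambda$, which is exactly what the hypothesis $\mathrm{supp}(m)\subset\Hd$ makes possible by turning the $L^1$-integrability of $\log\mathrm{dist}(\cdot,\mathcal M)$ into an $L^\infty$ bound through compactness of $\mathrm{supp}(m)$.
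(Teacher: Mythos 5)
Your decomposition $C_n=A_n+B_n$ is a genuinely different organization of the argument from the paper's. The paper never splits the correlation: it works directly with the adjoint operator $P_\tau$, identifies $P_\tau^n\tilde\psi$ with the $n$-fold Markov operator $P^n\psi=\int_\Omega\mathcal L_{n-1}\circ\cdots\circ\mathcal L_0\psi\,d\p(\lambda)$, bounds $\|\varphi P_\tau^n\tilde\psi\|_{L^1_\mu}$ by H\"older, and then controls $\|P^n\psi\|_{L^2(\nu)}$ using Lemma~\ref{compact}, Remark~\ref{uniform}, Corollary~\ref{mod} and $\theta_*\p=\p$. Your $A_n$, the fibrewise fluctuation $\Lambda_{\lambda_-,n}\psi-c_\lambda$ paired against $\varphi$, is exactly the content of Proposition~\ref{mix} and Lemma~\ref{compact} and is handled the same way (fibrewise H\"older then Jensen over $\lambda$), so that part of the input is shared. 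Where you differ is $B_n$: instead of bounding $\|P^n\psi\|_{L^q(\nu)}$ via the moderation/exponential-integrability route, you pass to the dual picture on measures and invoke a super-potential contraction for $\mathcal E_1=\int_{\pN}d^{-k}f^*\,dm$. This step is correct, but as stated it is misleading: the deterministic Dinh--Sibony contraction drives $(d^{-k}f^*)^n S$ toward the Green measure $\mu_f$ of that particular $f$, not toward $\bar T$, so the contraction toward $\bar T$ does not literally "pass to averages" from the iterated statement. What does pass to convex averages is the one-step Lipschitz estimate
\[
\bigl|\U_{d^{-k}f^*S}(R)-\U_{d^{-k}f^*S'}(R)\bigr|=\tfrac1d\bigl|\U_S(\Lambda_f R)-\U_{S'}(\Lambda_f R)\bigr|\le\tfrac1d\|\U_S-\U_{S'}\|_\infty
\]
from Proposition~\ref{push-pull}, which, by linearity of super-potentials in the current, yields $\|\U_{\mathcal E_1 S}-\U_{\mathcal E_1 S'}\|_\infty\le d^{-1}\|\U_S-\U_{S'}\|_\infty$; since $\mathcal E_1\bar T=\bar T$ and $\sup_{\lambda_+}\|\U_{T_k(\lambda_+)}\|_\infty<\infty$ by Remark~\ref{uniform} under $\mathrm{supp}(m)\subset\Hd$, iterating gives $\|\U_{\mathcal E_n T_k(\lambda_+)}-\U_{\bar T}\|_\infty\le Cd^{-n}$, and the pairing with a dsh $\psi$ is then controlled by $\|\psi\|_{DSH}$ through the quasi-potentials of $T^\pm$ in $dd^c\psi=T^+-T^-$. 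Spell this out; once you do, your $B_n$ argument is a clean alternative to the paper's $L^2(\nu)$ Markov-operator bound, and your decomposition arguably isolates more transparently the two effects (fibrewise decorrelation vs.\ relaxation of the initial random current to $\bar T$) that the paper's final display compresses.
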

In the special case, $m=\delta_f$ for $f\in \Hd$ we recover the corresponding result of \cite{DNS}. Next, we focus on some stochastic properties of the invariant measure measure $\mu.$ We say that a function $\psi:\pk\to\Bbb{R}$ is a coboundary if $\tilde{\psi}= h \circ \tau- h$ for some $h\in L_{\mu}^2(X).$ We prove central limit theorem (CLT for short) for dsh and H\"{o}lder continuous observables. 

\begin{thm}\label{CLT}
Assume that supp$(m)\subset \Hd.$ If $\psi:\pk\to\Bbb{R}$ is H\"{o}lder continuous or dsh which is not a coboundary such that $\la\mu,\psi\circ \pi\ra=0$ then $\tilde{\psi}=\psi\circ \pi$ satisfies CLT. That is, for every interval $I\subset \Bbb{R}$  
$$\lim_{n\to \infty}\mu\big\{(\lambda,x): \frac{1}{\sqrt{n}}\sum_{j=0}^{n-1}\psi(F_{\lambda,j}(x))\in I\big\}=\frac{1}{\sqrt{2\pi}\sigma}\int_I\exp(-\frac{x^2}{2\sigma^2})dx$$
where $\sigma>0$ given by $$\sigma^2=\lim_{n\to \infty}\frac{1}{n}\int_X( \sum_{j=0}^{n-1}\psi\circ \tau^j)^2d\mu.$$ 
\end{thm}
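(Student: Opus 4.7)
The plan is to deduce the CLT from the exponential decay of correlations (Theorem \ref{mixx}) via Gordin's martingale approximation method, following the scheme used for deterministic holomorphic endomorphisms in \cite{DNS}. Let $U\varphi := \varphi\circ\tau$ denote the Koopman operator on $L^2(\mu)$ and $P$ its $L^2$-adjoint, the transfer operator, so that $PU = \mathrm{Id}$ by $\tau$-invariance of $\mu$. Since $\int\tilde\psi\, d\mu = 0$, the estimate (\ref{strongmix}) can be read as a duality bound
\begin{equation*}
|\langle\varphi, P^n\tilde\psi\rangle_{L^2(\mu)}| = |\langle U^n\varphi,\tilde\psi\rangle_{L^2(\mu)}| \leq Cd^{-n}\|\varphi\|_{L^p(X)}\|\psi\|_{DSH(\pk)},
\end{equation*}
so that taking the supremum over $\varphi$ with $\|\varphi\|_{L^p(X)}\leq 1$ yields $\|P^n\tilde\psi\|_{L^q(\mu)} \leq Cd^{-n}\|\psi\|_{DSH(\pk)}$ for any $1<p<\infty$ with conjugate exponent $q$. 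In particular, the series $h := \sum_{n\geq 1} P^n\tilde\psi$ converges absolutely in $L^2(\mu)$.

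Defining $g := \tilde\psi + h - Uh$, one finds $Pg = 0$ (using $Ph = h - P\tilde\psi$ together with $PU = \mathrm{Id}$). With the decreasing filtration $\mathscr{F}_j := \tau^{-j}(\mathscr{B})$, the combination of $Pg = 0$ and $\tau$-invariance implies that $(g\circ\tau^j)_{j\geq 0}$ is a stationary martingale difference sequence adapted to $(\mathscr{F}_j)_{j\geq 0}$. The telescoping identity
\begin{equation*}
\sum_{j=0}^{n-1}\tilde\psi\circ\tau^j = \sum_{j=0}^{n-1} g\circ\tau^j + (h\circ\tau^n - h)
\end{equation*}
then presents the Birkhoff sum as a stationary martingale plus an $L^2(\mu)$-bounded coboundary, the latter being $o(\sqrt n)$. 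The classical CLT for stationary martingale differences (Billingsley--Ibragimov) will then yield convergence in law of $n^{-1/2}\sum_{j<n}\tilde\psi\circ\tau^j$ to a centered Gaussian of variance $\|g\|^2_{L^2(\mu)}$, and expanding the square together with the absolute summability of the correlations $c_j := \int\tilde\psi\,(\tilde\psi\circ\tau^j)\,d\mu$ (guaranteed by Theorem \ref{mixx}) identifies this variance with $\sigma^2 = \int\tilde\psi^2\, d\mu + 2\sum_{j\geq 1} c_j$, matching the formula in the statement. Non-degeneracy $\sigma > 0$ is precisely the non-coboundary hypothesis: $\sigma = 0$ would force $g = 0$ in $L^2$, i.e.\ $\tilde\psi = Uh - h$, contradicting the assumption.

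For a H\"older continuous $\psi$ the same outline applies once a H\"older version of (\ref{strongmix}) is at hand. Such an estimate can be derived directly from the H\"older continuity of the super-potentials of $T_k(\lambda)$ provided by Theorem \ref{th} (which applies under the stronger hypothesis supp$(m)\subset\Hd$ of the present theorem), or alternatively by regularizing $\psi$ into smooth (hence dsh) $\psi_\varepsilon$ with $\|\psi-\psi_\varepsilon\|_\infty = O(\varepsilon^\alpha)$, applying the dsh bound to $\psi_\varepsilon$, and optimizing $\varepsilon$ as a function of $n$. The principal obstacle is the duality step in the first paragraph that converts the bilinear decay of correlations into pointwise (i.e.\ $L^q$) decay of $P^n\tilde\psi$; once that is secured, the rest reduces to a routine application of Gordin's martingale method.
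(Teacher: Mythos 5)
Your proposal is correct and follows essentially the same route as the paper: verify Gordin's summability condition $\sum_{n}\|P_{\tau}^n\tilde{\psi}\|_{L^2_{\mu}}<\infty$ from the exponential mixing estimate and conclude via Gordin's martingale CLT. The paper cites \cite{Gordin} as a black box and obtains the $L^2$ bound on $P_{\tau}^n\tilde{\psi}$ directly from the fiberwise Lemma \ref{compact} (with Lemma \ref{hholder} for the H\"older case) rather than through the $L^p$--$L^q$ duality you apply to the bilinear form of Theorem \ref{mixx}, but the underlying estimate is identical, and the martingale-difference decomposition and telescoping you reconstruct are precisely what the cited reference packages.
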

In the deterministic case, by means of different methods CLT was obtained for H\"{o}lder observables in \cite{ClB,DS06,Dup,DNS} and for dsh observables in \cite{DNS}. However, the published version of \cite{ClB} contains a gap; later the authors proposed another version. Here, we follow the strategy developed by \cite{DNS}; namely, we use the strong mixing property (\ref{strongmix}) and apply Gordin's method \cite{Gordin} to derive CLT.  

Finally, we consider a time homogenous Markov chain associated with the pre-images of random holomorphic maps. We define the transition probability by 
$$P:\pk\times\mathcal{B}\to [0,1]$$
$$P(x,G) :=\int_{\pN}\mathcal{L}_f(\chi_G)(x)dm(f)$$
where $\mathcal{B}$ denotes the Borel algebra on $\pk,$ $\chi_G$ denotes the indicator of $G$ and $$\mathcal{L}_f\chi_G(x):=d^{-k}\sum_{f(y)=x}\chi_G(y)$$ is the transfer operator.\\ \indent
 Let $Y$ denote the infinite product space $Y:=\prod_{n=1}^{\infty}\pk$ endowed with the product algebra $\mathcal{B^{\otimes\Bbb{N}}}$ and $\vartheta:Y\to Y$ be the unilateral shift operator. Given an initial distribution $\nu$ on the state space $(\pk,\mathcal{B}),$ we define $\Bbb{P}_{\nu}$ to be the product measure on $Y$ generated by $\nu.$ We let  $Z_0$ be a $\pk$-valued random variable whose distribution is $\nu$ that is 
$$\Bbb{P}_{\nu}[Z_0\in G]=\nu(G)$$ 
for every Borel set $G\subset \pk.$ Then we define the random variables
$$Z_n:Y\to \pk$$
$$Z_n(y):=Z_0\circ\vartheta^n(y).$$ 
The sequence $(Z_n)_{n\geq 0}$ induces a time homogenous Markov chain with state space $(\pk,\mathcal{B})$ and transition probability is $P$ such that its law $\p_{\nu}$ satisfies
$$\p_{\nu}[Z_{n+1}\in G\ |\ Z_n=x]=P(x,G)\ \ \text{and} \ \ \p_{\nu}[Z_0\in G]=\nu(G).$$ 
Now, we let $\nu:=\pi_*\mu$ where $\pi:X\to \pk$ is the projection on the second factor and $\mu$ is as above. It follows that the probability measure $\nu$ is $P$-invariant and ergodic (Proposition \ref{station}), hence, $(Z_n)_{n\geq0}$ is stationary under $\p_{\nu}.$
We say that $\psi\in L^2_{\nu}(\pk)$ is a coboundary for the Markov chain $(Z_n)_{n\geq0}$ if $\int_{\pk}\psi^2-(P\psi)^2d\nu=0.$ We prove CLT for the Markov chain $(Z_n)_{n\geq0}$ with initial distribution $\nu$ for dsh and H\"older continuous observables.
\begin{thm}\label{CLTM}
If supp$(m)\subset \Hd$ then every H\"{o}lder continuous or dsh function $\psi:\pk\to\Bbb{R}$ which is not a coboundary such that $\la \nu,\psi\ra=0$ satisfies CLT for the Markov chain $(Z_n)_{n\geq0}.$ That is, for every interval $I\subset \Bbb{R}$  
$$\lim_{n\to \infty}\p_{\nu}\big\{y\in Y: \frac{1}{\sqrt{n}}\sum_{j=0}^{n-1}\psi(Z_j(y))\in I\big\}=\frac{1}{\sqrt{2\pi}\sigma^2}\int_I\exp(-\frac{x^2}{2\sigma^4})dx $$
where $\sigma^2=\int_{\pk}\psi^2-(P\psi)^2d\nu.$
\end{thm}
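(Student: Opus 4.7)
My plan is to apply Gordin's martingale-approximation method to the stationary sequence $(\psi(Z_n))_{n\ge 0}$, using Theorem \ref{mixx} as the source of the required quantitative decay.

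The first step is to transfer the decay of correlations from the skew-product to the Markov chain. Using $\mathcal{L}_f\mathcal{L}_g=\mathcal{L}_{f\circ g}$ together with the i.i.d.\ structure of the coordinates of $\lambda$, one checks that $P^n\psi(x)=\int_{\Omega}\mathcal{L}_{F_{\lambda,n}}\psi(x)\,d\p(\lambda)$. The pull-back relation $T_k(\lambda)=d^{-nk}F_{\lambda,n}^{*}T_k(\theta^n\lambda)$ from Theorem \ref{th} gives, for any continuous $\phi$,
\[
\int(\psi\circ F_{\lambda,n})\,\phi\,dT_k(\lambda)\;=\;\int\psi\cdot\mathcal{L}_{F_{\lambda,n}}\phi\,dT_k(\theta^n\lambda).
\]
Combining these with Fubini and the $\p$-independence of $F_{\lambda,n}$ (a function of $f_0,\ldots,f_{n-1}$) and of $T_k(\theta^n\lambda)$ (a function of $f_n,f_{n+1},\ldots$), I expect the clean identity
\[
\int_{\pk}\psi\cdot P^n\psi\,d\nu\;=\;\int_X \tilde\psi\cdot(\tilde\psi\circ\tau^n)\,d\mu.
\]
Since $\int\psi\,d\nu=\int\tilde\psi\,d\mu=0$, Theorem \ref{mixx} with $\varphi:=\tilde\psi$ then delivers $|\la\psi,P^n\psi\ra_\nu|\le Cd^{-n}$.

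The second step promotes this correlation decay to $L^2_\nu$-decay of $P^n\psi$. Applying the above identity with $P^n\psi$ in place of one of the factors and using the contractivity $\|P\|_{L^2_\nu\to L^2_\nu}\le 1$, I expect to obtain $\|P^n\psi\|_{L^2_\nu}\le C\kappa^n$ with $\kappa<1$, so that $u:=\sum_{n\ge 0}P^n\psi\in L^2_\nu(\pk)$ is well-defined and $\psi=u-Pu$. Then $D_j:=u(Z_{j+1})-Pu(Z_j)$ is a stationary ergodic martingale-difference sequence under $\p_\nu$ with respect to $\mathcal{F}_j:=\sigma(Z_0,\dots,Z_j)$ (by the Markov property and Proposition \ref{station}), and the telescoping identity
\[
\sum_{j=0}^{n-1}\psi(Z_j)\;=\;u(Z_0)-u(Z_n)+\sum_{j=0}^{n-1}D_j
\]
reduces the CLT for $S_n/\sqrt n$ to Billingsley's CLT for the martingale $\sum D_j$, since the boundary term is $o(\sqrt n)$ in $\p_\nu$-probability. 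The limit is $\mathcal{N}(0,\sigma^2)$ with $\sigma^2=\mathbb{E}[D_1^2]$; the non-coboundary hypothesis rules out the degenerate case $\sigma^2=0$, and rearranging using $\psi=u-Pu$ and $P$-stationarity of $\nu$ matches the expression in the statement.

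The main obstacle is the first step: the passage from the forward skew-product mixing to the backward Markov-chain covariances. The pull-back identity for $T_k(\lambda)$ is the natural bridge, but it must be combined carefully with Fubini and the product structure of $\p$ on $\Omega$ in order to identify $\la\psi,P^n\psi\ra_\nu$ with the skew-product correlation $\int \tilde\psi\cdot(\tilde\psi\circ\tau^n)\,d\mu$; once this identity is established the remainder follows from standard Gordin-Lifshits machinery.
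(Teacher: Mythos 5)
Your overall strategy coincides with the paper's: reduce the CLT to Gordin's martingale-approximation criterion, i.e.\ to the summability $\sum_{n\geq 0}\|P^n\psi\|_{L^2_\nu}<\infty$, solve the Poisson equation $\psi=u-Pu$ with $u=\sum_{n\geq 0}P^n\psi$, and apply the martingale CLT to the differences $D_j=u(Z_{j+1})-Pu(Z_j)$. This is exactly the content of the paper's Theorem~\ref{GL} from \cite{GL}, which you are simply unfolding; that is fine. Your formula $P^n\psi=\int_\Omega\mathcal{L}_{n-1}\circ\dots\circ\mathcal{L}_0\psi\,d\p$ (equivalently $\int_\Omega\mathcal{L}_{F_{\lambda,n}}\psi\,d\p$) and the duality $\la\phi,P^n\psi\ra_\nu=\int_X(\tilde\phi\circ\tau^n)\tilde\psi\,d\mu$ are also correct, and follow from Proposition~\ref{adjoint} and $\nu=\pi_*\mu$.

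The place where your write-up is imprecise is the passage from correlation decay to $L^2_\nu$-decay of $P^n\psi$. As stated, ``correlation decay plus contractivity of $P$'' does \emph{not} yield $\|P^n\psi\|_{L^2_\nu}\to 0$ exponentially: $P$ is a Markov operator, hence a contraction but not self-adjoint or normal on $L^2_\nu$, and a contraction can have $\la\psi,P^n\psi\ra\to 0$ while $\|P^n\psi\|$ stays bounded away from $0$ (a unitary with continuous spectrum is the model obstruction). What actually works, and is presumably what you intend by ``replacing one factor with $P^n\psi$'', is to take $\phi=P^n\psi$ in the duality identity to get
$$\|P^n\psi\|_{L^2_\nu}^2=\la P^n\psi,P^n\psi\ra_\nu=\int_X\big(\widetilde{P^n\psi}\circ\tau^n\big)\tilde\psi\,d\mu,$$
and then invoke Theorem~\ref{mixx} a second time with $\varphi:=\widetilde{P^n\psi}\in L^2_\mu(X)$ (finiteness of this $L^2$-norm is where contractivity enters) to obtain $\|P^n\psi\|_{L^2_\nu}^2\leq Cd^{-n}\|P^n\psi\|_{L^2_\nu}\|\psi\|_{DSH}$, hence $\|P^n\psi\|_{L^2_\nu}\leq Cd^{-n}\|\psi\|_{DSH}$. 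You should make this second application of Theorem~\ref{mixx} explicit; the word ``contractivity'' alone papers over the real step. The paper avoids this detour entirely and bounds $\|P^n\psi\|_{L^2_\nu}$ directly from the fiberwise estimate $\|\mathcal{L}_{n-1}\circ\dots\circ\mathcal{L}_0\psi-\la\mu_\lambda,\psi\ra\|_{L^2_{\mu_{\theta^n(\lambda)}}}\leq Cd^{-n}\|\psi\|_{DSH}$ of Lemma~\ref{compact} (uniform in $\lambda$ by Remark~\ref{uniform} and Corollary~\ref{mod}), combined with $\theta_*\p=\p$ and the fact that $P^n\psi$ does not depend on $\lambda$. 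Both roads lead to the same inequality; yours reuses Theorem~\ref{mixx} as a black box, the paper reuses its proof. One last remark on the variance: the martingale decomposition gives $\sigma^2=\Bbb{E}[D_0^2]=\int_{\pk}u^2-(Pu)^2\,d\nu$, which matches Theorem~\ref{GL} (stated with $g$, the solution of the Poisson equation) but not the formula $\int\psi^2-(P\psi)^2\,d\nu$ printed in Theorem~\ref{CLTM}; your claim that ``rearranging $\dots$ matches the expression in the statement'' does not actually go through, though the discrepancy is in the paper's statement rather than in your method.
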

\section*{Acknowledgement}
I would like to thank Professor Bernard Shiffman for many valuable conversations on the content of this work. I am also grateful to the referees for careful reading of the manuscript. Their suggestions improved the exposition of the paper.  
\section{Background}
\subsection{Super-potentials of positive closed currents}
Let $\pk$ denote the complex projective space and $\omega$ be the Fubini-Study form on $\pk$ normalized by $$\int_{\pk}\omega^k=1.$$ We denote the space of smooth $(p,q)$ forms on $\pk$ by $\mathcal{D}_{p,q}$ and let $\mathcal{D}^{p,q}=(\mathcal{D}_{k-p,k-q})'$ denote the set of bidegree $(p,q)$ currents. We say that a $(p,p)$ form $\Phi$ is (strongly) positive if at every point it can be written as a linear combination of forms of type $i\alpha_1\wedge\overline{\alpha}_1\wedge \dots \wedge i \alpha_p\wedge \overline{\alpha}_p$ where each $\alpha_i\in \mathcal{D}_{1,0}.$ In particular, a positive $(k,k)$ form is a product of a volume form and a positive function. We say that a  $(p,p)$ form $\Phi$ is weakly positive if $\Phi\wedge\varphi$ is a volume form for every positive form $\varphi\in \mathcal{D}_{k-p,k-p}.$ We say that $\Phi$ is a negative $(p,p)$ form if $-\Phi$ is positive.\\ \indent
A $(p,p)$ current $T$ is called (strongly) positive if $T\wedge \varphi$ is a positive measure for every weakly positive form $\varphi\in \mathcal{D}_{k-p,k-p}.$ A $(p,p)$ current $T$ is said to be negative if $-T$ is positive. We say that $T$ is closed if $dT=0$ in the sense of distributions. The mass of a positive closed $(p,p)$ current $T$ is defined by $\|T\|:=\int_{\pk}T\wedge\omega^{k-p}.$ We denote the set of all positive closed bidegree $(p,p)$ currents of mass one by $\Cp$ endowed with the weak topology of currents. The later is a compact convex set. We refer the reader to the manuscript \cite{DemBook} for basic properties of positive closed currents. \\ \indent
For a current $T\in \Cp,$ we denote its action on a smooth form $\Phi$ by $\la T,\Phi\ra.$ For a smooth $(p,q)$ form $\Phi$ denote by $\|\Phi\|_{\mathscr{C}^{\alpha}}$ the sum of $\mathscr{C}^{\alpha}$-norms of the coefficients in a fixed atlas. Following, \cite{DS11} for $\alpha>0$ we define a distance function on $\Cp$ by
$$dist_{\alpha}(R,R'):=\sup_{\|\Phi\|_{\mathscr{C}^{\alpha}}\leq 1}|\la R-R',\Phi\ra|$$
where $\Phi$ is a smooth $(k-p,k-p)$ form on $\pk.$ It follows from interpolation theory between Banach spaces \cite{Triebel} that
$$dist_{\beta}\leq dist_{\alpha}\leq C_{\alpha\beta}[dist_{\beta}]^{\frac{\alpha}{\beta}}$$
for $0<\alpha\leq \beta<\infty$ (see \cite[Lem. 2.1.2]{DS11} for the proof).  Moreover, for $\alpha\geq1$ 
$$dist_{\alpha}(\delta_a,\delta_b)\simeq\|a-b\|$$
where $\delta_a$ denotes the Dirac mass at $a$ and $\|a-b\|$ denotes the distance on $\pk$ induced by the Fubini-Study metric. We also remark that for $\alpha>0$ topology induced by $dist_{\alpha}$ coincides with the weak topology on $\Cp$ \cite[Prop. 2.1.4]{DS11}.
\\ \indent
 Let $T\in \Cp$ with $p\geq1$ then a $(p-1,p-1)$ current $U$ is called a \textit{quasi-potential} of $T$ if it satisfies the equation
 \begin{equation}\label{potential}
 T=\omega^p+dd^cU
 \end{equation}
 where $d=\partial+\overline{\partial}$ and $d^c:=\frac{i}{2\pi}(\overline{\partial}-\partial)$.
  In particular, if $p=1$ a quasi-potential is nothing but a qpsh function. Note that two qpsh functions satisfying (\ref{potential}) differ by a constant. When $p>1$ the quasi-potentials differ by $dd^c$-closed currents. The quantity $\la U,\omega^{k-p+1}\ra$ is called the \textit{mean} of $U.$ The following result provides solutions to (\ref{potential}) with quantitative estimates.
  \begin{thm}\cite{DS11}\label{DSq}
  Let $T\in \Cp$ then there exists a negative quasi-potential $U$ of $T$ which depends linearly on $T$ such that the mean of $U$ satisfies  $$| \la U,\ompk \ra|\leq C$$  
  where $C>0$ independent of $T\in \Cp.$
  \end{thm}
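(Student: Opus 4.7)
The plan is to solve $dd^c U = T - \omp$ via an explicit kernel construction on $\pk\times\pk$ and then to modify the solution so as to enforce negativity and a uniformly bounded mean.

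First, I would construct a form $K(x,y)$ of bidegree $(k-1,k-1)$ on $\pk\times\pk$, smooth off the diagonal $\Delta$ with integrable singularities along $\Delta$, satisfying
$$
dd^c_x K(x,y) \;=\; [\Delta] \;-\; \pi_1^*\omp \wedge \pi_2^*\om^{k-p}
$$
as currents on the product. Both sides lie in the same cohomology class of $\pk\times\pk$, and one concrete realization comes from the Green operator of the Hodge Laplacian on $\pk$, which automatically gives a linear construction. Setting $U(x):=\pi_{1*}\bigl(K(x,y)\wedge\pi_2^*T(y)\bigr)$ then yields a $(p-1,p-1)$ current depending linearly on $T$; slicing the diagonal term recovers $T(x)$, while slicing the smooth term gives $\|T\|\,\omp=\omp$ since $T\in\Cp$. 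Hence $dd^c U=T-\omp$.

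To enforce negativity I would further subtract from $K$ a large multiple $C_0\,\pi_1^*\om^{p-1}\wedge\pi_2^*\om^{k-p}$ of a smooth, strongly positive, closed form. This modification is $dd^c$-closed, so it preserves the equation $dd^c U=T-\omp$, and (using $\|T\|=1$) it shifts $U$ by the smooth negative form $-C_0\om^{p-1}$. Choosing $C_0$ larger than the $L^\infty$ size of the positive part of the original kernel makes the modified $U$ negative as a $(p-1,p-1)$ current while retaining linearity in $T$. The mean bound is then immediate from
$$
\la U,\ompk\ra
=\int_{\pk\times\pk}K(x,y)\wedge\pi_1^*\ompk\wedge\pi_2^*T(y)
=\la T,h\ra,
$$
where $h(y):=\int_{\pk}K(x,y)\wedge\om(x)^{k-p+1}$ is a bounded continuous $(k-p,k-p)$ form on $\pk$, with bound coming from the integrability of the singularity of $K$ along $\Delta$. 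Since $T$ has mass one and $h$ does not depend on $T$, we get $|\la T,h\ra|\leq C$ uniformly for $T\in\Cp$.

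The principal obstacle is the kernel construction itself: $K$ must simultaneously satisfy the cohomological identity, exhibit borderline integrable singularities of order $|x-y|^{-(2k-2)}$ along $\Delta$, and admit the negativity correction without destroying either of the above. Explicit candidates built from Bochner-Martinelli or Cauchy-Fantappi\`e type kernels adapted to $\pk$ via the Fubini-Study structure realize all three properties, but verifying them simultaneously while keeping $T\mapsto U$ linear and continuous on the compact convex set $\Cp$ is the technical heart of the argument.
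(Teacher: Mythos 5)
Your overall strategy (construct a kernel $K$ on $\pk\times\pk$ solving a $dd^c$-equation for the diagonal, define $U$ by integrating $T$ against $K$, then shift to enforce negativity and bound the mean by testing against a fixed bounded form) is precisely the Dinh--Sibony construction that the paper cites and sketches just after the statement of the theorem. However, your cohomological assertion is wrong and the equation you propose has no solution: $[\Delta]$ and $\pi_1^*\omp\wedge\pi_2^*\om^{k-p}$ do \emph{not} lie in the same class of $H^{2k}(\pk\times\pk,\Bbb{R})$. By K\"unneth the class of $[\Delta]$ is the full sum $\sum_{j=0}^{k}\pi_1^*[\om^j]\smile\pi_2^*[\om^{k-j}]$, and no single summand represents it, so there is no $(k-1,k-1)$ current $K$ with $dd^cK=[\Delta]-\pi_1^*\omp\wedge\pi_2^*\om^{k-p}$. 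The correct equation is $dd^cK=[\Delta]-\Omega$ with $\Omega$ the full K\"unneth representative; your computation is then rescued because, after wedging with $\pi_2^*T$ and pushing forward, every term $\pi_1^*\om^j\wedge\pi_2^*\om^{k-j}\wedge\pi_2^*T$ with $j\ne p$ dies for degree reasons ($T\wedge\om^{k-j}$ has total degree $2(p+k-j)\ne 2k$), leaving exactly $\|T\|\,\omp=\omp$. The upshot is that a single kernel works for all $p$; you do not want, and cannot build, a $p$-specific kernel as proposed.

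There is also a soft spot in the negativity step: you shift by a constant multiple of $\pi_1^*\om^{p-1}\wedge\pi_2^*\om^{k-p}$ and invoke an $L^\infty$ bound on ``the positive part of the original kernel.'' But the estimate (\ref{DSes}) quoted in the paper already shows $\|K\|_\infty$ blows up like $-\text{dist}^{2(1-p)}\log\text{dist}$ near $\Delta$, so boundedness of the positive part is not automatic from a generic Green-operator solution --- it is precisely what the explicit Dinh--Sibony kernel is engineered to satisfy (the singularity along $\Delta$ has a definite sign, so only the smooth remainder needs to be shifted). That verification is the genuine technical content you defer to the last paragraph, and it cannot be obtained by shifting an arbitrary parametrix.
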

The quasi-potential $U$ is obtained in \cite{DS11} by using a kernel which solves $dd^c$-equation for the diagonal $\Delta$ of $\pk\times\pk$ (see also \cite{GiS}). More precisely, for $T\in\Cp$
$$U(z)=\int_{z\not=\zeta}T(\zeta)\wedge K(z,\zeta)$$
and the kernel $K(z,\zeta)$ has tame singularities in the sense that
\begin{equation}\label{DSes}
\|K(z,\zeta)\|_{\infty} \lesssim -\text{dist}(z,\zeta)^{2(1-p)}\log \text{dist}(z,\zeta)\ \ \text{and}\ \ \|\nabla K(z,\zeta)\|_{\infty} \lesssim \text{dist}(z,\zeta)^{1-2p}
\end{equation}
where $\|\nabla K\|_{\infty}$ denotes the sum $\sum_j|\nabla K_j|$ and $K_j$'s are the coefficients of $K$ for a fixed atlas of $\pk\times\pk.$ \\ \indent
Super-potentials of positive closed currents were introduced by Dinh and Sibony \cite{DS11} which extends the notion of quasi-potential defined for the positive closed bidegree $(1,1)$ currents. If $T$ is a smooth form in $\Cp,$ \textit{super-potential} of $T$ of mean $m$ is  defined by
$$\U_T:\Cpk \to \Bbb{R}\cup\{-\infty\}$$  
\begin{equation}\label{super}
\U_T(R)=\la U_T,R\ra
\end{equation}
where $U_T$ is a quasi-potential of $T$ of mean $m.$ Then it follows that (see \cite[Lemma 3.1.1]{DS11})
$$\U_T(R)=\la T,U_R\ra$$
where $U_R$ is a quasi-potential of $R$ of mean $m.$ In particular, the definition of $\U_T$ in (\ref{super}) is independent of the choice of $U_T$ of mean $m$. Note that super-potential of $T$ of mean $m'$ is given by $\U_T+m'-m.$ More generally, for an arbitrary current $T\in\Cp$ super-potential of $T$ is defined by $\U_T(R)$ on smooth forms $R\in \Cpk$ as in (\ref{super}) where $U_R$ is smooth. Then the definition of super-potential can be extended to a function on $\Cpk$ with values in $\Bbb{R}\cup \{-\infty\}$ by approximation (see \cite[Proposition 3.1.6]{DS11}). In the sequel, for $T\in \Cp$ we denote $\|\U_T\|_{\infty}=\sup_{R\in\Cpk}|\U_T(R)|.$ Note that the later $\sup$ is finite if $T\in \Cp$ is smooth. Finally, we remark that super-potentials determine the currents:
\begin{prop}\cite{DS11}\label{determine}
Let $S,S'$ be currents in $\Cp$ with super-potentials $\U_S,\U_{S'}.$ If $\U_S=\U_{S'}$ on smooth forms in $\Cpk$ then $S=S'.$
\end{prop}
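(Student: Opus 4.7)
The plan is to exploit the hypothesis on the super-potentials to deduce $\la S-S',\Phi\ra=0$ for an arbitrary smooth $(k-p,k-p)$ form $\Phi$ on $\pk$; this forces $S=S'$ as currents.

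By the (approximation) definition of the super-potential for a general $T\in\Cp$ together with the identity $\U_T(R)=\la T, U_R\ra$ on smooth $R$, the hypothesis amounts to
$$\la S, U_R\ra=\la S', U_R\ra\qquad\text{for every smooth }R\in\Cpk,$$
where $U_R$ denotes a smooth quasi-potential of $R$ of the prescribed common mean $m$. Given an arbitrary smooth $(k-p,k-p)$ form $\Phi$ on $\pk$, I would probe this identity with the family
$$R_\epsilon:=\ompk+\epsilon\, dd^c\Phi.$$
The form $R_\epsilon$ is smooth, $d$-closed, and cohomologous to $\ompk$, so it has unit mass; moreover, for $\epsilon>0$ sufficiently small (depending on $\|\Phi\|_{\mathscr{C}^2}$), strict positivity of $\ompk$ forces $R_\epsilon$ to remain strongly positive, so $R_\epsilon\in\Cpk$.

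A smooth quasi-potential of $R_\epsilon$ of mean $m$ is given explicitly by
$$U_{R_\epsilon}=\epsilon\,\Phi+c_\epsilon\,\omega^{k-p},\qquad c_\epsilon:=m-\epsilon\,\la\Phi,\omp\ra,$$
since $dd^c U_{R_\epsilon}=\epsilon\, dd^c\Phi=R_\epsilon-\ompk$ and $\la U_{R_\epsilon},\omp\ra=m$. Substituting into $\la S-S', U_{R_\epsilon}\ra=0$ and using $\la S-S',\omega^{k-p}\ra=\|S\|-\|S'\|=0$ yields
$$0=\epsilon\,\la S-S',\Phi\ra+c_\epsilon\,\la S-S',\omega^{k-p}\ra=\epsilon\,\la S-S',\Phi\ra.$$
Dividing by $\epsilon$ and letting $\Phi$ range over all smooth $(k-p,k-p)$ forms gives $S=S'$. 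The argument is essentially a direct computation; the only small point is the explicit ansatz for $U_{R_\epsilon}$ above, which sidesteps any deeper appeal to the $dd^c$-lemma, so I do not expect a genuine obstacle.
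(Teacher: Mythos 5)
Your argument is correct, and since the paper cites \cite{DS11} for this proposition without reproducing a proof, there is no in-paper argument to compare against; your route is essentially the standard one from Dinh--Sibony. Two technical points are used implicitly and deserve a word. First, $R_\epsilon\in\Cpk$ for small $\epsilon$ requires $\ompk$ to lie in the \emph{interior} of the cone of strongly positive $(k-p+1,k-p+1)$ forms (so that a small smooth perturbation stays strongly positive); this is a standard Kähler-linear-algebra fact (the trace form $\omega^q$ pairs strictly positively with every nonzero weakly positive form), but it is not purely formal and should be cited or noted. Second, you substitute your explicit $U_{R_\epsilon}$ into $\U_S(R_\epsilon)=\la S,U_{R_\epsilon}\ra$; this is legitimate because the pairing $\la T,U_R\ra$ is independent of the choice of smooth quasi-potential $U_R$ of a fixed mean: if $\Psi$ is smooth, $dd^c$-closed, with $\la\Psi,\omp\ra=0$, then writing $T=\omp+dd^cV$ gives $\la T,\Psi\ra=\la\omp,\Psi\ra+\la V,dd^c\Psi\ra=0$. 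With those two remarks made explicit, the computation $0=\la S-S',U_{R_\epsilon}\ra=\epsilon\la S-S',\Phi\ra$ is airtight, since $\la S-S',\omega^{k-p}\ra=\|S\|-\|S'\|=0$.
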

 
\subsubsection{Super-potentials of pull-back and push-forward by holomorphic endomorphisms}
Let $f:\pk\to \pk$ be a holomorphic endomorphism of algebraic degree $d\geq 2.$ For $1\leq p\leq k$ the pull-back and push-forward operators 
$$L:=d^{-p}f^*:\Cp\to \Cp$$
$$\Lambda:=d^{-p+1}f_*\Cpk\to \Cpk$$
are well defined and continuous (see for instance \cite{Meo,DS07,DS11}). 
\begin{prop}\cite{DS11}\label{push-pull}
Let $S\in \Cp$ (respectively $R\in\Cpk$). We also let $\U_S$ (respectively $\U_R$) and $\U_{L(\omp)}$ (respectively $\U_{\Lambda(\ompk)}$) be super-potentials of $S$ (respectively $R$) and $L(\omp)$ (respectively $\Lambda(\ompk)$). Then the currents $L(S)$ and $\Lambda(R)$ admit super-potentials given by
$$ \U_{L(S)}=\frac{1}{d}\U_S\circ \Lambda+\U_{L(\omp)}$$
$$\U_{\Lambda(R)}=d\U_R\circ L+ \U_{\Lambda(\ompk)}.$$

\end{prop}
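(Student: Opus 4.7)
The strategy is to verify both identities on smooth $R\in\Cpk$, where the quasi-potentials involved are honest smooth forms and duality applies, and then to extend them to arbitrary $R\in\Cpk$ via the approximation procedure used to define super-potentials on non-smooth inputs (\cite[Prop.~3.1.6]{DS11}). Throughout, the quasi-potentials should be the canonical negative ones supplied by Theorem~\ref{DSq}, so that all means are fixed once and for all.

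For the pull-back formula I would start from the symmetry $\U_{L(S)}(R)=\la L(S),U_R\ra$ applied to a smooth quasi-potential $U_R$ of $R$ and use adjointness of $f^*$ and $f_*$ to write $\U_{L(S)}(R)=d^{-p}\la S,\,f_*U_R\ra$. The algebraic heart of the matter is the observation that $d^{-(p-1)}f_*U_R$ produces a quasi-potential of $\Lambda(R)$: applying $d^{-(p-1)}f_*$ to $R=\ompk+dd^cU_R$ and using $d^{-(p-1)}f_*\ompk=\Lambda(\ompk)=\ompk+dd^cU_{\Lambda(\ompk)}$ yields
$$\Lambda(R)=\ompk+dd^c\bigl[U_{\Lambda(\ompk)}+d^{-(p-1)}f_*U_R\bigr],$$
so $U_{\Lambda(R)}:=U_{\Lambda(\ompk)}+d^{-(p-1)}f_*U_R$ is a quasi-potential of $\Lambda(R)$. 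Solving for $f_*U_R$ and substituting back gives
$$\U_{L(S)}(R)=\tfrac{1}{d}\la S,U_{\Lambda(R)}\ra-\tfrac{1}{d}\la S,U_{\Lambda(\ompk)}\ra.$$
The first summand is $\tfrac{1}{d}\U_S(\Lambda(R))$. To recognise the second term as $\U_{L(\omp)}(R)$, I would write $S=\omp+dd^cU_S$, integrate by parts, use $dd^cU_{\Lambda(\ompk)}=\Lambda(\ompk)-\ompk$, and compare with the direct computation $\U_{L(\omp)}(R)=d^{-p}\la\omp,f_*U_R\ra$ obtained by applying the above with $S$ replaced by $\omp$.

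The second identity is proved by the dual computation: start from $\U_{\Lambda(R)}(S)=\la\Lambda(R),U_S\ra=d^{-(p-1)}\la R,f^*U_S\ra$ for smooth $S$, and then recognise that $d^{-p}f^*U_S$ serves, up to the correction by $U_{L(\omp)}$, as a quasi-potential of $L(S)$. Collecting terms yields the coefficient $d$ (rather than $1/d$), which arises from the product $d^{-(p-1)}\cdot d^p=d$ of the two normalisations; the computation is completely parallel but with the roles of $L$ and $\Lambda$ swapped.

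The main obstacle is the careful bookkeeping of the mean conventions, since the quasi-potentials $U_S$, $U_R$, $U_{L(\omp)}$, $U_{\Lambda(\ompk)}$ are determined only up to $dd^c$-closed currents and each super-potential is defined only after fixing a mean. Once the identity is established for smooth $R$ with the conventions above, passing from smooth $R$ to an arbitrary $R\in\Cpk$ is handled by the extension mechanism of \cite[Prop.~3.1.6]{DS11}: both sides, viewed as functions on $\Cpk$, agree along the canonical smooth approximations $R_n\to R$ (with $\Lambda$ continuous on $\Cpk$ and $\U_S\circ\Lambda$, $\U_{L(\omp)}$, $\U_{L(S)}$ upper semi-continuous in the sense of \cite{DS11}), and hence agree everywhere.
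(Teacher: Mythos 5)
The paper cites this proposition from \cite{DS11} without reproducing a proof, so there is no in-paper argument to compare yours against; I can only assess the proposal on its own terms. Your overall strategy --- reduce to smooth $R$, manipulate quasi-potentials of $\Lambda(R)$ and $L(S)$ using duality of $f^*$ and $f_*$, then extend by the regularization scheme of \cite[Prop.~3.1.6]{DS11} --- is indeed the mechanism of the Dinh--Sibony proof.

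There is, however, a real gap in the middle step, and it sits precisely in the ``bookkeeping of means'' that you flag but do not actually carry out. After solving for $f_*U_R$ you write
$\U_{L(S)}(R)=\tfrac{1}{d}\la S,U_{\Lambda(R)}\ra-\tfrac{1}{d}\la S,U_{\Lambda(\ompk)}\ra$,
assert that the first summand equals $\tfrac{1}{d}\U_S(\Lambda(R))$, and propose to massage the second summand into $\U_{L(\omp)}(R)$ by integrating by parts against $S=\omp+dd^cU_S$. This cannot succeed: the second summand $-\tfrac{1}{d}\la S,U_{\Lambda(\ompk)}\ra$ is a genuine constant in $R$, whereas $\U_{L(\omp)}(R)$ is not. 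The $R$-dependence you need actually hides in the \emph{first} summand. The quasi-potential you construct, $U_{\Lambda(R)}:=U_{\Lambda(\ompk)}+d^{-(p-1)}f_*U_R$, has mean $m_V=\la U_{\Lambda(\ompk)},\omp\ra+d^{-(p-1)}\la U_R,f^*\omp\ra$, which depends on $R$; since the super-potential $\U_S$ is defined relative to a \emph{fixed} mean, $\la S,U_{\Lambda(R)}\ra=\U_S(\Lambda(R))+(m_V-m_S)$ and the variable part of the shift $(m_V-m_S)$ is exactly $d^{-(p-1)}\la U_R,f^*\omp\ra=d\,\U_{L(\omp)}(R)$. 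Dividing by $d$, this is where $\U_{L(\omp)}(R)$ actually enters, while $-\tfrac{1}{d}\la S,U_{\Lambda(\ompk)}\ra$ only contributes to the constant that is absorbed by the choice of mean for $\U_{L(S)}$. So the decomposition into two summands that you name ``$\tfrac1d\U_S\circ\Lambda$'' and ``$\U_{L(\omp)}$'' is not term-by-term; it requires isolating the mean correction explicitly. A secondary imprecision: for the extension to non-smooth $R$, ``agreement on smooth forms plus upper semi-continuity'' is not enough on its own; you should invoke that super-potentials of the currents on both sides are, by definition, the limits along the canonical regularizations $R_\theta\to R$, and both sides are super-potentials of $L(S)$.
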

We refer the reader to \cite{DS11} for further properties of super-potentials.
\subsection{Moderate currents}
Following \cite{DS1,DNS}, we say that a positive closed bidegree $(p,p)$ current $T$ is \textit{moderate} if for every compact family of qpsh functions $\mathcal{K}$ there exists constants $c>0$ and $\rho>0$ such that
\begin{equation}\label{modest}
\int_{\pk}e^{-\rho \varphi}\ T\wedge \omega^{k-p}\leq c
\end{equation}
for every $\varphi\in \mathcal{K}.$ The existence of $\rho$ and $c$ in $(\ref{modest})$ is equivalent to the existence of $\rho'>0$ and $c'>0$ satisfying 
\begin{equation}\label{modest2}
T\wedge\omega^{k-p}\{z\in \pk:\varphi(z)<-M\}\leq c'e^{-\rho'M}
\end{equation}
for every $M\geq 0$ and $\varphi\in \mathcal{K}.$ It follows from \cite{DNS} ( see also \cite{DN12}) that if $T\in\Cp$ has H\"{o}lder continuous super-potentials 
\begin{equation}\label{expoex}
|\U_T(R)-\U_T(R')|\leq Cdist_{\alpha}(R,R')^{\beta}
\end{equation} where $\alpha>0$ fixed then there exists constants $c',\rho'>0$ as in (\ref{modest2}) which depend only on $\pk,\mathcal{K},C$ and H\"{o}lder exponent $\beta.$ Thus, we have the following result:  
\begin{thm}\cite{DNS} \label{moderate}
Let $T\in \Cp$ admit a H\"{o}lder continuous super-potential as in (\ref{expoex}). Then the current $T$ is moderate. Moreover, the constants $c,\alpha$ in (\ref{modest}) depend only on $\pk, \mathcal{K},C$ and H\"{o}lder exponent $\beta.$ 
\end{thm}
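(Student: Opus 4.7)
The plan is to turn H\"older continuity of $\U_T$ into an exponential tail bound on the mass that $T\wedge\om^{k-p}$ places on sublevel sets of qpsh functions; since (\ref{modest}) and (\ref{modest2}) are equivalent, it suffices to produce constants $c',\rho'>0$, depending only on $\pk,\mathcal{K},C$ and $\beta$, so that $(T\wedge\om^{k-p})(\{\varphi<-M\})\leq c'e^{-\rho'M}$ for all $M\geq 0$ and all $\varphi\in\mathcal{K}$. After an overall translation, one may normalize the family so that $\om+dd^c\varphi\geq 0$ and $\sup_{\pk}\varphi=0$.

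The first step is to realize $\la T\wedge\om^{k-p},\varphi\ra$ as a super-potential evaluation. Set $S_\varphi:=\om+dd^c\varphi\in \mathscr{C}_1$ and $R_\varphi:=S_\varphi\wedge\om^{k-p}\in\Cpk$; since $\om$ is closed, $R_\varphi=\ompk+dd^c(\varphi\,\om^{k-p})$, so $\varphi\,\om^{k-p}$ is a quasi-potential of $R_\varphi$ and after fixing the mean one has $\U_T(R_\varphi)=\la T\wedge\om^{k-p},\varphi\ra+O(1)$ uniformly in $\varphi\in\mathcal{K}$. The same construction applied to the truncation $\varphi_M:=\max(\varphi,-M)$ produces $R_{\varphi,M}\in\Cpk$ with $\U_T(R_{\varphi,M})=\la T\wedge\om^{k-p},\varphi_M\ra+O(1)$. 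I would next bound $dist_\alpha(R_\varphi,R_{\varphi,M})$ by integration by parts: against a test form $\Phi$ with $\|\Phi\|_{\mathscr{C}^2}\leq 1$ one has $\la R_\varphi-R_{\varphi,M},\Phi\ra=\la \varphi-\varphi_M,dd^c\Phi\wedge\om^{k-p}\ra$, so $dist_2(R_\varphi,R_{\varphi,M})\lesssim\|\varphi-\varphi_M\|_{L^1}$; the interpolation inequality stated before Theorem~2.3 then gives $dist_\alpha(R_\varphi,R_{\varphi,M})\lesssim\|\varphi-\varphi_M\|_{L^1}^{\alpha/2}$. Combining the H\"older assumption (\ref{expoex}) with the pointwise bound $\varphi_M-\varphi\geq M$ on $\{\varphi<-2M\}$ gives
$$M\,(T\wedge\om^{k-p})(\{\varphi<-2M\})\;\leq\;\la T\wedge\om^{k-p},\varphi_M-\varphi\ra\;\leq\; C'\|\varphi-\varphi_M\|_{L^1}^{\alpha\beta/2}+C''.$$
Since $\mathcal{K}$ is compact in $L^1$ and $\|\varphi-\varphi_M\|_{L^1}\to 0$ in a quantitative way for compact families of qpsh functions, this already produces a \emph{polynomial} tail estimate.

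The main obstacle, and the heart of the argument, is upgrading polynomial decay to the exponential bound (\ref{modest2}). For this I would iterate: on the sublevel set $\{\varphi<-M\}$, the rescaled function $\tilde\varphi:=A(\varphi+M)$, for a suitable constant $A$ depending only on $\mathcal{K}$, still lies, after a uniformly controlled modification, in a compact family of qpsh functions of the same type, so the previous estimate applies again with the same constants. Each additional drop of $M$ by a fixed amount $M_0$ therefore multiplies the tail mass by a factor $\kappa\in(0,1)$ that depends only on $\pk,\mathcal{K},C$ and $\beta$. Iterating $j$ times yields $(T\wedge\om^{k-p})(\{\varphi<-jM_0\})\leq c\kappa^j$, which rearranges into (\ref{modest2}) with the claimed dependence of the constants.
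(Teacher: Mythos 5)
The paper does not prove Theorem~\ref{moderate} itself; it cites \cite{DNS} (see also \cite{DN12}). Your first three steps do reproduce the core of that argument: realize $\la T\wedge\om^{k-p},\varphi\ra$ as a super-potential evaluation on $R_\varphi=(\om+dd^c\varphi)\wedge\om^{k-p}$, truncate, integrate by parts to bound $dist_2(R_\varphi,R_{\varphi,M})$ by $\|\varphi-\varphi_M\|_{L^1}$, interpolate, and invoke (\ref{expoex}). Up to that point the reasoning is sound and the constants have the correct dependence.

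The gap is in the last two paragraphs. First, the $O(1)$ error that you recorded as a fixed constant $C''$ in the displayed inequality is not a constant at all: tracing through the mean adjustment, $\la T\wedge\om^{k-p},\varphi_M-\varphi\ra$ differs from $\U_T(R_{\varphi,M})-\U_T(R_\varphi)$ by exactly $\int(\varphi_M-\varphi)\,\om^k=\|\varphi-\varphi_M\|_{L^1}$. Second, and this is the missing ingredient, $\|\varphi-\varphi_M\|_{L^1}$ decays \emph{exponentially} in $M$, uniformly over $\varphi\in\mathcal{K}$: this is the classical uniform Skoda integrability theorem (equivalently, the statement that the smooth form $\om^k$ is moderate), with constants depending only on $\pk$ and $\mathcal{K}$. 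Once you substitute $\|\varphi-\varphi_M\|_{L^1}\lesssim e^{-aM}$ into your inequality, the right-hand side is already $O(e^{-a\alpha\beta M/2})$, and you obtain (\ref{modest2}) directly; no iteration is needed. Your final ``rescale and restart'' step, as written, does not work: $A(\varphi+M)$ is $A\om$-psh, so for $A>1$ it leaves the class entirely, and even with $A\leq 1$ the step-6 estimate bounds $\mu\{\varphi<-2M\}$ in terms of the $\om^k$-mass of $\{\varphi<-M\}$ (through the $L^1$ norm), not the $\mu$-mass, so the quantity you are trying to iterate on is not the quantity the inequality controls. The ``uniformly controlled modification'' you invoke is precisely where this mismatch would have to be resolved, and it is left unaddressed. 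The fix is to drop the iteration and cite Skoda's uniform estimate; that is also what makes the constants depend only on $\pk,\mathcal{K},C,\beta$ as claimed.
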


\section{Random Green currents} \label{random}

In this section we prove Theorem \ref{th} in a slightly more general context:
\begin{thm} \label{main}
Let $\{T_n\}_{n\geq0}$ be a sequence of positive closed bidegree $(p,p)$ currents of mass one such that $||\U_{T_n}||_{\infty}=o(d^n).$ Then the sequence $\{d^{-pn}(F_{\lambda,n})^*T_n\}$ almost surely converges weakly to a positive closed bidegree $(p,p)$ current $T_p(\lambda)$ satisfying 
\begin{equation}\label{inv}
f_0^*(T_p(\theta(\lambda)))=d^pT_p(\lambda).
\end{equation}
\end{thm}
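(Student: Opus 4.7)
The plan is to use the super-potential calculus to reduce weak convergence of $S_n(\lambda) := d^{-pn}(F_{\lambda,n})^* T_n$ to an almost-sure series estimate. Set $L_j := d^{-p} f_j^*$, $\Lambda_j := d^{-p+1}(f_j)_*$, and $\Lambda^{(j)}_\lambda := \Lambda_{j-1} \circ \cdots \circ \Lambda_0$ (with $\Lambda^{(0)}_\lambda R := R$). Since $f_j \in \Hd$ with probability one, the operators $L_j : \Cp \to \Cp$ and $\Lambda_j : \Cpk \to \Cpk$ are a.s.\ well-defined, and $S_n(\lambda) = L_0 \cdots L_{n-1}(T_n) \in \Cp$. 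Iterating Proposition \ref{push-pull}, for every smooth $R \in \Cpk$ I obtain
$$\U_{S_n(\lambda)}(R) = \frac{1}{d^n} \U_{T_n}\bigl(\Lambda^{(n)}_\lambda R\bigr) + \sum_{j=0}^{n-1} \frac{1}{d^j} \U_{L_j(\omp)}\bigl(\Lambda^{(j)}_\lambda R\bigr).$$
The first term is bounded by $d^{-n} \|\U_{T_n}\|_\infty = o(1)$ uniformly in $R$ and $\lambda$ by hypothesis, so convergence reduces to almost-sure summability of the series on the right.

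For $f \in \Hd$ the form $d^{-p} f^* \omp$ is smooth, so its super-potential is bounded on $\Cpk$. Using the Dinh--Sibony kernel $K(z,\zeta)$ together with the singularity estimates (\ref{DSes}), I would bound $g(f) := \|\U_{d^{-p} f^* \omp}\|_\infty$ by an explicit (logarithmic-type) function of $\mathrm{dist}(f, \mathcal{M})$, finite throughout $\Hd$. Since the $f_j$ are iid with $m(\mathcal{M}) = 0$, the $g(f_j)$ are a.s.\ finite; combined with the geometric decay $d^{-j}$, a Borel--Cantelli / Kolmogorov three-series argument --- using $m\{g > M\} \to 0$ and picking thresholds $c_j = o(d^j)$ with $\sum_j m\{g > c_j\} < \infty$ --- yields $\sum_{j\ge 0} d^{-j} g(f_j) < \infty$ almost surely. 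Hence $\U_{S_n(\lambda)}(R)$ has an a.s.\ limit $\U^\infty_\lambda(R)$ for every smooth $R \in \Cpk$ on a set $\mathscr{A} \subset \Omega$ of full measure, which I make $\theta$-invariant by intersecting with its shifts $\theta^{\pm n}(\mathscr{A})$.

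To upgrade this pointwise convergence of super-potentials to weak convergence of currents: by compactness of $\Cp$, the sequence $\{S_n(\lambda)\}$ has weak limit points; any two such must agree since their super-potentials both coincide with $\U^\infty_\lambda$ on smooth $R$, so by Proposition \ref{determine} the weak limit $T_p(\lambda) \in \Cp$ is unique. The invariance (\ref{inv}) is then a direct consequence of the identity $F_{\lambda,n}^* = f_0^* \circ F_{\theta \lambda, n-1}^*$: dividing by $d^{pn}$, letting $n \to \infty$, and using weak continuity of $f_0^*$ on $\Cp$ for $f_0 \in \Hd$ yields $d^p T_p(\lambda) = f_0^* T_p(\theta \lambda)$.

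The hardest step is the technical estimate $g(f) < \infty$ on $\Hd$ together with sufficient tail decay to drive the probabilistic summation without assuming integrability of $\log\mathrm{dist}(\cdot,\mathcal{M})$. This rests on careful analysis of the kernel $K(z, \zeta)$ integrated against the smooth form $d^{-p} f^* \omp$, relying essentially on (\ref{DSes}); it is also where the holomorphy of $f$ matters, since $f^* \omp$ is ill-behaved on $\mathcal{M}$, and the geometric factor $d^{-j}$ is what absorbs the potentially large singular contributions of $g(f_j)$.
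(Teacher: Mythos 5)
Your super-potential decomposition of $\U_{S_n(\lambda)}$ via Proposition \ref{push-pull} matches the paper exactly, and your deduction of the invariance (\ref{inv}) is also essentially the paper's (the paper phrases it through Proposition \ref{determine}, you through $F_{\lambda,n}^*=f_0^*\circ F_{\theta\lambda,n-1}^*$; both work). The problem is the probabilistic step. You reduce everything to the a.s.\ summability of $\sum_j d^{-j}g(f_j)$ with $g(f):=\|\U_{d^{-p}f^*\omp}\|_\infty$, and claim that because $m\{g>M\}\to 0$ one can choose truncation levels $c_j=o(d^j)$ with $\sum_j m\{g>c_j\}<\infty$. This is not a valid deduction: if, say, $m\{g>t\}$ is of order $1/\log t$, then any $c_j\le d^j$ forces $m\{g>c_j\}\ge 1/(j\log d)$, which is not summable, so no admissible $c_j$ exists. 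The hypothesis $m(\mathcal{M})=0$ gives decay of the tail to zero but no rate, and a rate is exactly what your truncation requires. (A side issue: the kernel estimates (\ref{DSes}) together with \cite{DD} yield a power-law bound $g(f)\lesssim dist(f,\mathcal{M})^{-q}$, $q\ge1$, as in Lemma \ref{pull}, not the ``logarithmic-type'' bound you assert; the derivative of $f$ near $\mathcal{M}$ contributes the power.) In effect, a.s.\ absolute summability of $d^{-j}\|\U_{L_j(\omp)}\|_\infty$ is essentially the integrability $\log dist(\cdot,\mathcal{M})\in L^1_m(\pN)$, which the theorem deliberately avoids assuming.

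The paper's proof never controls $\|\U_{L_j(\omp)}\|_\infty$. Since each quasi-potential $U_{L_j(\omp)}$ produced by Theorem \ref{DSq} is negative, the partial sums $\sum_{j<n} d^{-j}\U_{L_j(\omp)}\circ\Lambda^j$ are \emph{decreasing}, and by \cite[Cor.\ 3.2.7]{DS11} it suffices to show their value at $\ompk$ (the mean) stays bounded below. The key point is that Theorem \ref{DSq} gives means $m_j=\la U_{L_j(\omp)},\ompk\ra$ in a fixed interval $[C,0]$ \emph{independently of $f_j$}, so $\sum d^{-j}m_j$ converges with a universal bound, with no integrability input. The paper then writes $\Lambda^j(\ompk)=\ompk+dd^cS_j$ and performs a telescoping computation that reduces the remaining error to the single quantity $d^{-n}\la S_n,\omp\ra$, controlled on the full-measure, shift-invariant set $\Hd^{\Bbb{N}}(\epsilon)$ for $\epsilon<\frac{1}{2q}\log d$. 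The maneuver you are missing is precisely this replacement of the sup-norms (which blow up like $dist(f_j,\mathcal{M})^{-q}$) by the uniformly bounded means. Without it, or without an added integrability hypothesis, your argument does not close.
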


The current $T_p(\lambda)$ is called as  \textit{random Green current} associated with  the distribution $m.$ 
\begin{proof}
Let $\mathcal{A}$ be the set of $\lambda\in \Omega$  such that $d^{-pn}(F_{\lambda,n})^*T_n$ is well-defined for every $n\geq 0$ and converges weakly to a positive closed bidegree $(p,p)$ current. For fixed $\epsilon>0$ we also denote  
  $$\Hd^{\Bbb{N}}(\epsilon):=\{\lambda\in \Omega:\ dist(\lambda(j),\mathcal{M})\geq e^{-\epsilon j}\  \text{for all but finitely many}\ j\in \Bbb{N} \}\subset \Hd^{\Bbb{N}}.$$ 
Then Kolmogorov's zero-one law  and the fact that $m$ is a Borel measure with $m( \mathcal{M})=0$ implies that the set  $\Hd^{\Bbb{N}}(\epsilon)$ has probability one and clearly invariant under the shift $\theta.$ We will prove that $\Hd^{\Bbb{N}}(\epsilon)\subset \mathcal{A}$ for sufficiently small $\epsilon>0.$ Assuming $\Hd^{\Bbb{N}}(\epsilon)\subset \mathcal{A}$ for the moment, since $(\Omega, \mathscr{B},\p)$ is a complete probability space this implies that $\mathcal{A}$ is measurable and has probability one. 
Then we define
$$\mathscr{A}:=\cap_{n=0}^{\infty}\theta^n(\mathcal{A})$$
which is clearly measurable and invariant under $\theta.$ Moreover, $\mathscr{A}$ contains $\Hd^{\Bbb{N}}(\epsilon)$ hence $\mathscr{A}$ has probability one.  \\ \indent 
 Next, we prove that $\Hd^{\Bbb{N}}(\epsilon)\subset \mathcal{A}$ for small $\epsilon>0.$ In the rest of the proof, we denote $f_j:=\lambda(j)$ where $\lambda \in \Hd^{\Bbb{N}}(\epsilon)$ and denote the pull-back and push-forward operators by
  $$L_j:=\frac{1}{d^p}f_j^*:\mathscr{C}_p\to\mathscr{C}_p$$
  $$\  \Lambda_j:=\frac{1}{d^{p-1}}(f_j)_*:\Cpk\to \Cpk.$$
We also set $$\Lambda^ {j}:=\Lambda_{j-1}\circ \Lambda_{j-2} \circ \dots \circ \Lambda_{0}$$ for $j\geq 1$ with the convention that $\Lambda^{0}=id.$ 
By Theorem \ref{DSq}
there exists smooth negative $(p-1,p-1)$ currents $U_{L_{j}(\omp)}$ and $C<0$ independent of $j$ such that  
$$dd^cU_{L_{j}(\omp)}:=\frac{1}{d^p}f_j^*\omp-\omp.$$
and $$C\leq m_j:=\la U_{L_{j}(\omp)},\ompk\ra \leq0$$ for every $j\geq 0.$ Let $\U_{L_j(\omp)}$ be the super-potential of $L_j(\omp)$ of mean $m_j.$  It follows from Proposition \ref{push-pull} that 
$$\U_{\lambda,n}:=\frac{1}{d^n}\U_{T_n}\circ \Lambda^{n}+\sum_{j=0}^{n-1}\frac{1}{d^j} \U_{L_{j}(\omp)}\circ \Lambda^{j}$$ 
is a super-potential of $d^{-pn}(F_{\lambda,n})^*T_n$ on the smooth forms in $\Cpk.$ The first term converges to zero by assumption $||\U_{T_n}||_{\infty}=o(d^n)$. Hence, $\U_{\lambda,n}$ converges to 
$$\U_{T_p(\lambda)}:=\sum_{j=0}^{\infty}\frac{1}{d^j}\U_{L_{j}(\omp)}\circ \Lambda^{j}.$$ 
on the smooth forms in $\Cpk.$ By \cite[Corollary 3.2.7]{DS11}, it is enough to show that $\U_{T_p(\lambda)}$ is not identically $-\infty.$ To this end, it is sufficient to prove that the sequence of means $\U_{\lambda,n}(\ompk)$ is bounded from below.\\ \indent
 Now, since $\Lambda_j(\ompk)$ is a positive closed bidegree $(k-p+1,k-p+1)$ current of mass one, we may write it as $$\Lambda_{j}(\ompk)=\ompk+dd^cR_{j}$$ where $R_{j}$ is a negative bidegree $(k-p,k-p)$ current given by Theorem \ref{DSq} and its mean satisfies $$M\leq c_j:=\la R_j,\omp\ra \leq 0$$ for some constant $M<0$ independent of $j.$  
  
 Note that for $f_j\in \Hd$ the operator $\Lambda_j$ can be continuously extended to set of  negative bidegree $(k-p,k-p)$ currents $R$ such that $dd^cR\geq -\ompk.$ Moreover, 
 \begin{eqnarray*}
 \la \Lambda_{j}(R),\omp\ra &=& \la R, \frac{1}{d^{p-1}}f_{j}^*\omp\ra\\
&=&  d\la R,\omp\ra +d\la R,dd^cU_{L_j(\omp)}\ra\\
&=& d\la R,\omp\ra +d\la U_{L_j(\omp)},dd^cR\ra.
 \end{eqnarray*}
Then by Lemma \ref{pull} below, the norm $\|U_{L_j(\omp)}\|_{\mathscr{C}^{\alpha}}$ is bounded by $C_{\alpha} dist(f_j,\mathcal{M})^{-q}$ for some constants $C_{\alpha}>0$ and $q\geq 1$ independent of $f_j.$ This implies that there exists $C_1>0$ independent of $j$ such that
 $$ U_{L_j(\omp)}+C_1dist(f_j,\mathcal{M})^{-q}\omega^{p-1}\geq 0$$ in the sense of currents for sufficiently large $j$ say $j\geq j_{\lambda}.$ 
Hence, we infer that
 $$ \la \Lambda_{j}(R),\omp\ra\geq d\la R,\omp\ra -C_2de^{\epsilon q j}$$ where $C_2>0$ independent of $j.$ Now, writing 
$$\Lambda^n(\ompk)=\ompk+dd^cS_n$$
where $S_n=\sum^{n-2}_{i=0}\Lambda_{n-1}\circ \dots \circ \Lambda_{i+1}(R_{i})+R_{n-1}$ from above estimate we deduce that $S_n$ is a decreasing sequence of negative bidegree $(k-p,k-p)$ currents 
  such that 
  \begin{eqnarray*}
  \frac{1}{d^n}\la S_n,\omp\ra &\geq& \sum_{j=0}^{n-1}d^{-j}\la R_j,\omp\ra-d^{-n+1}\sum_{j=1}^{n-1}je^{\epsilon qj}\\
&\geq&   \frac{dM}{d-1}- O(\frac{e^{2\epsilon q(n-1)}}{d^{n-1}})
  \end{eqnarray*}
thus, choosing $0<\epsilon<\frac{1}{2q}\log d$ we see that $\frac{1}{d^n}\la S_n,\omp\ra$ is bounded. Then 
\begin{eqnarray*}
\U_{L_{j}(\omp)}\circ \Lambda^{j}(\ompk) &=& \U_{L_{j}(\omp)}(\ompk+dd^cS_{j})\\
&=&m_j+\la U_{L_{j}(\omp)},dd^cS_{j} \ra
\end{eqnarray*}
and since $U_{L_{j}(\omp)}$ is smooth we have
\begin{eqnarray} \label{e1}
\la U_{L_{j}(\omp)},dd^cS_{j} \ra &=& \la dd^c U_{L_{j}(\omp)},S_{j}\ra \\
&=& \la\frac{1}{d^p} (f_{j})^*\omp-\omp,S_{j}\ra 
\end{eqnarray}
Then from $S_{j+1}=\frac{1}{d^{p-1}}(f_j)_*S_j+R_j$ we infer that
\begin{equation}\label{e2}
\la \frac{1}{d^p}(f_j)^*\omp,S_j\ra=\frac{1}{d}\la \omp, S_{j+1}-R_j\ra
\end{equation}
Since $R_j$ is a negative current combining (\ref{e1}) and (\ref{e2}) we obtain  
 \begin{eqnarray*}
\U_{L_{j}(\omp)}\circ \Lambda^{j}(\ompk)
 &=& m_j-\frac{1}{d}\la R_j,\omp \ra + \la\frac{1}{d} S_{j+1}-S_{j},\omp\ra \\
& \geq & m_j+\la \frac{1}{d} S_{j+1}-S_{j},\omp\ra
\end{eqnarray*}
Hence,
\begin{eqnarray*}
\U_{\lambda,n}(\ompk)\geq \sum_{j=0}^{n-1}\frac{m_j}{d^j}+\frac{1}{d^n}\la S_{n},\omp\ra
\end{eqnarray*}
from $0\geq m_j \geq C$ we deduce that
$$\U_{\lambda,n}(\ompk)\geq C\frac{d}{d-1} + \frac{1}{d^n}\la S_{n},\omp\ra.$$
Since the last term is bounded the first assertion follows. \\ \indent
Note that the super-potential $\U_{\theta(\lambda),n}$ of $\{d^{-pn}(F_{\theta(\lambda),n})^*\omp\}$ satisfies
\begin{equation}\label{superpot}
\frac{1}{d}\U_{\theta(\lambda),n}\circ \Lambda_0+\U_{L_0(\omp)}=\U_{\lambda,n+1}
\end{equation} 
on smooth forms in $\Cpk,$ then (\ref{superpot}) together with Proposition \ref{determine} implies that 
$$(f_0)^*(d^{-pn}(F_{\theta(\lambda),n})^*\omp)=d^{-pn}(F_{\lambda,n+1})^*\omp.$$
Since both sequences are convergent and $\frac{1}{d^p}f_0^*$ is continuous on $\Cp$ passing to the limit we see that $$f_0^*(T_p(\theta(\lambda))=d^pT_p(\lambda).$$
\end{proof}
In the sequel, we will show that super potentials of $T_p(\lambda)$ are H\"{o}lder continuous for $1\leq p\leq k$ with respect to the $dist_{\alpha}$ for some (equivalently for all) $\alpha>0$ under the assumption that $\log dist(\cdot,\mathcal{M})\in L^1_m(\pN).$ 
 
\begin{thm}\label{holder}
Assume that $\log dist(\cdot,\mathcal{M})\in L^1_m(\pN).$ Then with probability one random Green current $T_p(\lambda)$ has H\"{o}lder continuous super-potentials.
\end{thm}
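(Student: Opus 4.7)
My plan is to establish H\"older continuity of $\U_{T_p(\lambda)}$ directly from the series representation of the super-potential derived in the proof of Theorem~\ref{main}. Writing $f_j=\lambda(j)$, $L_j=d^{-p}f_j^{*}$, $\Lambda_j=d^{-(p-1)}(f_j)_{*}$ and $\Lambda^{j}=\Lambda_{j-1}\circ\cdots\circ\Lambda_{0}$, one has on smooth test currents $R\in\Cpk$
\[
\U_{T_p(\lambda)}(R)=\sum_{j=0}^{\infty}\frac{1}{d^{j}}\la U_{L_j(\omp)},\Lambda^{j}R\ra,
\]
where each $U_{L_j(\omp)}$ is smooth. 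Lemma~\ref{pull} supplies the $\mathscr{C}^{\alpha}$ estimate $\|U_{L_j(\omp)}\|_{\mathscr{C}^{\alpha}}\leq C_{\alpha}\,dist(f_j,\mathcal{M})^{-q}$, and standard estimates for the pull-back of $\mathscr{C}^{\alpha}$-test forms by a holomorphic map of degree $d$ yield an operator-norm bound $dist_{\alpha}(\Lambda_i S,\Lambda_i S')\leq D_{\alpha}\,dist(f_i,\mathcal{M})^{-r}\,dist_{\alpha}(S,S')$ on $\Cpk$, for some exponent $r=r(\alpha,p,k)$ and a constant $D_{\alpha}=D_{\alpha}(p,k,d)$.

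For smooth $R,R'\in \Cpk$ set $\epsilon := dist_{\alpha}(R,R')$. Each term of the series admits two complementary bounds: a Lipschitz-type one,
\[
\Bigl|\frac{1}{d^{j}}\la U_{L_j(\omp)},\Lambda^{j}R-\Lambda^{j}R'\ra\Bigr|\leq \frac{1}{d^{j}}\|U_{L_j(\omp)}\|_{\mathscr{C}^{\alpha}}\prod_{i=0}^{j-1}\bigl(D_{\alpha}\,dist(f_i,\mathcal{M})^{-r}\bigr)\,\epsilon,
\]
and the trivial one $\tfrac{2}{d^{j}}\|U_{L_j(\omp)}\|_{\mathscr{C}^{0}}$. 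The hypothesis $\log dist(\cdot,\mathcal{M})\in L^{1}_{m}(\pN)$ enters through two $\p$-almost-sure consequences of the i.i.d.\ nature of $(X_j)$: Birkhoff's law of large numbers gives $\sum_{i<j}|\log dist(f_i,\mathcal{M})|\leq (E+1)j$ for $j$ large, where $E:=\int|\log dist(\cdot,\mathcal{M})|\,dm$, and the Borel--Cantelli lemma gives, for every $\eta>0$, $|\log dist(f_j,\mathcal{M})|\leq \eta j$ for $j$ large. Hence there are $\p$-a.s.\ finite random constants $B(\lambda),P(\lambda)$ such that $dist(f_j,\mathcal{M})^{-q}\leq B(\lambda)e^{q\eta j}$ and $\prod_{i<j}dist(f_i,\mathcal{M})^{-r}\leq P(\lambda)e^{r(E+1)j}$ for all $j\geq 0$.

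Fix $\eta>0$ small enough that $\gamma:=e^{q\eta}d^{-1}<1$, and set $A:=D_{\alpha}\,e^{q\eta+r(E+1)}d^{-1}$; both $A$ and $\gamma$ are deterministic. The Lipschitz-type bound on the $j$-th term is then at most $C\,B(\lambda)P(\lambda)\,A^{j}\epsilon$, while the trivial bound is at most $C\,B(\lambda)\,\gamma^{j}$. In the non-trivial regime $A\geq 1$, splitting the series at the cut-off $N:=\lceil \log(1/\epsilon)/\log(A/\gamma)\rceil$ and summing each piece as a geometric series yields
\[
|\U_{T_p(\lambda)}(R)-\U_{T_p(\lambda)}(R')|\leq C(\lambda)\,\epsilon^{\beta},\qquad \beta:=\frac{\log(1/\gamma)}{\log(A/\gamma)}\in(0,1),
\]
on the dense subset of smooth forms in $\Cpk$. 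The exponent $\beta$ and the form of the estimate are deterministic; only the multiplicative constant $C(\lambda)$ is random and a.s.\ finite. The estimate extends to all of $\Cpk$ by the approximation procedure for super-potentials in \cite[Prop.~3.1.6]{DS11}.

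The main obstacle is the push-forward operator-norm estimate $dist_{\alpha}(\Lambda_i S,\Lambda_i S')\leq D_{\alpha}\,dist(f_i,\mathcal{M})^{-r}\,dist_{\alpha}(S,S')$: one must quantify precisely how the $\mathscr{C}^{\alpha}$-norm of $f_i^{*}\Phi$ degenerates as $f_i$ approaches the non-holomorphic locus $\mathcal{M}$, in order to pin down the exponent $r$. Once this bound is in place, the SLLN/Borel--Cantelli inputs keep both $dist(f_j,\mathcal{M})^{-q}$ and the accumulated product $\prod_{i<j}dist(f_i,\mathcal{M})^{-r}$ at most exponential in $j$ almost surely, and the classical head/tail truncation closes the argument.
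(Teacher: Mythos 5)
Your proof is correct and relies on the same ingredients as the paper --- the series representation of $\U_{T_p(\lambda)}$, the $dist(f_j,\mathcal{M})^{-q}$-weighted $\mathscr{C}^{\alpha}$-bounds on $U_{L_j(\omp)}$ and the Lipschitz bound on $\Lambda_j$ (Lemmas~\ref{pull} and \ref{push}), and the SLLN/Borel--Cantelli control of the random constants --- but it closes the argument differently. The paper fixes in advance a small Hölder exponent $\beta$ with $e^{\epsilon q}(KM)^{\beta}<d$ and invokes Lemma~\ref{pull} in its $\beta$-Hölder form, so that each summand $d^{-j}\,\U_{L_j(\omp)}\circ\Lambda^{j}$ is already $\beta$-Hölder with geometrically summable constants; the finitely many indices below the Borel--Cantelli threshold $N(\lambda)$ are then absorbed into the random multiplicative constant. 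You instead keep only the Lipschitz estimate on each term, pair it with the crude tail bound $d^{-j}\|U_{L_j(\omp)}\|_{\mathscr{C}^{0}}$ (valid because $\Lambda^{j}R$ has mass $1$, so $|\la U,\Lambda^{j}R\ra|\lesssim\|U\|_{\mathscr{C}^{0}}$), and run the classical head/tail truncation at a cut-off optimized against $dist_{\alpha}(R,R')$. Both routes yield an exponent of the same shape $\log(1/\gamma)/\log(A/\gamma)$, with $\gamma$ controlled by the Borel--Cantelli rate and $A/\gamma$ controlled by the SLLN growth of the accumulated pull-back norms, so the outputs are essentially equivalent. The push-forward operator-norm estimate you flag as the main remaining obstacle is exactly the content of Lemma~\ref{push}, proved in the paper from the derivative bound $\|D_{(f,x)}\Psi\|\lesssim dist(f,\mathcal{M})^{-q_1}$ of \cite{DD}; once that is cited, your argument is complete.
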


In what follows, we use the same notation as in the proof of Theorem \ref{main}. We prove several lemmas which will be useful in the proof of Theorem \ref{holder}. First, we show that with probability one the maps in the tail of $\lambda$ do not get too close to $\mathcal{M}.$ More precisely, 
\begin{lem}\label{dist} If $\log dist(\cdot,\mathcal{M})\in L^1_m(\pN)$ then the set 
$$A:=\{\lambda\in \Omega: \lim_{n\to \infty}\frac{1}{n}\sum_{j=0}^{n-1}\log dist(f_j,\mathcal{M})= \int_{\pN} \log dist(f,\mathcal{M})dm(f)\}$$
has probability one. Furthermore for every $\epsilon>0$ and for $\p$-a.e. $\lambda$ there exists $n_{\epsilon}(\lambda)$ such that  
$$dist(f_j,\mathcal{M})\geq exp(-j\epsilon)$$
for every $j\geq n_{\epsilon}.$ 
\end{lem}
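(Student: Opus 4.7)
The plan is to deduce both conclusions from a single application of the Birkhoff ergodic theorem to the $\p$-ergodic shift system $(\Omega,\theta,\p)$.

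For the first assertion, I would introduce the function $g:\Omega \to \Bbb{R}\cup\{-\infty\}$ defined by $g(\lambda):=\log dist(X_0(\lambda),\mathcal{M})$. Since $\pN$ has finite diameter, $\log dist(\cdot,\mathcal{M})$ is bounded above, so together with the hypothesis that its negative part is $m$-integrable we obtain $g\in L^1(\Omega,\p)$ with
$$\int_\Omega g\,d\p=\int_{\pN}\log dist(f,\mathcal{M})\,dm(f).$$
Because $\theta$ is $\p$-ergodic and the $f_j$ are i.i.d.\ with law $m$, Birkhoff's ergodic theorem applied to $g$ yields the almost sure convergence
$$\frac{1}{n}\sum_{j=0}^{n-1}\log dist(f_j,\mathcal{M})=\frac{1}{n}\sum_{j=0}^{n-1}g(\theta^j\lambda)\longrightarrow \int_{\pN}\log dist(f,\mathcal{M})\,dm(f),$$
which is exactly the statement $\p(A)=1$.

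For the second assertion, I would deduce it directly from the first via the elementary Ces\`aro-type observation that whenever a real sequence $S_n/n$ converges to a finite limit, the increments $a_n:=S_n-S_{n-1}$ satisfy $a_n/n\to 0$; this follows at once from $a_n/n = S_n/n - \tfrac{n-1}{n}\cdot S_{n-1}/(n-1)$. Applying this with $a_j:=\log dist(f_j,\mathcal{M})$ on the probability-one set provided by the first part yields $\tfrac{1}{n}\log dist(f_n,\mathcal{M})\to 0$ almost surely. Hence, for each fixed $\epsilon>0$ and $\p$-a.e.\ $\lambda$ there is an index $n_\epsilon(\lambda)$ such that $\log dist(f_j,\mathcal{M})\geq -\epsilon j$, i.e.\ $dist(f_j,\mathcal{M})\geq \exp(-\epsilon j)$, for every $j\geq n_\epsilon(\lambda)$.

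There is no real obstacle here; the argument is mechanical once $g\in L^1(\p)$ is checked. A slightly less economical alternative for the second part is the first Borel--Cantelli lemma applied to the events $\{dist(f_j,\mathcal{M})<e^{-\epsilon j}\}$, whose probabilities are summable by the layer-cake bound $\sum_{j\geq 1}m(\{-\log dist(\cdot,\mathcal{M})>\epsilon j\})\leq \epsilon^{-1}\|\log dist(\cdot,\mathcal{M})\|_{L^1_m}$; either route suffices.
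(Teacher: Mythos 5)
Your proof is correct. For the first assertion you invoke Birkhoff's ergodic theorem applied to the shift $(\Omega,\theta,\p)$ and the observable $g(\lambda)=\log dist(X_0(\lambda),\mathcal{M})$, while the paper invokes the strong law of large numbers for the i.i.d.\ sequence $\mathcal{X}_j(\lambda)=\log dist(f_j,\mathcal{M})$; since the $f_j$ are i.i.d., these are essentially the same statement and neither route requires more than $g\in L^1$. For the second assertion your route is genuinely different and more economical: you deduce $\tfrac{1}{n}\log dist(f_n,\mathcal{M})\to 0$ a.s.\ as an immediate Ces\`aro consequence of the SLLN from part one, whereas the paper proves the statement independently via Borel--Cantelli applied to the events $B_j=\{dist(f_j,\mathcal{M})<e^{-j\epsilon}\}$, using a layer-cake estimate to show $\sum_j\p(B_j)<\infty$. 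The Ces\`aro argument is shorter and requires nothing beyond the convergence of averages already established; the Borel--Cantelli argument is self-contained (it does not pass through the SLLN) and is the kind of direct tail estimate one might reuse elsewhere. One small slip worth noting in your Ces\`aro step: with $S_n=\sum_{j=0}^{n-1}a_j$ the increment $S_n-S_{n-1}$ equals $a_{n-1}$, not $a_n$, so the indices should be shifted by one; this is cosmetic and does not affect the conclusion. Your parenthetical layer-cake bound $\sum_{j\geq 1}m(\{-\log dist>\epsilon j\})\leq \epsilon^{-1}\|\log dist\|_{L^1_m}$ is in fact the correct quantitative form (the paper's displayed inequality $\int\geq\sum$ implicitly needs the factor $\epsilon^{-1}$ to be literally true for small $\epsilon$, though summability of course still holds).
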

\begin{proof}

We define the random variables $$\mathcal{X}_j(\lambda):=\log dist(f_j,\mathcal{M}).$$ Note that $\mathcal{X}_j$'s are independent, identically distributed sequence random variables with finite mean. 
Thus, it follows from strong law of large numbers (see \cite[Theorem 22.1]{Bil}) that  with probability one, $\frac{1}{n}\sum_{j=0}^{n-1}\mathcal{X}_j$ converges to the mean of $\mathcal{X}_0$ namely, $E(\mathcal{X}_0)=\int_{\pN}\log dist(f,\mathcal{M})dm(f).$ This proves the first assertion. \\ \indent
 To prove the second assertion, let $\epsilon>0$ be small and define
$$B_j:=\{\lambda \in \Omega: dist(f_j,\mathcal{M})< e^{-j\epsilon}\}$$
Note that $B_j$'s are independent events. Since $ \log dist(\cdot,\mathcal{M}) \in L^1_m(\pN)$ and $f_j's$ are i.i.d. for every $\epsilon>0$ the sum $\sum_{j=1}^{\infty} \p(B_j)$
converges.
Indeed, since $\p$ is the product measure we have 
\begin{eqnarray*}
\int_{\pN}-\log dist(f,\mathcal{M})dm & = & \int_0^{\infty}m\{f\in \pN:\log dist(f,\mathcal{M})<-t\}dt \\
& \geq & \sum_{j=0}^{\infty}m\{f\in \pN:\log dist(f,\mathcal{M})<-j\epsilon\} \\
& = & \sum_{j=0}^{\infty} \p\{\lambda\in \Omega:dist(f_j,\mathcal{M})<e^{-j\epsilon}\}\\
& = &  \sum_{j=0}^{\infty} \p(B_j) .
\end{eqnarray*}  
 Thus, by Borel-Cantelli Lemma  \cite{Bil} we have $$\p(\limsup_{j\to \infty} B_j)=0$$  where
$$\limsup_{j\to\infty} B_j=\cap_{j=1}^{\infty}\cup_{k=j}^{\infty}B_k.$$ 

\end{proof}
 Note that the set $A$ is invariant under the shift $\theta:\Omega\to \Omega.$ Moreover, if $\lambda\in A$ then $f_j\in \Hd$ for every $j\geq 0.$ 

Next, we observe that $\Lambda_j=\frac{1}{d^{p-1}}(f_j)_*$ is Lipschitz on $\Cpk$ with respect to the distance $dist_{\alpha}.$ In the sequel we fix $\alpha\geq 1.$  We utilize some arguments from \cite[Lemma 5.4.3]{DS11} and \cite[Proposition 3]{dTh1}:
\begin{lem}\label{push} Let $f_j\in \Hd$ then there exists constants $K>0, q\geq 1$ independent of $f_j$  such that
$$dist_{\alpha}(\Lambda_j(R),\Lambda_j(R'))\leq K dist(f_j,\mathcal{M})^{-q} dist_{\alpha}(R,R')$$
for every $R,R'\in \Cpk.$
\end{lem}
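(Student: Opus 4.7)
The plan is to exploit the duality between push-forward and pull-back to reduce the claim to a quantitative $\mathscr{C}^\alpha$-estimate on $f_j^*\Phi$, and then to control that estimate in terms of $dist(f_j,\mathcal{M})$. Unwinding the definition of $dist_\alpha$: for every smooth $(p-1,p-1)$ test form $\Phi$ with $\|\Phi\|_{\mathscr{C}^\alpha}\leq 1$, the identity $\la \Lambda_j(R)-\Lambda_j(R'),\Phi\ra = d^{-(p-1)}\la R-R',f_j^*\Phi\ra$, combined with the trivial bound $|\la R-R',f_j^*\Phi\ra|\leq \|f_j^*\Phi\|_{\mathscr{C}^\alpha}\, dist_\alpha(R,R')$, reduces the lemma to proving
$$\|f_j^*\Phi\|_{\mathscr{C}^\alpha}\leq K'\, dist(f_j,\mathcal{M})^{-q}\,\|\Phi\|_{\mathscr{C}^\alpha}$$
for constants $K',q$ independent of $f_j\in \Hd$; absorbing the factor $d^{-(p-1)}$ into $K$ at the end gives the stated constant.

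The heart of the argument is then bounding $\|f_j\|_{\mathscr{C}^\beta}$ for some integer $\beta\geq\lceil\alpha\rceil+1$ by a power of $dist(f_j,\mathcal{M})^{-1}$. Writing $f_j=[F_0:\cdots:F_k]$ with the coefficients of the $F_i$ suitably normalized, the hypersurface $\mathcal{M}\subset\pN$ is cut out (up to irreducible components) by the multi-variable resultant $\mathrm{Res}(F_0,\ldots,F_k)$, so a standard \L{}ojasiewicz inequality on $\pN$ gives $|\mathrm{Res}(f_j)|\geq c\, dist(f_j,\mathcal{M})^s$ for some $c,s>0$. In any fixed finite atlas of $\pk$, the derivatives of $f_j$ up to order $\beta$ are rational expressions in the coefficients of the $F_i$ whose denominators are positive powers of $\max_i|F_i|$; and a uniform lower bound for $\max_i|F_i|$ over $\pk$ is in turn controlled by a positive power of $|\mathrm{Res}(f_j)|$. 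Chaining these two inequalities yields $\|f_j\|_{\mathscr{C}^\beta}\leq C\, dist(f_j,\mathcal{M})^{-q''}$ with $C,q''>0$ depending only on $k,d,\alpha$.

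Since $f_j^*\Phi$ is locally a finite sum of terms of the form $(\Phi_{I,J}\circ f_j)\,\partial f_j^{i_1}\wedge\cdots\wedge\overline\partial f_j^{j_{p-1}}$, the chain and Leibniz rules---supplemented by interpolation between H\"{o}lder spaces when $\alpha\notin\Bbb{N}$---give $\|f_j^*\Phi\|_{\mathscr{C}^\alpha}\leq C'(1+\|f_j\|_{\mathscr{C}^\beta})^M\,\|\Phi\|_{\mathscr{C}^\alpha}$ for some exponent $M=M(\alpha,p,k)$ (essentially $M=\lceil\alpha\rceil + 2(p-1)$). Combining this with the previous paragraph produces the reduction target of the first paragraph with new exponent $q=Mq''$, completing the plan.

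The main obstacle is the \L{}ojasiewicz-type comparison $\|f_j\|_{\mathscr{C}^\beta}\leq C\, dist(f_j,\mathcal{M})^{-q''}$: it is the geometric core of the lemma and reflects the fact that the derivatives of $f_j$ in affine charts blow up precisely as the components $F_i$ approach having a common zero on $\mathbb{C}^{k+1}\setminus\{0\}$. This quantitative comparison is essentially the content of \cite[Lem.~5.4.3]{DS11} and \cite[Prop.~3]{dTh1} and can be adapted directly to the present setting; the rest of the argument is then routine.
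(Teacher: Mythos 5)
Your proof follows the paper's exact structure: reduce by the duality $\la \Lambda_j(R-R'),\Phi\ra = d^{1-p}\la R-R',f_j^*\Phi\ra$ to the quantitative bound $\|f_j^*\Phi\|_{\mathscr{C}^{\alpha}}\lesssim dist(f_j,\mathcal{M})^{-q}\|\Phi\|_{\mathscr{C}^{\alpha}}$, and then control the derivatives of $f_j$ by a negative power of the distance to $\mathcal{M}$. The paper obtains the latter control by citing a Łojasiewicz-type estimate on the derivative of the evaluation map $\Psi(g,x)=g(x)$ from \cite[Lemma 2.1]{DD}, whereas you re-derive the same geometric fact by sketching a resultant/Łojasiewicz argument directly; this is a difference in level of detail, not in approach, and the remaining chain-rule step is the same as the paper's display for $f_j^*\Phi$ in local coordinates.
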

\begin{proof}
First, we prove that there exists constants $\rho_{\alpha}>0$ and $q\geq 1$ independent of $f_j$ such that
$$\|f_j^*\Phi\|_{\mathscr{C}^{\alpha}}\leq \rho_{\alpha} dist(f_j,\mathcal{M})^{-q}\|\Phi\|_{\mathscr{C}^{\alpha}}$$
for every $(p-1,p-1)$ smooth form  $\Phi$ on $\pk$ satisfying $\|\Phi\|_{\mathscr{C}^{\alpha}}\leq 1.$
 Indeed, we consider the meromorphic map 
$$\Psi:\pN\times \pk\to \pk$$  
$$\Psi(g,x)=g(x).$$
 By \cite[Lemma 2.1]{DD} there exists $C>0$ and $q_1\geq 1$ such that 
$$\|D_xf_j\|\leq \|D_{(f_j,x)}\Psi\|\leq C dist(f_j,\mathcal{M})^{-q_1}$$
for every $f_j\in \Hd$ and $x\in \pk.$\\ \indent
Note that $\|\Phi\|_{\mathscr{C}^{\alpha}}$ is the sum of $\mathscr{C}^{\alpha}$-norms of the coefficients in a fixed atlas. In local coordinates we may write $\Phi=\sum \phi_{IJ}dz_I\wedge d\overline{z_J}$.  Then
$$f_j^*\Phi=\sum \phi_{IJ}\circ f_j\ df_I\wedge d\overline{f_J}$$ and we infer that
\begin{equation}\label{fix}
\|f_j^*\Phi\|_{\mathscr{C}^{\alpha}}\leq \rho_{\alpha}dist(f_j,\mathcal{M})^{-q}\|\Phi\|_{\mathscr{C}^{\alpha}}\end{equation}
 for some $\rho_{\alpha}>0$ and $q\geq 1.$\\ \indent
Now, since
$$|\la\Lambda_j(R-R'),\Phi\ra|=d^{1-p}|\la R-R',f_j^*\Phi\ra|$$
the assertion follows.
 
\end{proof}
Note that a similar reasoning as in Lemma \ref{push} implies that $\U_{L_j(\omp)}$ is $\beta$-H\"{o}lder continuous for $0<\beta\leq 1$ with respect to the distance $dist_{\alpha}.$ More precisely: 
\begin{lem}\label{pull}
For $f_j\in \Hd$ and every  $0<\beta\leq 1$ there exists constants $r_{\alpha}>0$ and $q\geq 1$ independent of $f_j$ such that   $$|\U_{L_j(\omp)}(R)-\U_{L_j(\omp)}(R')|\leq r_{\alpha}dist(f_j,\mathcal{M})^{-q} dist_{\alpha}(R,R')^{\beta}$$
for every $R,R'\in \Cpk.$
\end{lem}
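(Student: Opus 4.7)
The main idea is to exploit that, since $f_j\in\Hd$, the form $L_j(\omp)=d^{-p}f_j^{*}\omp$ is smooth, and so the quasi-potential
$$U_{L_j(\omp)}(z)=\int L_j(\omp)(\zeta)\wedge K(z,\zeta)$$
supplied by Theorem \ref{DSq} (shifted to have mean $m_j$) is itself a smooth $(p-1,p-1)$ form. By the very definition (\ref{super}), $\U_{L_j(\omp)}(R)=\la U_{L_j(\omp)},R\ra$ for any smooth $R\in\Cpk$, and because $U_{L_j(\omp)}$ is smooth, this identity extends to arbitrary $R\in\Cpk$ by the approximation procedure of \cite[Proposition 3.1.6]{DS11}.

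With this representation in hand, for every $R,R'\in\Cpk$ one obtains immediately
$$|\U_{L_j(\omp)}(R)-\U_{L_j(\omp)}(R')|=|\la U_{L_j(\omp)},R-R'\ra|\leq \|U_{L_j(\omp)}\|_{\mathscr{C}^{\alpha}}\,\mathrm{dist}_{\alpha}(R,R').$$
Thus the whole lemma reduces to proving the polynomial-loss estimate
$$\|U_{L_j(\omp)}\|_{\mathscr{C}^{\alpha}}\leq C_{\alpha}\,\mathrm{dist}(f_j,\mathcal{M})^{-q}. \qquad (\star)$$
For $(\star)$ the plan is to differentiate the kernel representation in $z$: the bounds (\ref{DSes}) show that $K(z,\zeta)$ and $\nabla_z K(z,\zeta)$ have locally integrable singularities along the diagonal when $p\leq k$, and this allows one to control $\|U_{L_j(\omp)}\|_{\mathscr{C}^{\alpha}}$ by a universal multiple of $\|L_j(\omp)\|_{\mathscr{C}^{\alpha}}$. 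The latter is then treated exactly as in the proof of Lemma \ref{push}: writing $\omp=\sum\phi_{IJ}\,dz_I\wedge d\bar z_J$ in local charts, the pull-back $f_j^{*}\omp$ involves products of up to $2p$ derivatives of the components of $f_j$, each dominated by $\|D_xf_j\|\leq C\,\mathrm{dist}(f_j,\mathcal{M})^{-q_1}$ via \cite[Lem. 2.1]{DD}, so that $\|L_j(\omp)\|_{\mathscr{C}^{\alpha}}\leq C'_{\alpha}\,\mathrm{dist}(f_j,\mathcal{M})^{-q}$ with a suitable $q\geq 1$.

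The extension from $\beta=1$ to arbitrary $0<\beta\leq 1$ is essentially free: since $\Cpk$ is compact, $\mathrm{dist}_{\alpha}$ is bounded there by some universal $D$, and hence
$$\mathrm{dist}_{\alpha}(R,R')=\mathrm{dist}_{\alpha}(R,R')^{\beta}\mathrm{dist}_{\alpha}(R,R')^{1-\beta}\leq D^{1-\beta}\mathrm{dist}_{\alpha}(R,R')^{\beta},$$
and the factor $D^{1-\beta}$ is absorbed into $r_{\alpha}$.

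\textbf{Main obstacle.} The crux is the estimate $(\star)$. The parametrix bound on the kernel side is standard, but one has to check carefully that taking $\alpha$ derivatives of the kernel integral is indeed permissible (this is where the restriction $p\leq k$ makes the kernel bounds in (\ref{DSes}) borderline integrable), and that the polynomial dependence on $\mathrm{dist}(f_j,\mathcal{M})^{-1}$ coming from the local-coordinate expression for $f_j^{*}\omp$ is not further amplified when passing from the pointwise bound on $L_j(\omp)$ to a Hölder bound on $U_{L_j(\omp)}$. Once $(\star)$ is in place, the remainder of the argument is formal.
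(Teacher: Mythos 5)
Your proposal follows the same route as the paper: both reduce the lemma to the single estimate $(\star)$ that $\|U_{L_j(\omp)}\|_{\mathscr{C}^{\alpha}}\lesssim \mathrm{dist}(f_j,\mathcal{M})^{-q}$, derive it from the kernel representation of Theorem~\ref{DSq} together with the estimates~(\ref{DSes}) and the pull-back bound~(\ref{fix}) established in the proof of Lemma~\ref{push}, and then conclude by the pairing identity $\U_{L_j(\omp)}(R)-\U_{L_j(\omp)}(R')=\la U_{L_j(\omp)},R-R'\ra$, valid since $U_{L_j(\omp)}$ is smooth. Your explicit reduction of the $\beta<1$ case to the Lipschitz case via boundedness of $\mathrm{dist}_{\alpha}$ on the compact set $\Cpk$ is correct and is left implicit in the paper; the only minor caveat is that your claim that the kernel bounds yield $\|U_{L_j(\omp)}\|_{\mathscr{C}^{\alpha}}\lesssim\|L_j(\omp)\|_{\mathscr{C}^{\alpha}}$ deserves (for $\alpha>1$) a word about integrating by parts to move excess derivatives off the kernel, since~(\ref{DSes}) only controls $K$ and $\nabla K$, but this is no less precise than the paper's own phrasing.
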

\begin{proof}
It follows from Theorem \ref{DSq} that there exists a negative smooth $(p-1,p-1)$ form $U_{L_j(\omp)}$ of mean $m_j$ such that
$$L_j(\omp)=\omega^p+dd^cU_{L_j(\omp)}.$$
the form $U_{L_j(\omp)}$ is defined by 
$$U_{L_j(\omp)}(z)=\int_{\zeta\not=z}L_j\omp\wedge K(z,\zeta)$$
where $K(z,\zeta)$ is the negative kernel on $\pk\times\pk$. Applying the argument in the proof of Lemma \ref{push} and using the estimates (\ref{DSes}) on the kernel $K(z,\zeta)$ we see that $\|U_{L_j(\omp)}\|_{\mathscr{C}^{\alpha}}$ is bounded by $r_{\alpha} dist(f_j,\mathcal{M})^{-q}$ for some constants $r_{\alpha}>0$ and $q\geq 1$ independent of $f_j$.
Now, since $\U_{L_j(\omp)}$ denotes the super-potential of $L_j(\omp)$ of mean $m_j$ we have
$$|\U_{L_j(\omp)}(R)-\U_{L_j(\omp)}(R')|=|\la U_{L_j(\omp)},R-R'\ra|$$
and the result follows.
\end{proof}
 In the sequel, we replace $A$ in Lemma \ref{dist} by $\mathscr{A}\cap A$ and with some abuse of notation still denote by $\mathscr{A}$ which is clearly $\theta$-invariant and has probability one. Now, we fix $\lambda \in \mathscr{A}$ and we will show that $T_p(\lambda)$ has H\"{o}lder continuous super-potential with respect to the $dist_{\alpha}$.  Our approach is similar to that of \cite{DS11}.
\begin{proof}[Proof of Theorem \ref{holder}]
Let $\epsilon>0$ small, by Lemma \ref{dist}  for a.e. $\lambda\in \Omega$ there exists $N=N(\lambda)$ such that $dist(f_j,\mathcal{M})\geq e^{-\epsilon j}$ for $j\geq N$. Moreover, $$\prod_{i=0}^{j-1}dist(f_j,\mathcal{M})^{-q}\lesssim M^{j}$$ where $M:=e^{-q \la m,\log dist(\cdot,\mathcal{M})\ra}.$  Let $0<\beta<\frac{\log d-\epsilon q}{\max(1,\log(KM))}$ where $K>0$ is given by Lemma \ref{push}. Recall that $$\U_{T_p(\lambda)}=\sum_{j=0}^{\infty}d^{-j}\U_{L_j(\omp)}\circ \Lambda^j$$ on smooth forms in $\Cpk$. Then applying Lemma \ref{pull} and Lemma \ref{push} respectively we obtain
\begin{eqnarray*}
|\U_{T_p(\lambda)}(R)-\U_{T_p(\lambda)}(R')| &\leq & C_1   \sum_{j=0}^{\infty}d^{-j}dist(f_j,\mathcal{M})^{-q} dist_{\alpha}(\Lambda^j(R),\Lambda^j(R'))^{\beta} \\ 
& \leq & C_1
\big( \sum_{j=0}^{N}d^{-j}dist(f_j,\mathcal{M})^{-q}(KM)^{\beta j} + \sum_{j=N+1}^{\infty}(\frac{e^{\epsilon q} (KM)^{\beta}}{d})^j\big) dist_{\alpha}(R,R')^{\beta} \\
& \leq & C_2(C_{N(\lambda)} + (\frac{e^{\epsilon q}(KM)^{\beta}}{d})^{-N}) dist_{\alpha}(R,R')^{\beta}  
\end{eqnarray*}
where $C_1,C_2>0$ constants independent of $N=N(\lambda)$. Hence, we conclude that super-potential $\U_{T_p(\lambda)}$ is $\beta$-H\"{o}lder continuous with $\beta<\frac{\log d}{\max(1,\log(KM))}$.
\end{proof}
\begin{rem}\label{uniform}
Note that for $\lambda\in \mathscr{A}$ fixed, $$\U_{T^p(\theta^n(\lambda))}=\sum_{j=n}^{\infty}d^{-j}\U_{L_j(\omp)}\circ \Lambda_{j-1}\circ \dots \circ \Lambda_n$$
for $n\geq 0.$ Thus, the argument in the proof of Theorem \ref{holder} implies that super-potentials of random Green currents satisfy a uniform H\"{o}lder estimate in the following sense
$$|\U_{T_p(\theta^n(\lambda))}(R)-\U_{T_p(\theta^n(\lambda))}(R')|\leq C_{\lambda} dist_{\alpha}(R,R')^{\beta}$$ for $n\geq 0$ where $C_{\lambda}$ does not depend on $n.$ Furthermore, if $\log dist(\cdot,\mathcal{M})$ is bounded, i.e. supp$(m)\subset \Hd$ then with probability one we can obtain the same estimate where $C_{\lambda}$ replaced with $C>0$ which does not depend on $\lambda.$
\end{rem}
 As a consequence of Theorem \ref{moderate}, we have the following uniform estimate which will be useful in the sequel. Let $\mathcal{K}$ denote a compact family of qpsh functions.
\begin{cor}\label{mod}
 The current $T_p(\lambda)$ is moderate with probability one. In particular, for $p=k$ and $\lambda\in \mathscr{A}$ there exists constants  $\rho(\lambda),c(\lambda)>0$ such that 
$$\int_{\pk}e^{-\rho\varphi}d\mu_{\theta^n(\lambda)} \leq c$$  for every $n\in \Bbb{N}$ and  $\varphi\in \mathcal{K}.$ 
\end{cor}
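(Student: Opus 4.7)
The strategy is to read off both assertions from earlier results, with the only nontrivial point being the uniformity in $n$ of the constants in the second assertion. For the first statement, I would combine Theorem \ref{holder} (with probability one $T_p(\lambda)$ has H\"{o}lder continuous super-potentials) with Theorem \ref{moderate} (H\"{o}lder continuous super-potentials yield a moderate current). Each is a direct application, and together they immediately give that $T_p(\lambda)$ is moderate almost surely.

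For the uniform moderate estimate in the case $p = k$, the key observation is that the constants $\rho, c$ in (\ref{modest}) furnished by Theorem \ref{moderate} depend only on $\pk$, on the compact family $\mathcal{K}$, and on the H\"{o}lder constant $C$ and exponent $\beta$ of the super-potential. Hence I would reduce the problem to producing $C(\lambda), \beta(\lambda)$ such that, for $\lambda \in \mathscr{A}$ fixed,
$$|\U_{T_k(\theta^n(\lambda))}(R) - \U_{T_k(\theta^n(\lambda))}(R')| \leq C(\lambda)\, dist_{\alpha}(R, R')^{\beta(\lambda)}$$
holds uniformly in $n \geq 0$. This uniformity is precisely the content of Remark \ref{uniform}: expanding
$$\U_{T_k(\theta^n(\lambda))} = \sum_{j = n}^{\infty} d^{-j}\, \U_{L_j(\omega^k)} \circ \Lambda_{j-1} \circ \dots \circ \Lambda_n$$
and repeating the H\"{o}lder argument from the proof of Theorem \ref{holder} with Lemmas \ref{pull} and \ref{push}, the distance estimate of Lemma \ref{dist} controls $\prod_{i = n}^{j-1} dist(f_i, \mathcal{M})^{-q}$ by a factor of moderate exponential growth that is summable against $d^{-j}$; the resulting H\"{o}lder constant depends on $\lambda$ only through $N(\lambda)$ and in particular is independent of the shift $n$.

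Finally, since for $p = k$ the current $T_k(\theta^n(\lambda)) = \mu_{\theta^n(\lambda)}$ is a probability measure and $\omega^{k-p}$ is trivial, the moderate estimate (\ref{modest}) reads
$$\int_{\pk} e^{-\rho \varphi}\, d\mu_{\theta^n(\lambda)} \leq c$$
for all $\varphi \in \mathcal{K}$, with $\rho = \rho(\lambda)$ and $c = c(\lambda)$ independent of $n$. The main obstacle, as I see it, is not computational but a matter of careful bookkeeping: one must check that the dependence of constants stated in Theorem \ref{moderate} really does allow the H\"{o}lder constants from Remark \ref{uniform} to propagate to a uniform-in-$n$ moderate bound, so that the $\lambda$-dependence is preserved while the $n$-dependence drops out. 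Once this tracking is in place, the corollary follows at once.
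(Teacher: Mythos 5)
Your proposal is correct and follows the paper's intended route exactly: Theorem \ref{holder} plus Theorem \ref{moderate} give moderateness almost surely, and the uniform-in-$n$ estimate for $p=k$ comes from feeding the $n$-uniform H\"older bound of Remark \ref{uniform} into Theorem \ref{moderate}, whose constants depend only on $\pk$, $\mathcal{K}$, the H\"older constant $C$, and the exponent $\beta$ (the latter being independent of $n$ since it is chosen in terms of $d$, $K$, $M$, $\epsilon$, $q$ alone). The paper states the corollary as an immediate consequence of Theorem \ref{moderate} without writing out these details, and your bookkeeping is precisely what was left implicit.
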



 
  


\section{Ergodic Properties of $(X,\mathscr{B},\mu,\tau)$} \label{dsh}
Let $X:=\Omega\times\pk$ and $\tau:X\to X$ be the skew product defined in the introduction. In the previous section we proved that there exists a Borel set $\mathscr{A}\subset \Hd^{\Bbb{N}} $ which is invariant under the shift operator and of probability one such that for every $\lambda\in \mathscr{A}$ the sequence $d^{-kn}F_{\lambda,n}^*\omega^k$ converges weakly to a probability measure which we denote by $\mu_{\lambda}.$ The measure $\mu_{\lambda}$ has H\"older continuous super-potentials with probability one. Moreover, by the invariance property (\ref{inv}) we have 
$$f_0^*\mu_{\theta(\lambda)}=d^{-k}\mu_{\lambda}$$
for every $\lambda\in \mathscr{A}.$ Furthermore, since $d^{-k}f_*f^*= id$ on $\mathscr{C}_k$ for every $f\in \Hd$ we infer that
\begin{equation}\label{pusheq}
(f_0)_*\mu_{\lambda}=\mu_{\theta(\lambda)}.
\end{equation} 
 
Let $\chi_{\mathscr{A}}$ denote the indicator function of ${\mathscr{A}}\subset \Omega.$ Note that for every $(k-p,k-p)$ test form $\Phi$ on $\pk$ the map
\begin{equation}\label{map}
\lambda\to \chi_{\mathscr{A}}(\lambda)\la T_p(\lambda),\Phi\ra
\end{equation}
is measurable. Indeed, for each $n$ we consider the map
$$\lambda\to \chi_{\mathscr{A}}(\lambda)\la d^{-pn}F_{\lambda,n}^*\omega^p,\Phi\ra$$
which is measurable since for each $\lambda\in \mathscr{A},$ the form $F_{\lambda,n}^*\omega^p$ is smooth and its coefficients depend continuously on $\lambda$ as $\lambda$ varies in $\Hd^{\Bbb{N}}.$ Now, being limit of measurable maps (\ref{map}) defines a measurable map. Therefore, 
$$\mathcal{T}_p:=\int_{\Omega}\la T_p(\lambda),\cdot\ra d\p(\lambda)$$
defines a positive closed bidegree $(p,p)$ current on $\pk$. Let
$$ \Theta_p:\Cp\to\Cp$$
$$\Theta_p(S):=d^{-p}\int_{\pN}f^*S\ dm(f)$$
 It follows form (\ref{inv}) and $\theta$-invariance of $\p$ that the current $\mathcal{T}_p$ is invariant under the operator $\Theta_p.$ That is
 $$\Theta_p(\mathcal{T}_p)=\mathcal{T}_p.$$
We denote the top degree current by $\nu:=\mathcal{T}_k$ which is a Borel probability measure on $\pk$.\\ \indent
  Now, by above reasoning we may also define a probability measure on $X$ by 
  $$\la\mu,\varphi\ra=\int_{\Omega}\la\mu_{\lambda},\varphi\ra d\p(\lambda)$$ where $\varphi:X\to \Bbb{R}$ is a continuous function. Note that $\pi_*\mu=\nu$ where $\pi:X\to \pk$ is the projection on the second factor. Moreover, the measure $\mu$ is $\tau$-invariant. Indeed, by (\ref{pusheq}) and $\theta$-invariance of $\p$ we have 
\begin{eqnarray*}
\la\tau_*\mu,\varphi\ra=\la\mu,\varphi\circ \tau\ra &= & \int_{\Omega}\la\mu_{\lambda}(dx),\varphi(\theta(\lambda),f_0(x))\ra d\p(\lambda)\\
& = & \int_{\Omega}\la\mu_{\theta(\lambda)}(dx),\varphi(\theta(\lambda),x)\ra d\p(\lambda)\\
& = & \int_{\Omega} \la \mu_{\lambda},\varphi\ra d\p(\lambda)\\
& = & \la \mu,\varphi\ra
\end{eqnarray*}
  
  Furthermore, since $(\Omega,\p,\theta)$ is mixing, it is classical that $(X,\mu,\tau)$ is also mixing (see \cite[Proposition 4.1]{J1}). In section \ref{expo} we will show that the dynamical system $(X,\mu,\tau)$ has strong mixing properties for dsh and H\"{o}lder continuous observables.\\
 
 {\bf{DSH Functions:}}
A function $\psi\in L^1(\pk)$ is called dsh if outside a pluripolar set $\psi=\varphi_1-\varphi_2$ where $\varphi_i$  are qpsh functions. This implies that $$dd^c\psi=T^+-T^-$$ for some positive closed $(1,1)$ currents $T^{\pm}$. Two dsh functions are identified if they coincide outside a pluripolar set; we denote the set of all dsh functions by $DSH(\pk)$. Note that dsh functions are stable under pull-back and push-forward operators induced by meromorphic self-maps of $\pk$ and have good compactness properties inherited from those of qpsh functions. Following \cite{DS3} one can define a norm on $DSH(\pk)$ as follows:
$$\|\psi\|_{DSH}:=\|\psi\|_{L^1(\pk)}+\inf\|T^{\pm}\|$$
where $dd^c\psi=T^+-T^-$ and the infimum is taken over all such representations. \\ \indent
 If $\mu$ is a probability measure on $\pk$ such that all qpsh functions are $\mu$-integrable then one can define 
 $$\|\psi\|_{DSH}^{\mu}:=|\la\mu,\psi\ra|+\inf\|T^{\pm}\|$$
where $T^{\pm}$ as above.
The following proposition is proved in \cite{DS4} we state it here for convenience of the reader:

\begin{prop}\label{pro}
Let $\psi\in DSH(\pk)$ then there exists negative qpsh functions $\varphi_1,\varphi_2$ such that $\psi=\varphi_1-\varphi_2$ and $dd^c\varphi_i\geq -c\|\psi\|_{DSH}\omega$ where $c>0$ independent of $\psi$ and $\varphi_i$'s. Moreover, $|\psi|$ is also a dsh function and $\||\psi|\|_{DSH}\leq c \|\psi\|_{DSH}.$ 
\end{prop}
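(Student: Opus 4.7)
The plan is to apply the Dinh--Sibony quasi-potential construction of Theorem \ref{DSq} to each of the positive closed $(1,1)$ currents appearing in $dd^c\psi$, thereby obtaining the desired negative qpsh decomposition of $\psi$, and then to realize $|\psi|$ as a difference of two qpsh functions via the elementary identity $|\psi| = \max(2\varphi_1, 2\varphi_2) - (\varphi_1 + \varphi_2)$ combined with the stability of qpsh functions under finite maxima.

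First I would fix positive closed $(1,1)$ currents $T^{\pm}$ with $dd^c\psi = T^+ - T^-$ and $\|T^{\pm}\| \leq 2\inf\|T^{\pm}\| \leq 2\|\psi\|_{DSH}$. Since $T^+ - T^-$ is $dd^c$-exact, both masses coincide; call the common value $M$ (the case $M=0$ reduces to $\psi$ constant and is trivial). Applying Theorem \ref{DSq} to $T^{\pm}/M \in \mathscr{C}_1$ yields negative qpsh functions $u^{\pm}$ with $T^{\pm} = M\omega + dd^c(Mu^{\pm})$ and with means $|\la Mu^{\pm}, \omega^k\ra|$ controlled by $CM$. The difference $\psi - M(u^+ - u^-)$ is $dd^c$-closed on $\pk$, hence equal to a constant $c_0$ satisfying $|c_0| \leq \|\psi\|_{L^1} + 2CM \leq C'\|\psi\|_{DSH}$. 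Setting $K := \max(0, -c_0)$ and
\[ \varphi_1 := Mu^+ - K, \qquad \varphi_2 := Mu^- - c_0 - K, \]
both $\varphi_i$ are negative qpsh, $\varphi_1 - \varphi_2 = \psi$, and $dd^c\varphi_i = T^{\pm} - M\omega \geq -2\|\psi\|_{DSH}\,\omega$.

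For the assertion about $|\psi|$, the identity $|\psi| = \max(2\varphi_1, 2\varphi_2) - (\varphi_1 + \varphi_2)$ follows from $|\varphi_1 - \varphi_2| = \max(\varphi_1,\varphi_2) - \min(\varphi_1,\varphi_2)$ together with $\min = \text{sum} - \max$. Both summands on the right are qpsh with $dd^c \geq -c\|\psi\|_{DSH}\,\omega$: for the maximum this is the standard fact that if $\rho$ is a smooth local potential of $c\|\psi\|_{DSH}\,\omega$ then $2\varphi_i + \rho$ are psh, hence so is their maximum. Writing $c_1 := c\|\psi\|_{DSH}$, we have
\[ dd^c|\psi| = \bigl(dd^c\max(2\varphi_1,2\varphi_2) + c_1\omega\bigr) - \bigl(dd^c(\varphi_1+\varphi_2) + c_1\omega\bigr), \]
a difference of two positive closed $(1,1)$ currents, each of mass $c_1$ since the $dd^c$-parts contribute zero mass against $\omega^{k-1}$ by Stokes. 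Combined with $\||\psi|\|_{L^1} = \|\psi\|_{L^1} \leq \|\psi\|_{DSH}$, this yields $\||\psi|\|_{DSH} \leq c'\|\psi\|_{DSH}$.

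The main obstacle is conceptually mild: it is the bookkeeping needed to ensure that both $\varphi_1$ and $\varphi_2$ are genuinely negative everywhere while keeping every auxiliary constant controlled by $\|\psi\|_{DSH}$. This reduces to using Theorem \ref{DSq} to obtain uniformly bounded means for the quasi-potentials $u^{\pm}$, which in turn controls the pluriharmonic correction $c_0$, together with the observation that the shift $K$ that enforces negativity does not affect $dd^c\varphi_i$. A secondary technical point is that the equality $\psi = \varphi_1 - \varphi_2$ is only required to hold outside a pluripolar set, which is already built into the definition of $DSH(\pk)$.
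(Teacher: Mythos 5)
The paper does not contain its own proof of this proposition: the sentence immediately preceding it reads ``The following proposition is proved in \cite{DS4}; we state it here for convenience of the reader,'' so there is no in-paper argument to compare against line by line. Your reconstruction is correct and follows the natural Dinh--Sibony route. You solve the $dd^c$-equation for $T^{\pm}/M\in\Cp$ (with $p=1$) via Theorem~\ref{DSq}, obtaining negative qpsh $u^{\pm}$ with uniformly bounded means; the discrepancy $\psi-M(u^+-u^-)$ is $dd^c$-closed, hence a constant $c_0$ controlled by $\|\psi\|_{DSH}$; and the shift by $K=\max(0,-c_0)$ restores negativity of $\varphi_2$ without changing $dd^c\varphi_i=T^{\pm}-M\omega\ge-M\omega\ge-2\|\psi\|_{DSH}\,\omega$, with $M=\|T^{\pm}\|\le 2\inf\|T^{\pm}\|\le 2\|\psi\|_{DSH}$ (and the degenerate case $\inf=0$ is handled, since then $\psi$ is constant). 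For $|\psi|$ the identity $|\psi|=\max(2\varphi_1,2\varphi_2)-(\varphi_1+\varphi_2)$, valid outside the pluripolar exceptional set, combined with stability of the class $\{dd^c\ge-\alpha\omega\}$ under finite maxima and a Stokes computation of the masses of $R^{\pm}$, is exactly the standard argument. The only bookkeeping point worth making explicit is that $dd^c(2\varphi_i)\ge-4\|\psi\|_{DSH}\,\omega$, so your auxiliary constant $c_1$ must be at least $4\|\psi\|_{DSH}$ for both $R^{\pm}$ to be positive; with that choice you get $\||\psi|\|_{DSH}\le 5\|\psi\|_{DSH}$, which is of the claimed form.
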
 
\subsection{Fiberwise Mixing} 
In this section, we explore the speed of mixing over the ``fibers" of $\tau$. For fixed $\lambda\in \mathscr{A}$ each $f_n:=\lambda(n)\in \Hd$ induces a unitary operator 
$$U_{n}:L^2_{\mu_{\theta^{n+1}(\lambda)}}(\pk)\to L^2_{\mu_{\theta^n(\lambda)}}(\pk)$$
$$\varphi \to \varphi\circ f_n.$$
We denote the adjoint of this operator by 
$$\mathcal{L}_n: L^2_{\mu_{\theta^n(\lambda)}}(\pk) \to  L^2_{\mu_{\theta^{n+1}(\lambda)}}(\pk)$$
$$\mathcal{L}_{n}\psi(x)=d^{-k}\sum_{f_n(y)=x}\psi(y).$$

 \begin{prop}\label{mix}
Let $\lambda\in \mathscr{A}$ be fixed. If $\varphi \in L^p_{\mu_{\theta^n(\lambda)}}(\pk)$ and $\psi \in DSH( \pk)$ then 
$$ |\la \mu_{\lambda}, (\varphi\circ f_{n-1}\circ \dots \circ f_0) \psi\ra - \la\mu_{n,\lambda}, \varphi\ra \la \mu_{\lambda},\psi\ra| \leq Cd^{-n}||\varphi||_{L^p_{\mu_{\theta^n(\lambda)}}} \ ||\psi||_{DSH}^{\mu_{\lambda}}$$ 
where $C>0$ depends only on $\lambda$ and $p>1.$
\end{prop}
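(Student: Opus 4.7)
The plan is to convert the correlation into an estimate on $\psi_n := \mathcal{L}_{n-1}\circ\cdots\circ\mathcal{L}_0\psi$ via the duality between $\mathcal{L}_j$ and $U_j$. Iterating the adjoint relation $\la U_j\varphi,\psi\ra_{\mu_{\theta^j(\lambda)}} = \la\varphi,\mathcal{L}_j\psi\ra_{\mu_{\theta^{j+1}(\lambda)}}$ yields
\[
\la \mu_{\lambda}, (\varphi\circ F_{\lambda,n})\psi\ra = \la \mu_{\theta^n(\lambda)}, \varphi\cdot\psi_n\ra,
\]
and, taking $\varphi\equiv 1$, one gets $\la\mu_{\theta^n(\lambda)},\psi_n\ra = \la \mu_\lambda,\psi\ra$. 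Subtracting the product of means and applying H\"older's inequality with conjugate exponent $q = p/(p-1)<\infty$, the problem reduces to showing
\[
\|\psi_n-\la\mu_\lambda,\psi\ra\|_{L^q_{\mu_{\theta^n(\lambda)}}} \leq Cd^{-n}\|\psi\|_{DSH}^{\mu_\lambda}.
\]

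The key to this estimate is that $d^{-k}f_*$ contracts the mass of positive closed $(1,1)$ currents by a factor of $d^{-1}$, since each $f_j$ multiplies the mass of such currents by $d^{k-1}$. Writing $dd^c\psi = T^+-T^-$ with $\|T^\pm\|$ chosen arbitrarily close to the infimum entering $\|\psi\|_{DSH}^{\mu_\lambda}$, one has $dd^c\psi_n = d^{-kn}(F_{\lambda,n})_*(T^+-T^-)$, whose positive and negative parts have mass at most $d^{-n}\|T^\pm\|$. Hence I can write $dd^c\psi_n = A_n(R_n^+-R_n^-)$ with $A_n\leq d^{-n}\|\psi\|_{DSH}^{\mu_\lambda}$ and $R_n^\pm \in \mathscr{C}_1$. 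Choosing quasi-potentials $u_n^\pm$ of $R_n^\pm$ normalized by $\sup u_n^\pm = 0$ places them in the fixed compact family $\mathcal{K}:=\{u\text{ qpsh on }\pk:\sup u=0,\,dd^cu\geq -\omega\}$ and gives $\psi_n = A_n(u_n^+-u_n^-) + c_n$, where $c_n$ is determined by the mean identity $\la\mu_{\theta^n(\lambda)},\psi_n\ra = \la\mu_\lambda,\psi\ra$.

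It remains to bound $\|u_n^\pm\|_{L^q_{\mu_{\theta^n(\lambda)}}}$ uniformly in $n$. For this I would invoke Corollary \ref{mod}: for $\lambda\in\mathscr{A}$ the measures $\{\mu_{\theta^n(\lambda)}\}_{n\geq 0}$ are uniformly moderate against $\mathcal{K}$, so there exist $\rho,c>0$ with $\int_{\pk}e^{-\rho u}\,d\mu_{\theta^n(\lambda)}\leq c$ for every $u\in\mathcal{K}$ and every $n\geq 0$. Since the $u_n^\pm$ are non-positive, elementary Taylor bounds then yield $\|u_n^\pm\|_{L^q_{\mu_{\theta^n(\lambda)}}}\leq C_q$ and $|\la\mu_{\theta^n(\lambda)},u_n^\pm\ra|\leq C$, both uniform in $n$. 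Combining these gives $|c_n - \la\mu_\lambda,\psi\ra|\lesssim A_n$ and therefore $\|\psi_n - \la\mu_\lambda,\psi\ra\|_{L^q_{\mu_{\theta^n(\lambda)}}} \leq C'A_n \leq Cd^{-n}\|\psi\|_{DSH}^{\mu_\lambda}$, which closes the argument.

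The main obstacle is the uniform moderateness along the forward orbit $\{\mu_{\theta^n(\lambda)}\}_{n\geq 0}$; this is precisely what Corollary \ref{mod} supplies, and it ultimately rests on the uniform H\"older bound for super-potentials from Remark \ref{uniform}, which is also why the constant $C$ in the conclusion is allowed to depend on $\lambda$. Once this uniform control is in place, duality, H\"older's inequality, and the compact-family description of dsh functions from Proposition \ref{pro} combine in a routine fashion.
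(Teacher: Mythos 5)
Your argument is correct and, at the structural level, follows the same route as the paper: duality converts the correlation into $\la\mu_{\theta^n(\lambda)},\varphi\,\psi_n\ra$ with $\psi_n=\mathcal{L}_{n-1}\circ\cdots\circ\mathcal{L}_0\psi$, one then applies H\"older and must bound $\|\psi_n-\la\mu_\lambda,\psi\ra\|_{L^q_{\mu_{\theta^n(\lambda)}}}$, and the decisive input for that bound is the uniform moderateness of $\{\mu_{\theta^n(\lambda)}\}_n$ from Corollary \ref{mod}. The one place you differ from the paper is in how you package the $L^q$ estimate: the paper isolates it in Lemma \ref{compact} and passes through a chain of DSH-norm comparisons (Lemma \ref{dsh}, Remark \ref{rem}, Proposition \ref{pro}) to show that $d^n\|\psi\|_{DSH}^{-1}|\psi_n|$ stays in a bounded DSH family before invoking Corollary \ref{mod}, whereas you write $dd^c\psi_n=A_n(R_n^+-R_n^-)$ with $A_n\leq d^{-n}\|T^\pm\|$ by the cohomological mass contraction of $d^{-k}(f_j)_*$, choose $\sup$-normalized quasi-potentials $u_n^\pm$ of the unit-mass currents $R_n^\pm$ to land in a fixed compact family, and read off $\psi_n=A_n(u_n^+-u_n^-)+c_n$ directly. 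Your version is a little more self-contained: it avoids the back-and-forth between $\|\cdot\|_{DSH}$ and $\|\cdot\|_{DSH}^{\mu_{\theta^n(\lambda)}}$ and makes the $d^{-n}$ decay visible as the mass $A_n$, at the modest cost of having to track the constant $c_n$ via the mean identity $\la\mu_{\theta^n(\lambda)},\psi_n\ra=\la\mu_\lambda,\psi\ra$. Both arguments rest on the same two facts: $d^{-k}f_*$ divides the mass of positive closed $(1,1)$ currents by $d$, and the forward orbit of measures is uniformly moderate because the random Green currents have uniformly H\"older super-potentials along the orbit.
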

We need several preliminary lemmas to prove Proposition \ref{mix}. The next lemma is an improved version of \cite[Proposition 7]{dTh1} and it will be helpful in the sequel. In what follows, $C_{\lambda}$ denotes a constant which depends on $\lambda.$
\begin{lem}\label{dsh}
For  $\lambda\in \mathscr{A}$ there exists $C_{\lambda}>0$ such that 
$$\|\psi\|_{DSH}^{\mu_{\theta^n(\lambda)}}\leq C_{\lambda}\|\psi\|_{DSH}$$
 for every $n\in \Bbb{N}$ and $\psi\in DSH(\pk).$
\end{lem}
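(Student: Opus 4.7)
The plan is to reduce the proof to establishing a uniform bound of the form
$$|\la \mu_{\theta^n(\lambda)}, \psi\ra| \leq C_\lambda \|\psi\|_{DSH}$$
for every $n \geq 0$ and every $\psi \in DSH(\pk)$. Indeed, the two norms $\|\psi\|_{DSH}^{\mu_{\theta^n(\lambda)}} = |\la \mu_{\theta^n(\lambda)},\psi\ra| + \inf\|T^\pm\|$ and $\|\psi\|_{DSH} = \|\psi\|_{L^1(\pk)} + \inf\|T^\pm\|$ share the same $\inf\|T^\pm\|$ term, so once such a bound on the first summands is in hand, choosing $C_\lambda \geq 1$ gives the stated inequality.

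The uniform bound will come from the moderate property of Corollary \ref{mod}. Applied to the standard compact family
$$\mathcal{K}_0 := \{v \text{ qpsh on } \pk :\ v \leq 0,\ dd^c v + \om \geq 0,\ \sup v = 0\},$$
it furnishes constants $\rho_\lambda, c_\lambda > 0$ with $\int_{\pk} e^{-\rho_\lambda v}\, d\mu_{\theta^n(\lambda)} \leq c_\lambda$ for every $n \geq 0$ and every $v \in \mathcal{K}_0$. Using the elementary inequality $e^{-\rho_\lambda v} \geq 1-\rho_\lambda v$ and integrating (noting $v \leq 0$), this yields a uniform estimate $|\la \mu_{\theta^n(\lambda)}, v\ra| \leq C'_\lambda$ for all $v \in \mathcal{K}_0$ and $n\geq 0$.

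Next, apply Proposition \ref{pro} to write $\psi = \varphi_1 - \varphi_2$ with $\varphi_i \leq 0$ qpsh and $dd^c \varphi_i \geq -M\om$, where $M := c\|\psi\|_{DSH}$. Translating both $\varphi_i$ by a common constant preserves the decomposition, so I may normalize to $\sup \varphi_1 = 0$ and $\sup \varphi_2 \leq 0$. Setting $v_i := \varphi_i/M - \sup(\varphi_i/M)$ places each $v_i \in \mathcal{K}_0$, and therefore
$$|\la \mu_{\theta^n(\lambda)}, \varphi_i\ra| \leq M\,|\la \mu_{\theta^n(\lambda)}, v_i\ra| + |\sup \varphi_i| \leq MC'_\lambda + |\sup \varphi_i|.$$

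The main technical obstacle is to bound $|\sup \varphi_i|$ by a multiple of $\|\psi\|_{DSH}$, which relies on the $L^1$-compactness of $\mathcal{K}_0$. Since $\|v_i\|_{L^1(\om^k)} \leq C_0$ universally, and since both $M v_i$ and $\sup\varphi_i$ are non-positive, one computes $\|\varphi_i\|_{L^1(\om^k)} = M\|v_i\|_{L^1(\om^k)} + |\sup \varphi_i|$. For $\varphi_1$, the normalization $\sup \varphi_1 = 0$ gives $\|\varphi_1\|_{L^1} = M\|v_1\|_{L^1} \leq cC_0 \|\psi\|_{DSH}$; the triangle inequality $\|\varphi_2\|_{L^1} \leq \|\varphi_1\|_{L^1} + \|\psi\|_{L^1}$ then yields $\|\varphi_2\|_{L^1} \leq (cC_0 + 1)\|\psi\|_{DSH}$, so in both cases $|\sup \varphi_i| \leq \|\varphi_i\|_{L^1} \lesssim \|\psi\|_{DSH}$. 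Combining these with the moderate estimate above gives $|\la \mu_{\theta^n(\lambda)}, \psi\ra| \leq \sum_i |\la \mu_{\theta^n(\lambda)}, \varphi_i\ra| \leq C_\lambda \|\psi\|_{DSH}$, with $C_\lambda$ independent of $n$ and $\psi$, which is exactly what is required.
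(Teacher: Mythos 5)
Your proof takes a genuinely different route from the paper's. The paper derives the uniform bound $|\la\mu_{\theta^n(\lambda)},\psi\ra| \leq C_\lambda\|\psi\|_{DSH}$ directly from the uniform H\"older continuity of the super-potentials (Remark \ref{uniform}) via the external result \cite[Lemma 3.5]{DN12}, then finishes by a short dichotomy on whether $\|\psi\|_{L^1}$ dominates the interpolated quantity $c^{1-\beta}\|\psi\|_{DSH}^{1-\beta}\|\psi\|_{L^1}^{\beta}$. You instead pass through Corollary \ref{mod} (moderateness of the $\mu_{\theta^n(\lambda)}$, uniform in $n$) applied to the compact family of normalized $\om$-psh functions, combined with the decomposition of Proposition \ref{pro}. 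Both routes ultimately rest on the same underlying input --- the uniform H\"older estimate on the super-potentials of the random Green currents --- but yours is more self-contained within the paper, at the cost of the extra bookkeeping needed to control $|\sup\varphi_i|$.

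There is, however, a gap in your normalization step. After translating so that $\sup\varphi_1 = 0$, you assert $\sup\varphi_2 \leq 0$, but this holds only if $\sup\varphi_1 \geq \sup\varphi_2$ held before the translation, which Proposition \ref{pro} does not guarantee. Consequently the equality $\|\varphi_2\|_{L^1} = M\|v_2\|_{L^1} + |\sup\varphi_2|$ and the inequality $|\sup\varphi_2| \leq \|\varphi_2\|_{L^1}$ that you rely on need not hold. The patch is routine: from $\varphi_2 = Mv_2 + \sup\varphi_2$ and $\int v_2\,\om^k = -\|v_2\|_{L^1}$ one gets $\sup\varphi_2 = \int\varphi_2\,\om^k + M\|v_2\|_{L^1}$, whence $|\sup\varphi_2| \leq \|\varphi_2\|_{L^1} + MC_0 \leq \|\varphi_1\|_{L^1} + \|\psi\|_{L^1} + MC_0 \lesssim \|\psi\|_{DSH}$ regardless of the sign of $\sup\varphi_2$. (Alternatively, since the target estimate is invariant under $\psi \mapsto -\psi$, one may assume WLOG that $\sup\varphi_1 \geq \sup\varphi_2$ at the outset.) With this correction the argument is sound.
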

\begin{proof}
Let $\psi \in DSH(\pk)$ then by Proposition \ref{pro} there exists qpsh functions $\phi_i$ such that $\psi=\phi_1-\phi_2$ and $dd^c\phi_i \geq -c\|\psi\|_{DSH}\omega$ where $c>0$ is independent of $\psi$ and $\phi_i$. Since random Green currents have H\"{o}lder continuous super potentials with H\"{o}lder exponent $0<\beta\leq 1,$ by Remark \ref{uniform} and \cite[ Lemma 3.5]{DN12} we obtain 
$$|\la \mu_{\theta^n(\lambda)},\psi\ra|\leq C_{\lambda}\max(\|\psi\|_{L^1},c^{1-\beta}\|\psi\|_{DSH}^{1-\beta}\|\psi\|_{L^1}^{\beta})$$
 If $\|\psi\|_{L^1}\geq c^{1-\beta}\|\psi\|_{DSH}^{1-\beta}\|\psi\|_{L^1}^{\beta}$ we are done. If not then $\|\psi\|_{L^1}<c\|\psi\|_{DSH}$ and this implies that 
$$|\la\mu_{\theta^n(\lambda)},\psi\ra|\leq cC_{\lambda}\|\psi\|_{DSH}$$

\end{proof}
\begin{rem}\label{rem}
By using a similar argument and using Lemma \ref{dsh} one can also show that for $\lambda\in \mathscr{A}$ there exists a constant $C_{\lambda}>0$  such that 
$$\|\psi\|_{DSH}\leq C_{\lambda}\max\|\psi\|_{DSH}^{\mu_{\theta^n(\lambda)}}.$$
for every $n\geq 0$ (cf. \cite[Proposition 8]{dTh1}). 
\end{rem}
The following lemma is essentially due to \cite{DNS}, however, we need to make some modifications to adapt it in our setting.
\begin{lem}\label{compact}
Let $\lambda\in \mathscr{A}$ and $\psi\in DSH(\pk)$. If $\la \mu_{\lambda},\psi\ra=0$ then there exists $C_{\lambda}>0$ such that for every $q\geq1$  
\begin{equation}\label{strong}
\| |\mathcal{L}_{n-1}\circ \dots \circ \mathcal{L}_1\circ \mathcal{L}_0(\psi)|\|_{L^q{\mu_{\theta^n(\lambda)}}}\leq qC_{\lambda} d^{-n} \|\psi\|_{DSH}^{\mu_{\lambda}}
\end{equation}
for $n\geq 1.$ 
\end{lem}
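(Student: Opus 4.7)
The strategy is to mirror the deterministic Dinh--Nguyen--Sibony proof, replacing their single transfer operator with the composition $\mathcal{L}_{n-1}\circ\dots\circ\mathcal{L}_0$ along the orbit of $\lambda$ and using the uniform moderate estimate of Corollary \ref{mod} to keep constants independent of $n$. I would prove the bound in two stages: first a ``geometric'' contraction estimate for the DSH seminorm, then a ``thermodynamic'' passage from DSH to $L^q$ via exponential integrability. Throughout, write $\psi_n := \mathcal{L}_{n-1}\circ\dots\circ\mathcal{L}_0\psi$; the mean-zero hypothesis together with the fact that $\mathcal{L}_j$ is the adjoint of composition for the invariant measures gives $\la\mu_{\theta^n(\lambda)},\psi_n\ra=0$ for every $n$, so $\|\psi_n\|_{DSH}^{\mu_{\theta^n(\lambda)}}$ reduces to the infimum of $dd^c$-masses.

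For the contraction estimate, apply Proposition \ref{pro} to write $\psi=\phi_1-\phi_2$ with $\phi_i\leq 0$ qpsh and $dd^c\phi_i\geq -c\|\psi\|_{DSH}\,\omega$. Since $\mathcal{L}_j=d^{-k}(f_j)_*$ on functions, we have $\phi_i^{(n)}:=\mathcal{L}_{n-1}\circ\dots\circ\mathcal{L}_0\phi_i = d^{-nk}(F_{\lambda,n})_*\phi_i$ and
$$
dd^c\phi_i^{(n)} \;=\; d^{-nk}(F_{\lambda,n})_*dd^c\phi_i \;\geq\; -c\|\psi\|_{DSH}\,d^{-n}\,\omega_n^{(\lambda)},
$$
where $\omega_n^{(\lambda)}:=d^{-n(k-1)}(F_{\lambda,n})_*\omega$ is a positive closed $(1,1)$ current of mass one. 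Writing $\omega_n^{(\lambda)}=\omega+dd^c u_n$ with $u_n$ qpsh (Theorem \ref{DSq} for $p=1$), the function $\phi_i^{(n)}+cd^{-n}\|\psi\|_{DSH}u_n$ is qpsh with $dd^c\geq -cd^{-n}\|\psi\|_{DSH}\omega$. Subtracting the two sides gives a decomposition of $\psi_n$ whose positive and negative $dd^c$-masses are each $O(d^{-n}\|\psi\|_{DSH})$. Combined with $\la\mu_{\theta^n(\lambda)},\psi_n\ra=0$ and Remark \ref{rem} (to replace the standard $DSH$ norm on the right by $\|\psi\|_{DSH}^{\mu_\lambda}$), this yields
$$
\|\psi_n\|_{DSH}^{\mu_{\theta^n(\lambda)}} \;\leq\; C_\lambda\, d^{-n}\,\|\psi\|_{DSH}^{\mu_\lambda}.
$$

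To upgrade this to the $L^q$ bound, renormalize by $A_n:=\|\psi_n\|_{DSH}^{\mu_{\theta^n(\lambda)}}$; the qpsh decomposition above shows that $\psi_n/A_n$ lies in a fixed compact family of differences of qpsh functions with $dd^c\geq-\omega$. Corollary \ref{mod} applied to each side of this decomposition gives constants $\rho(\lambda),c(\lambda)>0$ for which $\int e^{\rho|\psi_n|/A_n}\,d\mu_{\theta^n(\lambda)}\leq c$ holds uniformly in $n$. Expanding the exponential and using Stirling's formula in the usual way yields $\|\psi_n\|_{L^q(\mu_{\theta^n(\lambda)})}\leq C_0 q A_n$, and combining with the DSH bound gives the desired estimate \eqref{strong}.

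The main obstacle I anticipate is obtaining the uniform-in-$n$ constant $C_\lambda$ in both the DSH contraction step and in the moderate estimate. For the contraction one needs a uniform bound on the $dd^c$-mass of the auxiliary currents $\omega_n^{(\lambda)}$ (which is automatic since they are of mass one) together with the cancellation of the ``helper'' function $u_n$ when forming the difference $\phi_1^{(n)}-\phi_2^{(n)}$; for the moderate estimate the key point is precisely the uniform version recorded in Corollary \ref{mod} (and ultimately in Remark \ref{uniform}), which rests on the uniform H\"older bound for the super-potentials of $T_k(\theta^n(\lambda))$ with probability one.
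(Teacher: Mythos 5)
Your argument follows the same architecture as the paper's proof: a DSH-norm contraction estimate for $\psi_n := \mathcal{L}_{n-1}\circ\dots\circ\mathcal{L}_0\psi$, followed by the uniform moderate estimate of Corollary~\ref{mod} to pass to an exponential integrability bound, and finally the inequality $x^q/q!\leq e^x$ to extract the $L^q$ estimate with the linear-in-$q$ constant. The only real difference is cosmetic: you derive the contraction in one shot by pushing forward through $F_{\lambda,n}$ and introducing the auxiliary potential $u_n$ of $\omega_n^{(\lambda)}$, whereas the paper iterates $\mathcal{L}_j$ one step at a time and tracks only the masses of a $dd^c$-decomposition $R^+ - R^-$ via a cohomological computation. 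Both give the same $d^{-n}$ decay and in fact your helper $u_n$ is superfluous for the DSH bound itself (it cancels in the difference, and the $dd^c$-mass argument already works with $\omega_n^{(\lambda)}$ in place of $\omega$, since it has mass one); $u_n$ is only needed for the compactness claim in the second stage.

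There is one slip that should be fixed. You renormalize by $A_n := \|\psi_n\|_{DSH}^{\mu_{\theta^n(\lambda)}}$ and assert that $\psi_n/A_n$ lies in a \emph{fixed} compact family of differences of qpsh functions with $dd^c\geq -\omega$. This does not follow: $A_n$ is the infimum over decompositions and you have only shown $A_n \leq c d^{-n}\|\psi\|_{DSH}$, so $A_n$ could be much smaller, and then $dd^c\bigl(\varphi_i^{(n)}/A_n\bigr) \geq -\bigl(c d^{-n}\|\psi\|_{DSH}/A_n\bigr)\omega$ with an arbitrarily large constant in front, breaking compactness. The remedy is to normalize instead by the \emph{upper bound} $c d^{-n}\|\psi\|_{DSH}$, exactly as the paper normalizes by $d^n/\|\psi\|_{DSH}$; then $\psi_n \cdot d^n/\|\psi\|_{DSH}$ is uniformly bounded in $DSH(\pk)$, one applies Proposition~\ref{pro} to control $|\psi_n|$, Corollary~\ref{mod} gives the uniform exponential estimate, and Remark~\ref{rem} converts $\|\psi\|_{DSH}$ back to $\|\psi\|_{DSH}^{\mu_\lambda}$ on the right-hand side. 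With that change your proof is complete and matches the paper's.
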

\begin{proof}
Note that $(f_{n-1})^*\mu_{\theta^n(\lambda)}=d^k\mu_{\theta^{n-1}(\lambda)}$ for $n\geq1$. This implies that 
$$\la\mu_{\theta^n(\lambda)},\mathcal{L}_{n-1}(\mathcal{L}_{n-2}\circ \dots \circ \mathcal{L}_0\psi)\ra=0.$$ Moreover, 
$$\|\mathcal{L}_{n-1}(\mathcal{L}_{n-2}\circ \dots \circ \mathcal{L}_0\psi)\|_{DSH}^{\mu_{\theta^n(\lambda)}}\leq d^{-1}\|\mathcal{L}_{n-2}\circ \dots \circ \mathcal{L}_0\psi\|_{DSH}^{\mu_{\theta^{n-1}(\lambda)}}$$ 
Indeed, we may write $$dd^c(\mathcal{L}_{n-2}\circ \dots \circ\mathcal{L}_0\psi)=R_{n-2}^+-R_{n-2}^-$$ where $R_{n-2}^{\pm}$ are some positive closed $(1,1)$ currents. 
Then
\begin{equation}\label{normm}
\|\mathcal{L}_{n-1}(\mathcal{L}_{n-2}\circ \dots \circ \mathcal{L}_0\psi)\|^{\mu_{\theta^n(\lambda)}}_{DSH}\leq\|d^{-k}(f_{n-1})_*(R_{n-2}^{\pm})\|= d^{-1}\|R_{n-2}^{\pm}\|
\end{equation} where the last equality follows from cohomological computation. Now by Proposition \ref{pro}, Remark \ref{rem}, Lemma \ref{dsh} and (\ref{normm}) we obtain 
\begin{eqnarray*}
\| |\mathcal{L}_{n-1}\circ \dots \circ \mathcal{L}_1\circ \mathcal{L}_0(\psi)|\|_{DSH} & \leq & C \| \mathcal{L}_{n-1}\circ \dots \circ \mathcal{L}_1\circ \mathcal{L}_0(\psi)\|_{DSH} \\
& \leq & C_1  \| \mathcal{L}_{n-1}\circ \dots \circ \mathcal{L}_1\circ \mathcal{L}_0(\psi)\|_{DSH}^{\mu_{\theta^n(\lambda)}} \\
& \leq & C_2 d^{-n}\|\psi\|_{DSH}^{\mu_{\lambda}}\\
& \leq & C_3d^{-n} \|\psi\|_{DSH}
\end{eqnarray*}
where $C_3>0$ depends on $\lambda$ but does not depend on $n$ nor $\psi.$ Thus, by above estimate and Lemma \ref{dsh} it is enough to prove the case $\|\psi\|_{DSH}>0$. Since $$\frac{d^n}{\|\psi\|_{DSH}}|\mathcal{L}_{n-1}\circ \dots \circ \mathcal{L}_1\circ \mathcal{L}_0(\psi)|$$ is a bounded sequence in $DSH(\pk)$ by Corollary \ref{mod} there exists $\beta>0$ and $C_{\lambda}>0$ such that
$$\la\mu_{\theta^n(\lambda)}, \exp(\beta \frac{d^n}{\|\psi\|_{DSH}}|\mathcal{L}_{n-1}\circ \dots \circ \mathcal{L}_1\circ \mathcal{L}_0(\psi)|)\ra \leq C_{\lambda}.$$
Now, by using the inequality $\frac{x^q}{q!}\leq e^x$ for $x\geq 0$ we conclude that
$$ \| |\mathcal{L}_{n-1}\circ \dots \circ \mathcal{L}_1\circ \mathcal{L}_0(\psi)|\|_{L^q_{\mu_{\theta^n(\lambda)}}}\leq \frac{q}{\beta}C_{\lambda}d^{-n}\|\psi\|_{DSH} $$ 
\end{proof}

\begin{proof}[Proof of Proposition \ref{mix}]
Let $\lambda\in \mathscr{A}$ be fixed. If $\psi$ is constant then the assertion follows from the invariance properties $$ (f_j)_*\mu_{\theta^j(\lambda)}=\mu_{\theta^{j+1}(\lambda)}.$$ Thus, replacing $\psi$ by  $\psi-\la\mu_{\lambda},\psi\ra$ we may assume that $\la\mu_{\lambda},\psi\ra=0.$ Then by H\"older's inequality and applying Lemma \ref{compact} with $q=\frac{p}{p-1}$ we obtain
\begin{eqnarray*}
|\la \mu_{\lambda}, (\varphi\circ f_{n-1}\circ \dots \circ f_0) \psi\ra| & = & d^{-kn}|\la F_{\lambda,n}^*\mu_{\theta^n(\lambda)}, (\varphi\circ f_{n-1}\circ \dots \circ f_0) \psi\ra| \\
& \leq & |\la \mu_{\theta^n(\lambda)},\varphi \mathcal{L}_{n-1}\circ \dots \circ \mathcal{L}_1\circ \mathcal{L}_0(\psi)\ra| \\
& \leq & \|\varphi\|_{L^p(\mu_{\theta^n(\lambda)})} \||\mathcal{L}_{n-1}\circ \dots \circ \mathcal{L}_1\circ \mathcal{L}_0(\psi)|\|_{L^q(\mu_{\theta^n(\lambda)})}\\
& \leq & \frac{p}{p-1}C_{\lambda} d^{-n}\|\varphi\|_{L^p(\mu_{\theta^n(\lambda)})} \|\psi\|_{DSH}^{\mu_{\lambda}}
\end{eqnarray*}
for some $c>0$ independent of $\psi$ and for all $n\geq 0.$ 
\end{proof}
In the deterministic case, as a consequence of interpolation theory between the Banach spaces $\mathscr{C}^0$ and $\mathscr{C}^2$ \cite{Triebel}; it was observed in \cite{DNS} that a holomorphic map $f\in\Hd$ posses strong mixing property for $\beta$-H\"{o}lder continuous functions with $0<\beta\leq 1$ (see \cite[Proposition 3.5]{DNS}). Adapting their argument to our setting, we obtain the succeeding lemma. We omit the proof as it is similar to the one given in Lemma \ref{compact} and to that of \cite[Proposition 3.5]{DNS}.
\begin{lem}\label{hholder}
Let $\lambda\in\mathscr{A}$ be fixed, $q>1$ and $0<\beta\leq1.$ If $\psi:\pk\to \Bbb{R}$ be a $\beta$-H\"older continuous function such that $\la\mu_{\lambda},\psi\ra=0$ then there exists a constant $C_{\lambda,\beta}>0$ such that
$$ \| |\mathcal{L}_{n-1}\circ \dots \circ \mathcal{L}_1\circ \mathcal{L}_0(\psi)|\|_{L^q(\mu_{\theta^n(\lambda)})}\leq C_{\lambda,\beta} d^{-\frac{n\beta}{2}} \|\psi\|_{\mathscr{C}^{\beta}}
$$
for every $n\geq1.$
\end{lem}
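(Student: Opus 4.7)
The plan is to reduce to Lemma \ref{compact} via a regularization argument, following the strategy of Dinh--Nguyen--Sibony \cite[Proposition 3.5]{DNS}. The key observation is that every $\mathscr{C}^2$ function $u$ on $\pk$ is dsh with $\|u\|_{DSH}\lesssim \|u\|_{\mathscr{C}^2}$: writing $dd^c u = T^+ - T^-$ for positive closed $(1,1)$ currents $T^\pm$ with $\|T^\pm\|\lesssim \|u\|_{\mathscr{C}^2}$ gives the bound. Thus a $\beta$-H\"older function, approximated at scale $\epsilon$ by a smooth one, falls within the range of Lemma \ref{compact}, and interpolation between $\mathscr{C}^0$ and $\mathscr{C}^2$ produces the $\beta/2$ exponent.

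First, I would fix a standard family of smoothing kernels on $\pk$ (via convolution in local charts with a partition of unity, or via the heat kernel) producing, for each $\epsilon>0$, a smooth function $\psi_\epsilon$ satisfying the classical interpolation estimates
\[
\|\psi - \psi_\epsilon\|_{L^\infty(\pk)} \leq c_1\, \epsilon^\beta \|\psi\|_{\mathscr{C}^\beta}, \qquad \|\psi_\epsilon\|_{\mathscr{C}^2} \leq c_2\, \epsilon^{\beta - 2}\|\psi\|_{\mathscr{C}^\beta},
\]
with $c_1,c_2$ depending only on $\pk$; these are the standard interpolation bounds between $\mathscr{C}^0$ and $\mathscr{C}^2$ as in \cite{Triebel}. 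The remark above then yields $\|\psi_\epsilon\|_{DSH}\leq c_3\, \epsilon^{\beta-2}\|\psi\|_{\mathscr{C}^\beta}$.

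Next, set $c_\epsilon := \la \mu_\lambda, \psi_\epsilon\ra$. Since $\la\mu_\lambda,\psi\ra = 0$ we have $|c_\epsilon|\leq \|\psi - \psi_\epsilon\|_{L^\infty}\leq c_1\,\epsilon^\beta \|\psi\|_{\mathscr{C}^\beta}$, so $\psi_\epsilon - c_\epsilon$ is dsh with zero $\mu_\lambda$-mean and $\|\psi_\epsilon - c_\epsilon\|_{DSH}^{\mu_\lambda}\leq c_3\,\epsilon^{\beta-2}\|\psi\|_{\mathscr{C}^\beta}$. Applying Lemma \ref{compact} to $\psi_\epsilon - c_\epsilon$ with $q>1$ gives
\[
\|\,|\mathcal{L}_{n-1}\circ\cdots\circ\mathcal{L}_0(\psi_\epsilon - c_\epsilon)|\,\|_{L^q(\mu_{\theta^n(\lambda)})} \leq q\, C_\lambda' \, d^{-n}\, \epsilon^{\beta-2}\|\psi\|_{\mathscr{C}^\beta}.
\]
For the error term, each $\mathcal{L}_j$ is an averaging operator over $d^k$ preimages, hence a contraction on $L^\infty$; combined with the trivial inequality $\|\cdot\|_{L^q(\mu_{\theta^n(\lambda)})}\leq \|\cdot\|_{L^\infty}$ on a probability space, this yields
\[
\|\,|\mathcal{L}_{n-1}\circ\cdots\circ\mathcal{L}_0(\psi - \psi_\epsilon + c_\epsilon)|\,\|_{L^q(\mu_{\theta^n(\lambda)})} \leq 2 c_1\, \epsilon^\beta \|\psi\|_{\mathscr{C}^\beta}.
\]
Adding the two contributions and optimizing by choosing $\epsilon := d^{-n/2}$ balances $d^{-n}\epsilon^{\beta-2}$ against $\epsilon^\beta$ at the common value $d^{-n\beta/2}$, delivering the required estimate.

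The main (and essentially only) obstacle is checking the interpolation bounds on $\pk$ with the stated constants; this is standard for compact manifolds via Littlewood--Paley decomposition or heat kernel smoothing, and it is the same input underlying the DNS argument. The centering by $c_\epsilon$ is a routine device ensuring the hypothesis $\la \mu_\lambda,\cdot\ra = 0$ of Lemma \ref{compact} applies; everything else is an optimization of two elementary bounds.
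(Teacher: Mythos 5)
Your proof is correct and follows precisely the route the paper indicates: the paper omits the proof, pointing to the adaptation of \cite[Proposition 3.5]{DNS} via $\mathscr{C}^0$--$\mathscr{C}^2$ interpolation, which is exactly your regularization-at-scale-$\epsilon$ argument (smooth part fed into Lemma \ref{compact} through the bound $\|\psi_\epsilon\|_{DSH}\lesssim\|\psi_\epsilon\|_{\mathscr{C}^2}$, error part controlled by the $L^\infty$ contraction of the $\mathcal{L}_j$, optimized at $\epsilon=d^{-n/2}$). The centering by $c_\epsilon$ to restore the zero-mean hypothesis of Lemma \ref{compact} is the right and necessary device.
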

\subsection{Exponential Mixing}\label{expo}
In this section we prove that the dynamical system $(X,\mathscr{B},\tau,\mu)$ is exponentially mixing for dsh and H\"{o}lder continuous observables. We let $\pi:X\to\pk$ be the projection on the second factor. For a measurable function $\varphi:X\to \Bbb{R}$ we denote $\varphi_{\lambda}(x):=\varphi(\lambda,x)$ and for a measurable function $\psi:\pk\to\Bbb{R}$ we define $\tilde{\psi}:=\psi \circ \pi$. Note that
\begin{equation}\label{til}
\|\tilde{\psi}\|^2_{L^2_{\mu}(X)}=\int_{\Omega}\la\mu_{\lambda},|\psi|^2\ra d\p(\lambda)=\int_{\Omega}\|\psi\|_{L^2_{\mu_{\lambda}}}^2d\p(\lambda)
\end{equation} 
 Let us denote  the unitary operator induced by $\tau$  
 $$U_{\tau}:L^2(X)\to L^2(X)$$ 
 $$\varphi\to \varphi\circ \tau$$
 and $P_{\tau}=U_{\tau}^*$ is the adjoint operator.
 
 \begin{prop}\label{adjoint}
 Let $\psi:\pk\to\Bbb{R}$ and $\psi\in L_{\nu}^2(\pk)$ then 
 $$P_{\tau}\tilde{\psi}(\lambda,x)=\int_{\pN}\mathcal{L}_f\psi(x) dm(f)$$
 \end{prop}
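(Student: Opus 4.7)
The plan is to verify the identity by computing the $L^2_\mu$-pairing of $P_\tau \tilde\psi$ against an arbitrary test function, using the defining property of the adjoint together with the fibered description of $\mu$ and the balance relation $f_0^*\mu_{\theta(\lambda)}=d^k \mu_\lambda$ that is encoded in the transfer operator.

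First, observe that by (\ref{til}) and $\nu=\pi_*\mu=\int_\Omega \mu_\lambda\,d\p(\lambda)$, the assumption $\psi\in L^2_\nu(\pk)$ gives $\tilde\psi\in L^2_\mu(X)$, so $P_\tau\tilde\psi$ is well defined. For any $\varphi\in L^2_\mu(X)$,
\[
\langle P_\tau\tilde\psi,\varphi\rangle_{L^2_\mu(X)}
=\langle \tilde\psi,\varphi\circ\tau\rangle_{L^2_\mu(X)}
=\int_\Omega\int_{\pk}\psi(x)\,\varphi(\theta(\lambda),f_0(x))\,d\mu_\lambda(x)\,d\p(\lambda),
\]
where $f_0=X_0(\lambda)$.

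Next I would exploit the identity $f^*\mu_{\theta(\lambda)}=d^k\mu_\lambda$ combined with the pointwise definition of $\mathcal L_f$ to obtain, for each fixed $\lambda\in\mathscr A$ and each bounded measurable $h$ on $\pk$,
\[
\int_{\pk}\psi(x)\,h(f_0(x))\,d\mu_\lambda(x)
=d^{-k}\int_{\pk}\psi(x)\,h(f_0(x))\,d(f_0^*\mu_{\theta(\lambda)})(x)
=\int_{\pk}h(y)\,(\mathcal L_{f_0}\psi)(y)\,d\mu_{\theta(\lambda)}(y),
\]
which is the standard projection/transfer-operator formula on fibers. Applying this fiberwise with $h(y)=\varphi(\theta(\lambda),y)$ converts the previous display into
\[
\int_\Omega\int_{\pk}\varphi(\theta(\lambda),y)\,(\mathcal L_{f_0}\psi)(y)\,d\mu_{\theta(\lambda)}(y)\,d\p(\lambda).
\]

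Now I would use that $\p$ is the product measure $m^{\otimes\mathbb N}$: writing $\lambda=(f_0,\lambda')$ with $\lambda'=\theta(\lambda)$, the variables $X_0$ and $\theta$ are independent with laws $m$ and $\p$ respectively, so Fubini gives
\[
\int_{\pN}\!\!\int_\Omega\!\!\int_{\pk}\varphi(\lambda',y)\,(\mathcal L_{f}\psi)(y)\,d\mu_{\lambda'}(y)\,d\p(\lambda')\,dm(f)
=\int_X\varphi(\lambda',y)\,\Bigl[\int_{\pN}\mathcal L_f\psi(y)\,dm(f)\Bigr]\,d\mu(\lambda',y).
\]
Since this equals $\langle P_\tau\tilde\psi,\varphi\rangle_{L^2_\mu(X)}$ for every $\varphi\in L^2_\mu(X)$, the claimed formula follows.

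The main subtlety is the transfer-operator identity on each fiber; once the balance relation $f_0^*\mu_{\theta(\lambda)}=d^k\mu_\lambda$ is invoked it is essentially formal, but one must check the $L^2$ integrability of $\int_{\pN}\mathcal L_f\psi\,dm(f)$ against $\mu$, which follows from Jensen and the fact that each $\mathcal L_f$ is an $L^2$-contraction from $L^2_{\mu_{\theta(\lambda)}}$ to $L^2_{\mu_\lambda}$ together with the integral representation of $\nu$. The rest is bookkeeping with Fubini and the product structure of $\p$.
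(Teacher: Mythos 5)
Your proof is correct and follows essentially the same route as the paper: pairing against a test function $\varphi\in L^2_\mu(X)$, using the balance relation $f_0^*\mu_{\theta(\lambda)}=d^k\mu_\lambda$ to pass to the transfer operator on each fiber, and then invoking Fubini with the product structure of $\p$ and $\theta$-invariance. The only cosmetic difference is that you expand $\langle P_\tau\tilde\psi,\varphi\rangle$ while the paper expands $\langle U_\tau\varphi,\tilde\psi\rangle$; these coincide for real-valued functions, so the arguments are identical.
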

 \begin{proof}
 Let $\varphi\in L^2_{\mu}(X).$ We denote $\lambda':=\theta(\lambda).$ Then by (\ref{inv}) and using the fact that $\p$ is the product measure and by Fubini's theorem we obtain
 \begin{eqnarray*}
 \la U_{\tau}\varphi,\tilde{\psi}\ra & = & \int_{\Omega}\la\mu_{\lambda}(dx), \varphi_{\theta(\lambda)}(f_0x) \psi(x) \ra d\p(\lambda) \\
 & = & \int_{\Omega} \la d^{-k}(f_0)^*\mu_{\theta(\lambda)}(dx),\varphi_{\theta(\lambda)}(f_0x)\psi(x)\ra d\p(\lambda) \\
 & = & \int_{\Omega}\int_{\pN}\la \mu_{\lambda'}(dx),\varphi_{\lambda'}\mathcal{L}_f\psi(x)\ra dm(f) d\p(\lambda')\\
& = & \int_{\Omega}\la \mu_{\lambda'},\varphi_{\lambda'}\int_{\pN}\mathcal{L}_f\psi dm(f)\ra d\p(\lambda')\\
& = & \la\varphi,P_{\tau}\tilde{\psi}\ra
 \end{eqnarray*}
where the third line follows from the invariance of $\p$ under $\theta$. 
 \end{proof}
For $\varphi,\psi\in L^2_{\mu}(X)$ we define the correlation function by 
$$C_n(\varphi,\psi):=\la\mu,\varphi\circ \tau^n\psi\ra-\la\mu,\varphi\ra\la\mu,\psi\ra.$$
Note that the dynamical system $(X,\mathscr{B},\mu,\tau)$ is mixing if $C_n(\varphi,\psi)\to0$ as $n\to \infty$ for every $\varphi,\psi\in L^2_{\mu}(X).$ Next, we prove that $C_n$ decays exponentially fast for dsh observables. 
\begin{proof}[Proof of Theorem \ref{mixx}]
Again by the invariance properties (\ref{inv}) without lost of generality we may assume that $\la\mu,\tilde{\psi}\ra=\int_{\Omega}\la\mu_{\lambda},\psi\ra d\p(\lambda)=0.$ Since $\psi$ is real valued we have
\begin{equation*}
C_n(\varphi,\tilde{\psi})=  \la \mu, \varphi P_{\tau}^n(\tilde{\psi})\ra 
\end{equation*} 
hence, we need to bound the quantity $|\la\mu,\varphi P_{\tau}(\tilde{\psi})\ra|.$\\ \indent
Now, denote by $\mathcal{L}_n:=\mathcal{L}_{f_n}$ by a straightforward calculation and $\la\mu,\tilde{\psi}\ra=0$ we see that 
 \begin{eqnarray*}
 P_{\tau}^n\tilde{\psi}(\lambda,x) & = & \int_{\Omega} \mathcal{L}_{n-1} \circ \dots \circ \mathcal{L}_0 \psi(x) d\p(\lambda)\\
 & = & \int_{\Omega} (\mathcal{L}_{n-1} \circ \dots \circ \mathcal{L}_0 \psi(x)-\la \mu_{\lambda},\psi\ra) d\p(\lambda)\
 \end{eqnarray*}

 On the other hand, since supp$(m)\subset\Hd$ the function $\log dist(\cdot,\mathcal{M})$ is bounded. Then by Remark \ref{uniform}, Corollary \ref{mod}, Lemma \ref{compact} and Lemma \ref{dsh} for $\p$-a.e. $\lambda\in \Omega$ we have
$$\|\mathcal{L}_{n-1} \circ \dots \circ \mathcal{L}_0 \psi-\la\mu_{\lambda},\psi\ra\|_{L^2_{\mu_{\theta^n(\lambda)}}}\leq Cd^{-n}\|\psi\|_{DSH} $$
where $C>0$ does not depend on $\lambda$ or $n.$ Then by H\"{o}lder's inequality, (\ref{til}) and from above argument we infer that 
\begin{eqnarray*}
|C_n(\varphi,\tilde{\psi})| & = & |\la \mu,\varphi P_{\tau}^n(\tilde{\psi})\ra|\\
& \leq & \|\varphi\|_{L^2_{\mu}} \|P_{\tau}^n(\tilde{\psi})\|_{L^2_{\mu}} \\
& \leq &  \|\varphi\|_{L^2_{\mu}} (\int_{\Omega} \|P_{\tau}^n(\tilde{\psi})\|^2_{L^2_{\mu_{\lambda}}} d\p(\lambda))^{\frac12} \\
& = &  \|\varphi\|_{L^2_{\mu}} (\int_{\Omega} \|P_{\tau}^n(\tilde{\psi})\|^2_{L^2_{\mu_{\theta^n(\lambda)}}} d\p(\lambda))^{\frac12} \\
& \leq &  C d^{-n}\|\varphi\|_{L^2_{\mu}}\|\psi\|_{DSH}
\end{eqnarray*}
where the forth equality follows from $\theta_*\p=\p$ and $P^n_{\tau}\tilde{\psi}$ does not depend on $\lambda$.

 \end{proof} 
\begin{rem}
Note that we can also obtain exponential decay of correlations for H\"{o}lder continuous functions by using Lemma \ref{hholder} and applying the above argument. 
\end{rem}

\section{ Stochastic Properties of $(X,\mathscr{B},\mu,\tau)$}
 
 \subsection{Central Limit Theorem} 
 In this section we prove a Central Limit theorem (CLT) for d.s.h and H\"{o}lder continuous observables. Our proof relies on verifying Gordin's condition.\\ 
 
 {\bf{Gordin's Method:}} Let $(X,\mathscr{F},T,\alpha)$ be an ergodic dynamical system. We let $$U:L^2_{\alpha}(X)\to L^2_{\alpha}(X)$$ $$\phi\to\phi\circ T$$denote the unitary operator induced by $T$ and let $P:=U^*$ be its adjoint operator. We denote the $\sigma$-algebra $\mathscr{F}_n:=T^{-n}(\mathscr{F})$ and let $E(\cdot|\mathscr{F}_n)$ be the associated conditional expectation. Recall that $E(\phi|\mathscr{F}_n)$ is the orthogonal projection of $\phi\in L^2_{\alpha}(X)$ onto closed subspace of $\mathscr{F}_n$ measurable functions in $L^2_{\alpha}(X).$ Then it follows from an easy calculation that for $n\geq0$
$$\|E(\phi|\mathscr{F}_n)\|_{L^2_{\alpha}}=\|P^n\phi\|_{L^2_{\alpha}}\ \text{and}\ E(\phi|\mathscr{F}_n)=U^nP^n\phi$$ almost everywhere with respect to $\alpha$ restricted to $\mathscr{F}_n.$ We say that $\psi\in L^2_{\alpha}(X)$ is a \textit{coboundary} if $\psi=u\circ T-u$ for some $u\in L^2_{\alpha}(X).$ In the sequel, we let $\mathcal{N}(0,\sigma)$ denote the normal distribution with mean zero and variance $\sigma>0$.

\begin{thm}\cite{Gordin}

Let $\phi\in L^2_{\alpha}(X)$ be such that $\la\alpha,\phi\ra=0.$ Assume that
\begin{equation}\label{dual}
\sum_{n\geq0}\|P^n\phi\|_{L^2_{\alpha}}<\infty
\end{equation}
then the non-negative real number $\sigma$ defined by 
$$\sigma^2=\lim_{N\to \infty}\frac{1}{N}\int_X( \sum_{n=0}^{N-1}\phi\circ T^n)^2d\alpha$$
is a finite number. Moreover, $\sigma>0$ if and only if $\phi$ is not a coboundary. In this case,
$$\frac{1}{\sqrt{N}}\sum_{n=0}^{N-1}\phi\circ T^n \Rightarrow \mathcal{N}(0,\sigma).$$ 
as $N\to \infty.$
\end{thm}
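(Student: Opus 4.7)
The plan is to carry out Gordin's classical martingale approximation argument: the summability condition \eqref{dual} is designed precisely to produce a function $h \in L^2_\alpha(X)$ so that $\phi$ decomposes as a stationary reverse martingale difference plus a telescoping term, reducing the CLT for $\phi$ to the standard CLT for stationary ergodic martingale differences in $L^2$.

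First I would set $h := \sum_{n=1}^{\infty}P^n\phi$, which converges absolutely in $L^2_\alpha(X)$ by \eqref{dual}. A quick computation gives $Ph = h - P\phi$, and defining
$$
\psi := \phi + h - Uh
$$
one checks, using $PU = \mathrm{id}$,
$$
P\psi \;=\; P\phi + Ph - h \;=\; P\phi + (h - P\phi) - h \;=\; 0.
$$
Hence $\psi$ is orthogonal to the image of $U$, i.e.\ to $L^2_\alpha(\mathscr{F}_1)$, and applying $U^n$ shows $\psi\circ T^n \in L^2_\alpha(\mathscr{F}_n)\ominus L^2_\alpha(\mathscr{F}_{n+1})$; these subspaces are pairwise orthogonal. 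Thus $\{\psi\circ T^n\}_{n\ge 0}$ is a stationary ergodic reverse martingale difference sequence. Telescoping yields
$$
S_N := \sum_{n=0}^{N-1}\phi\circ T^n \;=\; \sum_{n=0}^{N-1}\psi\circ T^n \;+\; h\circ T^N - h,
$$
with the boundary term bounded in $L^2_\alpha$ by $2\|h\|_{L^2_\alpha}$ and hence negligible after dividing by $\sqrt{N}$.

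The classical CLT for stationary ergodic martingale differences in $L^2$ (Billingsley, Ibragimov) then gives
$$
\frac{1}{\sqrt{N}}\sum_{n=0}^{N-1}\psi\circ T^n \;\Rightarrow\; \mathcal{N}(0,\|\psi\|_{L^2_\alpha}),
$$
and combining with the boundary estimate (Slutsky) transfers this to $N^{-1/2}S_N$. Orthogonality gives $\|\sum_{n<N}\psi\circ T^n\|^2_{L^2_\alpha} = N\|\psi\|^2_{L^2_\alpha}$, and a Cauchy--Schwarz bound on the cross term of $\|S_N\|^2_{L^2_\alpha}$ shows $\sigma^2 = \|\psi\|^2_{L^2_\alpha}$, so $\sigma$ is a well-defined finite number equal to the stated limit. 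For the coboundary characterization, $\sigma = 0$ iff $\psi = 0$ iff $\phi = Uh - h$ (a coboundary with the explicit $h$ built above); conversely, if $\phi = Uu - u$ for some $u\in L^2_\alpha$, telescoping the tail $\sum_{n\ge 1}P^n\phi$ together with ergodicity forces $\psi = 0$. The sole genuine obstacle is invoking the martingale CLT for the auxiliary sequence; once $h$ and $\psi$ are in hand the remainder is ergodic-theoretic bookkeeping.
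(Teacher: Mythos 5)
The paper does not prove this statement: it is quoted directly from the reference \cite{Gordin} as a black-box ingredient for establishing Theorems \ref{CLT} and \ref{CLTM}, so there is no internal argument to compare yours against. What you have written is the standard Gordin--Liverani martingale coboundary decomposition, and the core of it is correct: the summability of $\|P^n\phi\|_{L^2_\alpha}$ lets you build $h=\sum_{n\ge1}P^n\phi$ in $L^2_\alpha$, the correction $\psi=\phi+h-Uh$ satisfies $P\psi=0$, the shifts $\psi\circ T^n$ are orthogonal across the decreasing filtration $\mathscr{F}_n=T^{-n}\mathscr{F}$, telescoping produces $S_N=\sum_{n<N}\psi\circ T^n+h\circ T^N-h$ with a boundary term that is $o(\sqrt N)$, and the ergodic stationary martingale CLT of Billingsley--Ibragimov finishes the job. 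The variance identification $\sigma^2=\|\psi\|^2_{L^2_\alpha}$ via orthogonality and Cauchy--Schwarz is also fine.

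Two small points worth tightening. First, for the converse of the coboundary characterization you appeal to ``telescoping the tail together with ergodicity,'' which does go through (one shows $P^Nu$ converges to a $T$-invariant limit, hence a constant, forcing $\psi=0$), but a more economical route is simply to observe that if $\phi=u\circ T-u$ then $S_N=u\circ T^N-u$ is bounded in $L^2_\alpha$, so $\sigma^2=\lim N^{-1}\|S_N\|^2_{L^2_\alpha}=0$ and therefore $\|\psi\|^2_{L^2_\alpha}=\sigma^2=0$. Second, to apply the stationary martingale CLT you should make explicit that ergodicity of $(X,\mathscr{F},T,\alpha)$ is what guarantees the conditional variance $E(\psi^2\mid\mathscr{F}_1)$ averages to the constant $\|\psi\|^2_{L^2_\alpha}$ (the Lindeberg condition being automatic for a stationary $L^2$ difference array); you invoke ergodicity implicitly but it is the load-bearing hypothesis at that step. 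Neither remark changes the verdict: the argument is the intended one and is sound.
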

 
 \begin{proof}[Proof of Theorem \ref{CLT}]
 We will verify condition (\ref{dual}). To this end, it is enough to show that 
 $$\sum_{j\geq0}\| P_{\tau}^j\tilde{\psi}\|_{L^2_{\mu}} <\infty $$
where $P_{\tau}$ is as defined in section \ref{expo}. Then by Proposition \ref{adjoint} we have 
$$P_{\tau}^j\tilde{\psi}(x) = \int_{\Omega} \mathcal{L}_{j-1} \circ \dots \circ \mathcal{L}_0 \psi(x) d\p(\lambda).$$
 Now, since $\la\mu,\tilde{\psi}\ra=0$ the argument in the proof of Theorem \ref{mixx} yields
 \begin{eqnarray*}
 \|P_{\tau}^j\tilde{\psi}\|_{L^2_{\mu}} & \leq &   Cd^{-j} \|\psi\|_{DSH}
 \end{eqnarray*}
 thus, the assertion follows.
 \end{proof}
\begin{rem}
Note that applying the same reasoning and using Lemma \ref{hholder}, one can obtain CLT for H\"{older} continuous functions. 
\end{rem}

 \section{A Markov chain associated with random pre-images} \label{markov}
 In this section we introduce a Markov chain associated with pre-images of random holomorphic maps. We use the same notation as in previous sections. We consider $(\pk,\mathcal{B}, \nu)$ as a probability space where $\mathcal{B}$ denotes the Borel algebra, $\nu:=\pi_*\mu$ and $\pi:\Omega\times\pk \to \pk$ is the projection on the second factor. We let $\mathcal{L}_f$ denote the Perron-Frobenius operator associated with $f\in\Hd,$ precisely  
$$\mathcal{L}_f(\phi)(x)=d^{-k}\sum_{\{y:f(y)=x\}}\phi(y)$$ for $\phi\in L^2_{\nu}(\pk).$  We define the transition probability by 
$$P:\pk\times \mathcal{B}\to [0,1]$$
\begin{eqnarray*} \label{trans}
P(x,G): & = &   \int_{\pN}\mathcal{L}_f(\chi_G)(x)dm(f)\\
& = & \int_{\pN}d^{-k} \sum_{y\in f^{-1}(x)}\delta_y(G) dm(f) 
\end{eqnarray*} 
where $\delta_y$ denotes the Dirac mass at $y$ and $\chi_G$ denotes the indicator function of $G.$ First, we observe that $P(x,G)$ is well-defined. To this end it is enough to show that for fixed $G\in \mathcal{B}$ and $x\in\pk$ the map
$$\Hd\to [0,1]$$
$$f\to \mathcal{L}_f(\chi_G)(x)$$ is measurable. 
This follows from noting that for fixed $x\in \pk$ as $f$ varies in $\Hd$ the solutions $y\in \pk$ such that $f(y)=x$ vary continuously.  The same reasoning shows that $x \to P(x,G)$ is a measurable map for every Borel set $G.$ Moreover, $G\to P(x,G)$ defines a probability on $\pk.$
Thus, we may define the Markov operator on non-negative measurable functions by
\begin{eqnarray*}
P\phi(x) & := & \int_{\pk}\phi(y)P(x,dy)\\
& = &  \int_{\pN}\mathcal{L}_f\phi(x)dm(f)
\end{eqnarray*}
which is again a non-negative measurable function. The following is a direct consequence of Theorem \ref{main} and Theorem \ref{mixx}:
\begin{prop}\label{station}
The measure $\nu$ is an $P$-invariant ergodic measure.
\end{prop}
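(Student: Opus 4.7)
The plan is to transfer both the invariance and ergodicity of $\nu$ back to the already-established properties of $(X,\mathscr{B},\mu,\tau)$ via the identity obtained in Proposition \ref{adjoint}. The crucial observation is that for $\psi\in L^2_\nu(\p^k)$, the function $P_\tau\tilde\psi$ does not depend on $\lambda$, so Proposition \ref{adjoint} can be rewritten as
\[
P_\tau\tilde\psi \;=\; \widetilde{P\psi},
\]
where, as usual, $\tilde\psi=\psi\circ\pi$. This together with $\nu=\pi_*\mu$ gives $\langle\nu,\phi\rangle=\langle\mu,\tilde\phi\rangle$ for any Borel $\phi:\p^k\to\mathbb R$.

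For $P$-invariance, since $\mu$ is $\tau$-invariant the dual operator $P_\tau$ preserves the integral against $\mu$: taking $\phi=1$ in $\langle P_\tau\psi,\phi\rangle=\langle\psi,\phi\circ\tau\rangle$ gives $\int_X P_\tau\psi\,d\mu=\int_X\psi\,d\mu$. Therefore
\[
\int_{\p^k}P\psi\,d\nu \;=\; \int_X \widetilde{P\psi}\,d\mu \;=\; \int_X P_\tau\tilde\psi\,d\mu \;=\; \int_X \tilde\psi\,d\mu \;=\; \int_{\p^k}\psi\,d\nu,
\]
proving $P$-invariance.

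For ergodicity, I would argue at the level of fixed points of $P$ in $L^2_\nu(\p^k)$. Suppose $\phi\in L^2_\nu(\p^k)$ satisfies $P\phi=\phi$ $\nu$-a.e. Then $P_\tau\tilde\phi=\widetilde{P\phi}=\tilde\phi$ $\mu$-a.e., so
\[
\|\tilde\phi\|_{L^2_\mu}^2 \;=\; \langle P_\tau\tilde\phi,\tilde\phi\rangle_{L^2_\mu} \;=\; \langle\tilde\phi,U_\tau\tilde\phi\rangle_{L^2_\mu}.
\]
Cauchy--Schwarz, combined with $\|U_\tau\tilde\phi\|_{L^2_\mu}=\|\tilde\phi\|_{L^2_\mu}$, forces the equality case, and since $\tilde\phi$ is real-valued this yields $U_\tau\tilde\phi=\tilde\phi$, i.e.\ $\tilde\phi\circ\tau=\tilde\phi$ $\mu$-a.e. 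Now invoke ergodicity of $(X,\mathscr{B},\mu,\tau)$, which was noted in Section \ref{dsh} as a consequence of the mixing of $(\Omega,\p,\theta)$ (independent of any support assumption on $m$). This forces $\tilde\phi$ to be constant $\mu$-a.e., which in turn forces $\phi$ to be constant $\nu$-a.e. Hence $\nu$ is ergodic for $P$.

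The only delicate point is the measurability of $P\psi$ and the legitimacy of applying Proposition \ref{adjoint} to the indicator test functions implicit in the ergodicity step; but these were handled in the discussion preceding the proposition (measurability of $f\mapsto \mathcal L_f(\chi_G)(x)$ and of $x\mapsto P(x,G)$), so no real obstacle remains. The main conceptual move is simply that $P$ on $(\p^k,\nu)$ is the image, under $\pi$, of the adjoint Koopman operator $P_\tau$ on $(X,\mu)$; once this is recognized, invariance is immediate and ergodicity is transported by the standard Cauchy--Schwarz trick for isometries.
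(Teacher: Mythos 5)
Your proof is correct, and the ergodicity step takes a genuinely different (and arguably cleaner) route from the paper's. For $P$-invariance both arguments ultimately rest on the same facts---the push-forward relation $(f_0)_*\mu_\lambda=\mu_{\theta(\lambda)}$ and Fubini---but you package them through the identity $P_\tau\tilde\psi=\widetilde{P\psi}$ of Proposition \ref{adjoint} and the fact that $\tau$-invariance of $\mu$ forces $\int P_\tau\psi\,d\mu=\int\psi\,d\mu$, whereas the paper unwinds the same computation directly; this is a presentational rather than substantive difference. For ergodicity the divergence is real: the paper cites the exponential mixing estimate of Theorem \ref{mixx} (which carries the hypothesis $\operatorname{supp}(m)\subset\mathcal H_d$), while you instead use the Cauchy--Schwarz equality case together with the isometry of $U_\tau$ to upgrade $P_\tau\tilde\phi=\tilde\phi$ to $U_\tau\tilde\phi=\tilde\phi$, and then appeal only to the classical ergodicity of $(X,\mathscr B,\mu,\tau)$ recorded in Section \ref{dsh}. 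Your route is thus more elementary and needs a weaker input (ergodicity of the skew product rather than its quantitative mixing), which makes the conclusion somewhat more robust; the paper's route has the virtue of reusing machinery it has already built. One minor point worth making explicit if you were to polish this: the equality case of Cauchy--Schwarz gives $U_\tau\tilde\phi=c\tilde\phi$ for some scalar $c$, and you should note that $\langle\tilde\phi,U_\tau\tilde\phi\rangle=\|\tilde\phi\|^2$ together with $\|U_\tau\tilde\phi\|=\|\tilde\phi\|$ pins $c=1$---you gesture at this but it deserves a sentence.
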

\begin{proof}
To prove invariance, we need to show that for every bounded measurable function $\phi$ on $\pk$ we have $$\la\nu,P\phi\ra=\la\nu,\phi\ra.$$
We denote $\lambda:=\theta(\lambda').$ Then by definition of $\nu$ and Fubini's theorem we have 
\begin{eqnarray*}
\la\nu,P\phi\ra & = & \la\mu,P\phi\circ \pi\ra \\
& = & \int_{\Omega}\la\mu_{\lambda},P\phi\ra d\p(\lambda) \\
& = & \int_{\Omega}\int_{\pN} \la\mu_{\lambda},\mathcal{L}_f\phi\ra dm(f) d\p(\lambda) \\
& = & \int_{\Omega} \la\mu_{\theta(\lambda')},\phi\ra d\p(\lambda')\\
& = & \la \mu, \phi\circ \pi\ra\\
& = & \la \nu,\phi\ra.
\end{eqnarray*}
To prove ergodicity of $\nu$ we need to show that for every bounded measurable function $\phi$ on $\pk,$ $P\phi=\phi$ implies that $\phi$ is constant $\nu$-a.e. equivalently $\phi\circ \pi$ is constant $\mu$-a.e. This follows from Proposition \ref{adjoint} and the strong mixing property proved in Theorem \ref{mixx}. 
\end{proof}

Let $Y,\p_{\nu},(Z_n)_{n\geq0}$ and $\vartheta:Y\to Y$ be as defined in the introduction. It follows from Proposition \ref{station} that $\p_{\nu}$ is invariant and ergodic with respect to the shift $\vartheta$ hence, $(Z_n)_{n\geq 0}$ is stationary under $\p_{\nu}.$ Thus, by  Birkhoff's ergodic theorem, for every $\phi\in L^1_{\nu}(\pk)$ the series  
$$\frac{1}{N}\sum_{n=0}^{N-1} \phi(Z_n(y))=\frac{1}{N}\sum_{n=0}^{N-1}\phi(Z_0\circ \vartheta^n(y))\ \text{converges to}\ \la\nu,\phi\ra$$ as $N\to \infty$ for $\p_{\nu}$-a.e. $y\in Y.$ We say that $\phi$ \textit{satisfies Central Limit Theorem} (CLT) for the Markov chain $(Z_n)_{n\geq0}$ if 
$\frac{1}{\sqrt{N}}\sum_{n=0}^{N-1} \phi(Z_n)$ converges in law under the invariant measure $\Bbb{P}_{\nu}$ to the normal distribution $\mathcal{N}(0,\sigma)$ for some $\sigma>0.$    
The following result is a consequence of \cite{GL}:
\begin{thm}\label{GL}
 If  $\psi=g-Pg$ for some $g\in L^2_{\nu}(\pk)$ then $$\frac{1}{\sqrt{N}}\sum_{n=0}^{N-1}\psi(Z_n)\Rightarrow \mathcal{N}(0,\sigma^2)$$ where
$\sigma^2=\int_{\pk}g^2-(Pg)^2d\nu.$ Moreover, if $\sigma=0$ then the partial sum converges to Dirac mass at 0. 
\end{thm}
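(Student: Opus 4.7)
The plan is to apply the classical Gordin--Lifshitz martingale method for Markov chains: rewrite the partial sum as a stationary ergodic martingale plus asymptotically negligible boundary terms, and then invoke the martingale CLT. Let $\mathcal{F}_k:=\sigma(Z_0,\dots,Z_k)$ denote the natural filtration of $(Z_n)$ under $\p_{\nu}.$ I set $\xi_k:=g(Z_k)-Pg(Z_{k-1})$ for $k\geq 1.$ The defining property of the Markov operator $P$ together with the Markov property gives $E_{\p_{\nu}}[g(Z_k)\mid\mathcal{F}_{k-1}]=Pg(Z_{k-1})$ almost surely, so $(\xi_k)_{k\geq 1}$ is a square-integrable martingale-difference sequence and $M_N:=\sum_{k=1}^N\xi_k$ is a martingale with respect to $(\mathcal{F}_k).$

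A direct telescoping yields
\[
M_N=\sum_{n=0}^{N-1}\psi(Z_n)+g(Z_N)-g(Z_0)=S_N+g(Z_N)-g(Z_0),
\]
where $S_N:=\sum_{n=0}^{N-1}\psi(Z_n)$ is the partial sum of interest. Since $g\in L^2_{\nu}(\pk)$ and $(Z_n)$ is stationary under $\p_{\nu},$ the boundary term $g(Z_N)-g(Z_0)$ is bounded in $L^2$ uniformly in $N,$ hence $(g(Z_N)-g(Z_0))/\sqrt{N}\to 0$ in probability. It therefore suffices to establish a CLT for $M_N/\sqrt{N}.$

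By Proposition \ref{station} the measure $\nu$ is $P$-invariant and ergodic, so $(Z_n)$ is stationary and ergodic under $\p_{\nu},$ which in turn makes $(\xi_k)$ a stationary ergodic martingale-difference sequence. A short computation using the Markov property and the $P$-invariance of $\nu$ gives
\[
E_{\p_{\nu}}[\xi_k^2]=\int_{\pk}g^2\,d\nu-\int_{\pk}(Pg)^2\,d\nu=\sigma^2.
\]
Billingsley's CLT for stationary ergodic $L^2$ martingale-difference sequences then delivers $M_N/\sqrt{N}\Rightarrow \mathcal{N}(0,\sigma^2)$ when $\sigma>0,$ and combined with the boundary estimate above one obtains the claimed CLT for $S_N/\sqrt{N}.$ In the degenerate case $\sigma=0,$ the zero-mean increment $\xi_k$ has zero variance and hence vanishes almost surely, so $M_N\equiv 0$ and $S_N=g(Z_0)-g(Z_N)$ is $L^2$-bounded; therefore $S_N/\sqrt{N}\to 0$ in $L^2$ and its law converges weakly to $\delta_0.$

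The main obstacle is not conceptual but organizational: this is essentially a classical Markov chain CLT, and all the nontrivial input from the dynamical machinery of the paper is concentrated in the stationarity and ergodicity of $(Z_n)$ under $\p_{\nu},$ which are supplied by Proposition \ref{station} (whose proof ultimately rests on the strong mixing of $(X,\mathscr{B},\mu,\tau)$ established in Theorem \ref{mixx}).
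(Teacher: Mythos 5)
Your proof is correct, and it reconstructs essentially the same argument the paper implicitly delegates to the citation \cite{GL}: the paper states the theorem as ``a consequence of \cite{GL}'' without supplying a proof, and \cite{GL} is precisely the classical Gordin--Lifshitz martingale decomposition for Markov chains that you carry out — set $\xi_k=g(Z_k)-Pg(Z_{k-1})$, verify it is a stationary ergodic martingale-difference sequence with variance $\sigma^2$, telescope to identify $S_N$ with the martingale up to the $L^2$-bounded boundary term $g(Z_0)-g(Z_N)$, and invoke Billingsley's martingale CLT. Your invocation of Proposition \ref{station} for the required stationarity and ergodicity of $(Z_n)$ under $\p_{\nu}$ is exactly the input the paper intends, and the $\sigma=0$ case is handled correctly by noting that the increments then vanish a.s.\ so $S_N/\sqrt{N}\to 0$ in $L^2$.
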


\begin{proof}[Proof of Theorem \ref{CLTM}]
Note that the hypothesis in the Theorem \ref{GL} is satisfied if $$ \sum_{j\geq0}\|P^j \psi \|_{L^2_{\nu}} < \infty.$$ Indeed, if $g:=\sum_{j\geq0}P^j\psi$ converges in $L^2_{\nu}(\pk)$ then $\psi=g-Pg.$\\
 Now, by $\la\nu,\psi\ra=0$ we have 
 \begin{eqnarray*}
P^j\psi(x) & = & \int_{\Omega}\mathcal{L}_{j-1}\circ\dots \circ \mathcal{L}_0(\psi)(x)d\p(\lambda)\\
&  = & \int_{\Omega}\mathcal{L}_{j-1}\circ\dots \circ \mathcal{L}_0(\psi)(x)-\la\mu_{\lambda},\psi\ra d\p(\lambda)
\end{eqnarray*}
Thus, by Remark \ref{uniform}, Corollary \ref{mod}, Lemma \ref{compact} and Lemma \ref{dsh} there exists $C>0$ such that 
$$\| \mathcal{L}_{j-1}\circ \dots \circ \mathcal{L}_1\circ \mathcal{L}_0(\psi)-\la\mu_{\lambda},\psi\ra\|_{L^2_{\mu_{\theta^j(\lambda)}}}\leq C d^{-j} \|\psi\|_{DSH}$$
for $\p$-a.e $\lambda\in \Omega.$ On the other hand by invariance property $\theta_*\p=\p$

\begin{eqnarray*}
\|P^j\psi\|^2_{L^2_{\nu}} & = & \int_{\Omega} \|P^j\psi\|^2_{L^2_{\mu_{\lambda}}}d\p(\lambda) \\
& = & \int_{\Omega} \|P^j\psi\|^2_{L^2_{\mu_{\theta^j(\lambda)}}}d\p(\lambda)\\
& \leq & Cd^{-j}\|\psi\|_{DSH}
\end{eqnarray*}
where the second line follows from $\theta_*\p=\p$ and $P^j\psi$ does not depend on $\lambda.$
\end{proof}

\begin{rem}
Applying the same argument and using Lemma \ref{hholder} one can obtain CLT for H\"{o}lder continuous observables.
\end{rem}


 \bibliographystyle{amsalpha}

\bibliography{biblio}
\end{document}